\newcommand\keywordsname{Key words}
\newcommand\AMSname{AMS subject classifications}
\newenvironment{@abssec}[1]{%
     \if@twocolumn
       \section*{#1}%
     \else
       \vspace{.05in}\footnotesize
       \parindent .0in
         {\upshape\bfseries #1. }\ignorespaces
     \fi}
     {\if@twocolumn\else\par\vspace{.1in}\fi}
\newenvironment{keywords}{\begin{@abssec}{\keywordsname}}{\end{@abssec}}
\newenvironment{AMS}{\begin{@abssec}{\AMSname}}{\end{@abssec}}
\newcommand{\tr}{\text{tr}}
\newcommand{\R}{\mathbb{R}}
\providecommand{\U}[1]{\protect\rule{.1in}{.1in}}
\newtheorem{theorem}{Theorem}
\newtheorem{condition}[theorem]{Condition}
\newtheorem{definition}[theorem]{Definition}
\newtheorem{example}{Example}
\newtheorem{lemma}[theorem]{Lemma}
\newtheorem{remark}[theorem]{Remark}
\numberwithin{equation}{section}
\numberwithin{example}{section}
\numberwithin{theorem}{section}
\newenvironment{proof}[1][Proof]{\noindent\textbf{#1.} }{\ \rule{0.5em}{0.5em}}
\begin{document}

\title{Particle exchange Monte Carlo methods for eigenfunction and related nonlinear problems\thanks{Division of Applied Mathematics, Brown University. Research supported
in part by the the Air Force Office of Scientific Research
(FA9550-21-1-0354).}}
\author{Paul Dupuis and Benjamin J. Zhang}
\maketitle

\begin{abstract}
We introduce and develop a novel particle exchange Monte Carlo method.
Whereas existing methods apply to eigenfunction problems where the eigenvalue is known
(e.g., integrals with respect to a Gibbs measure, which can be interpreted 
as corresponding to eigenvalue zero),
here the focus is on problems where the eigenvalue is not known \emph{a priori}.
To obtain an appropriate particle exchange rule we must consider a pair of processes,
with one evolving forward in time and the other backward.
Applications to eigenfunction problems corresponding to quasistationary distributions and ergodic stochastic control are discussed.
\end{abstract}

\begin{keywords} Monte Carlo, nonlinear Markov process, infinite swapping limit, interacting particle systems, eigenvalue problems, ergodic control, quasistationary distributions, replica exchange, Fleming-Viot process\end{keywords}

\begin{AMS}
    65C05, 65C35, 60K35, 65N25, 93E20
\end{AMS}

\section{Introduction}

Markov chain Monte Carlo is widely used for approximations in linear problems,
such as computing integrals with respect to a Gibbs measure. The method can be
effective even when a straightforward implementation suffers from the ``rare
event sampling problem,'' which means that different parts of the state space
over which integration takes place do not communicate well under the Markov
chain whose stationary distribution is the target measure. 
Here one can
make use of interacting particle methods such as parallel tempering to
mitigate the communication problem, by linking the chain responsible for
sampling from the target distribution with chains that sample from related
distributions which suffer less from rare event issues \cite{gey,swewan}.

A simple example is the problem where the target measure is
\[
\mu^{\varepsilon}(dx)=\frac{1}{Z_{\varepsilon}}e^{-\frac{2}{\varepsilon}%
V(x)}dx.
\]
Here $Z_{\varepsilon}$ is the normalization constant that makes the measure a
probability, $V:\mathbb{R}^{d}\rightarrow\mathbb{R}$ is some smooth potential
that tends to $\infty$ as $\left\Vert x\right\Vert \rightarrow\infty$, so as to
make $Z_{\varepsilon}$ finite, and $\varepsilon$ is a parameter proportional
to temperature in physical models. Let $DV(x)$ denote the gradient of $V$. One
easily checks that the diffusion
\begin{equation}
dX^{\varepsilon}=-DV(X^{\varepsilon})dt+\sqrt{\varepsilon}dW,\label{eqn:SDE}%
\end{equation}
where $W$ is a standard Brownian motion on $\R^d$, 
is reversible with respect to $\mu^{\varepsilon}$, and hence the empirical
measure $\frac{1}{T}\int_{0}^{T}\delta_{X_{t}^{\varepsilon}}(dx)dt$ converges
by the ergodic theorem to $\mu^{\varepsilon}(dx)$ as $T\rightarrow\infty$.
However, when $\varepsilon>0$ is small and $V$ has multiple local minima, it
takes $X_{t}^{\varepsilon}$ a long time to move between the local minima, and
so convergence is slow.

In the most common formulation of particle exchange Monte Carlo methods, temperature is used to identify the
dynamics of the other particle system. For example, if there are only two particles, one would
take $\tau>\varepsilon$ and dynamics $dX^{\tau}=-DV(X^{\tau})dt+\sqrt{2\tau
}d\bar{W}$, with the corresponding stationary distribution $\mu^{\tau}$, and
Brownian motion $\bar{W}$ independent of $W$. The distribution $\mu^{\tau}$ is
\textquotedblleft flatter,\textquotedblright\ and the higher diffusivity of
$X^{\tau}$ allows it to wander around the state space more easily. The
stationary distribution of the pair $(X^{\varepsilon},X^{\tau})$ is of course
the product measure. Then additional randomness is introduced by allowing the
particles to swap locations at random times. 
A Metropolis--type swap mechanism between the higher and lower temperature dynamics that preserves the invariant distribution is used. One intuitively expects that the higher temperature dynamics, linked to those of the lower temperature in this way, to speed up the convergence of the low temperature marginal of the empirical measure. This expectation is true, and rigorous statements regarding the
improvement can be made \cite{dupliupladol,dupwu2}.

We previously remarked that the problems to which such parallel tempering methods apply are \emph{linear}, and that the methods exploit a reversibility structure. By linear, we mean that the algorithms compute approximations to the solution to the eigenfunction problem
\begin{equation}
-\mathcal{L}^{\varepsilon,\ast}\rho^{\varepsilon}(x)=0\text{ for }%
x\in\mathbb{R}^{d}, \label{eqn:SD}%
\end{equation}
where $\mathcal{L}^{\varepsilon,\ast}$ is the adjoint of the generator of
$X^{\varepsilon}$, and $\rho^{\varepsilon}(x)$ is normalized to be a
probability density. Although an eigenfunction problem, it is linear since the
eigenvalue is known.

Among several goals of the the present paper, perhaps the most important is to
introduce a computational approach to a class of nonlinear problems that in
some sense generalizes parallel tempering. Although the class of nonlinear
problems that can be so treated is more general (as will be discussed later),
to connect most directly to the previous discussion we use as an example an
eigenfunction problem of the form%
\begin{equation}
-\mathcal{L}^{\varepsilon,\ast}\psi^{\varepsilon}(x)+c(x)\psi^{\varepsilon
}(x)=\lambda^{\varepsilon}\psi^{\varepsilon}(x)\text{ for }x\in\mathbb{R}^{d},
\label{eqn:QSD}%
\end{equation}
where $c:\mathbb{R}^{d}\rightarrow\lbrack0,\infty)$. Under suitable conditions, there will be a real-valued eigenvalue with the largest real part
among all eigenvalues, and the eigenfunction of interest is positive, unique
up to a multiplicative constant, and corresponds to this largest eigenvalue.
Our interest is to approximate $\lambda^{\varepsilon}$ and $\psi^{\varepsilon
}$ through Monte Carlo methods. 

The most straightforward stochastic interpretation for $(\psi^{\varepsilon
}(\cdot),\lambda^{\varepsilon})$ that might serve as a basis for Monte Carlo
is the following. Under appropriate conditions one has, for any initial
condition $X_{0}^{\varepsilon}$,
\begin{equation}
\psi^{\varepsilon}(x)dx=\lim_{T\rightarrow\infty}\frac{Ee^{-\int_{0}%
^{T}c(X_{t}^{\varepsilon})dt}1_{\{dx\}}(X_{T}^{\varepsilon})}{Ee^{-\int
_{0}^{T}c(X_{t}^{\varepsilon})dt}} \label{eqn:largeT}%
\end{equation}
and
\[
\lambda^{\varepsilon}=-\lim_{T\rightarrow\infty}\frac{1}{T}\log Ee^{-\int
_{0}^{T}c(X_{t}^{\varepsilon})dt}.
\]
One can interpret the discounting term $e^{-c(X_{t}^{\varepsilon})}$ as
killing of the process at rate $c(X_{t}^{\varepsilon})$, and thus
$\psi^{\varepsilon}(x)dx$ is the asymptotic (as $T\rightarrow\infty$) limit of
the distribution of $X_{T}^{\varepsilon}$, conditioned on not having been
killed by time $T$. Also, the eigenvalue is the asymptotic killing rate,
conditioned on survival. Although this is perhaps the simplest stochastic
representation, we will use as the starting point a representation in terms of
a closely related process that does not die out. In probability,
$\psi^{\varepsilon}(x)dx$ is called the \emph{quasi-stationary} distribution for
these dynamics with killing rate $c(x)$.\footnote{More precisely, this is
\textquotedblleft soft\textquotedblright\ killing, to be contrasted with the
case of \textquotedblleft hard\textquotedblright\ killing, in which the
process is killed the first time it exits a given set.}

Stochastic methods used to solve eigenfunction problems of this sort generally
suffer from the same difficulties as those used for stationary distributions
when there is a rare event sampling problem. Hence one may ask if there is
any analogue to parallel tempering for Monte Carlo schemes that might be used
for their approximation. It turns out that this is possible, and the key
insight needed to make this work is that one cannot use just the process
$X_{t}^{\varepsilon}~$and qualitatively similar processes, such as the higher
temperature variants used in parallel tempering's most common implementation.
Rather, there is a \textquotedblleft dual\textquotedblright\ dynamic based on
time reversal that must be first linked to the original dynamic, and one can
then work with the forward/backward pair to obtain a reversibility structure
and a consistent Metropolis type swapping rule. In fact, once one understands
how to swap the forward/backward pair one can then couple to other
forward/backward pairs that are related to the first one, for example by a
change in a temperature parameter. The main task of the paper is to elucidate
the structure of the forward/backward pair, and show how it is used in
developing the analogue of parallel tempering for nonlinear problems.
To illustrate the ideas we focus on a diffusion model.
However, the approach generalizes without difficulty,
and in particular can also be used for countable state and finite state jump Markov processes.

We conclude this introduction by mentioning various issues and objects that
will be needed for or which will appear naturally in the analysis. The role
that each plays is a topic that will need further investigation. However, as
noted the purpose of the present paper is to introduce and link the needed
ideas in just one context, and then outline the use of the methods for some
problems of interest. We should also 
make precise what it means to \textquotedblleft
solve\textquotedblright\ a problem, which can depend on context. One meaning will be the
accurate approximation of a given integral by Monte Carlo. Another meaning is
that samples generated by a sampling scheme have been combined with a machine learning method
(e.g., kernel or neural
net) to create an accurate approximation to an eigenfunction, or in 
control problems to generate an
approximation to the derivative of a cost potential, and hence an approximation
to the optimal control.

\begin{itemize}
\item When using parallel tempering to solve \eqref{eqn:SD} the starting point
is an ergodic Markov process with $\rho^{\varepsilon}(x)$ as its stationary
distribution. The starting point for (\ref{eqn:QSD}) is analogous, but this
time in terms of what is called a ``nonlinear Markov process'' (NLMP), and so
in Section \ref{sec:nonlinearMPswap} we introduce such processes and discuss a
second stochastic representation for \eqref{eqn:QSD}, by which the
eigenfunction is characterized as the stationary distribution of the nonlinear process.

\item For the NLMP, there is not a direct analogue of the structure that
allows linking, e.g., similar processes that differ only through different
temperature parameters. This can only be done after, as a first step, one has
linked the process with a dual process, which also has an interpretation as a
NLMP (though in special cases it might be an ordinary Markov process). This
connection is established in Section \ref{sec:swap}, where it is shown that a
Metropolis type rule can be constructed for the pair of processes that allows
them to swap locations without perturbing their respective stationary
distributions. It is important to note that this imposes restrictions on the
process models (or in other words the operators appearing in the eigenfunction
problems), analogous to reversibility of processes used in approximating Gibbs
measures through a Metropolis-type construction.

\item One can argue that according to various measures of convergence, the
Monte Carlo approximations generated by the pair are optimized when passing to
the infinite swapping (INS) limit. Section \ref{sec:infswap} derives this
limit, as well as an appropriate weighted empirical measure that replaces the
ordinary empirical measure used at the prelimit. Also included in this section
is a discussion on how the problem of sampling from a Gibbs distribution is
included as a special case.
Section \ref{sec:imppot} introduces the
\textquotedblleft implied potential\textquotedblright\ that describes the
energy landscape of the stationary distribution that is relevant for the INS
algorithm and contrasts it with the landscape of the original dynamics.

\item While the construction up to this point is correct theoretically, it
is not practical, in that NLMPs cannot be simulated directly. What is arguably
the most natural approximation to the NLMPs is through a Fleming-Viot (FV)
interacting particle model. Once this approximation has been made direct
simulation is possible. The connection between FV systems and problems from
analysis seems to have originated with \cite{burholingmar}, and their use as a
computational tool seems to have been first suggested and developed by D.
Villemonais (see, for example, \cite{vil2}). With the introduction of the FV
system there are now two types of interaction--that within a FV process, and
that which produces swaps between different particles in the two FV processes,
one approximating the NLMP forward dynamics and a second approximating the
different NLMP backward dynamics.

\item Since we consider in this paper diffusion processes, there is also a
need to discretize such processes in time. For reasons explained in Section
\ref{sec:pure_jump}, we opt for a pure jump Markov chain approximation rather
than an Euler-Maruyama type scheme.
\end{itemize}

An outline is as follows. Section 2 defines the pair of NLMPs needed for a
legitimate swapping rule, and also formulates the INS limit.
Section 2.4 provides an illustration of the improved communication properties due to the swapping.
Section 3 presents the approximation of the NLMPs via Fleming-Viot processes,
while Section 4 presents the discrete time approximation to the continuous time trajectories.
One of the motivating    applications is to ergodic stochastic control,
and Section 5 describes how samples from the distribution defined by an eigenfunction can be used to directly approximate the cost potential and hence the optimal control.
Section 6 presents numerical examples, and open problems and concluding remarks are given in Section 7. We summarize the main ideas of this paper and the connections between them in Figure~\ref{fig:conceptdiagram}. We encourage the reader to use the concept diagram as a guide when reading this paper. 
\begin{figure}
    \centering
    \includegraphics[width=0.88\linewidth]{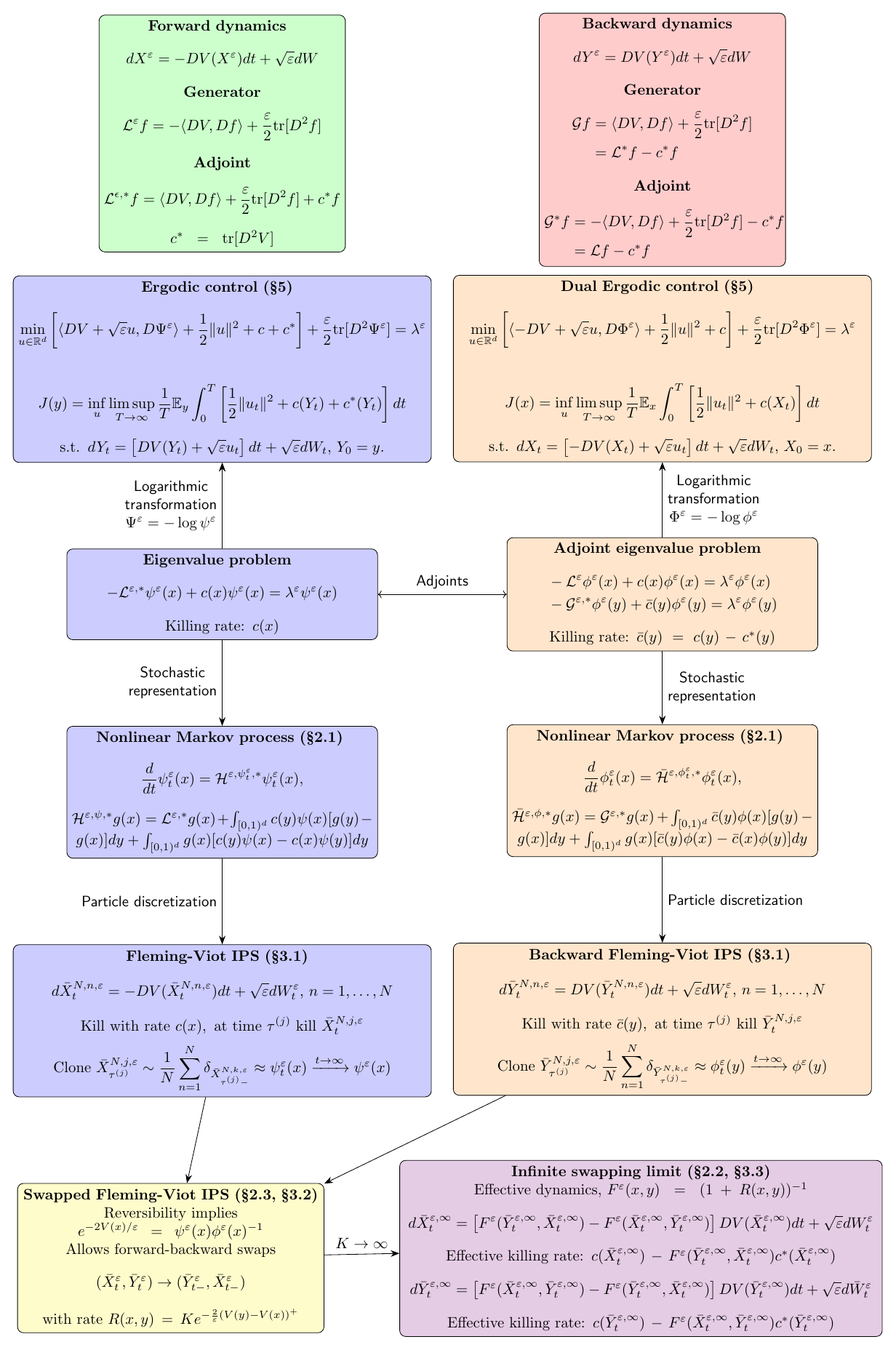}
    \caption{Concept diagram of the major ideas in this paper. }
    \label{fig:conceptdiagram}
\end{figure}

\section{Swapping between a forward--backward pair of nonlinear Markov
processes}

\label{sec:2}

\label{sec:nonlinearMPswap} In this section we formulate a
swapping mechanism to help with the Monte Carlo approximation of
(\ref{eqn:QSD}). As with (\ref{eqn:SD}), the approximation is through a large
time average. Unlike with (\ref{eqn:SD}), for which one can use a Markov
process model, for an equation of the form (\ref{eqn:QSD}) the starting point
is a \emph{nonlinear} Markov process (NLMP). It is not feasible to simulate the nonlinear Markov process exactly, but in Section~\ref{sec:fleming-viot}, we introduce the Fleming-Viot interacting particle system which provides an implementable approximation of the NLMP. 

The diffusion (\ref{eqn:SDE}) has the generator
\[
\mathcal{L}^{\varepsilon}f(x)=-\left\langle DV(x),Df(x)\right\rangle
+\frac{\varepsilon}{2}\text{tr}[D^{2}f(x)]
\]
for $f\in C_{c}^{2}(\mathbb{R}^{d})$, the space of twice continuously
differentiable functions from $\mathbb{R}^{d}$ to $\mathbb{R}$ with compact
support. Here $D^{2}f(x)$ denotes the Hessian. To avoid potential
nonuniqueness issues in the noncompact setting, we assume that $V$ is periodic
with $V(x)=V(x+e_{i})$ for any $x\in\mathbb{R}^{d}$ and any $i\in
\{1,\ldots,d\}$, where $e_{i}$ is the $i$th unit vector in the standard basis.
The formal adjoint is given by
\[
\mathcal{L}^{\varepsilon,\ast}f(x)=\left\langle DV(x),Df(x)\right\rangle
+\frac{\varepsilon}{2}\text{tr}[D^{2}f(x)]+c^{\ast}(x)f(x),
\]
where $c^{\ast}(x)=%
{\textstyle\sum\nolimits_{i=1}^{d}}
\partial^{2}V(x)/\partial^{2}x_{i}=\mbox{tr}[D^{2}V(x)]$. Under the
periodicity condition, it is known that there is a positive eigenvalue
$\lambda^{\varepsilon}$ with the largest real part among all eigenvalues, and
corresponding positive eigenfunction $\psi^{\varepsilon}$ that is unique up to
a multiplicative constant, that together solve (\ref{eqn:QSD}). When
normalized so that the integral over $[0,1]^{d}$ equals $1$, $\psi
^{\varepsilon}$ is the QSD mentioned in the Introduction, and $\lambda
^{\varepsilon}$ the asymptotic killing rate, conditioned on survival. To
simplify the discussion we assume that both $c(x)$ and $\bar{c}%
(x)=c(x)-c^{\ast}(x)$ are non-negative and continuous, but stress that these
conditions are not needed but used so we can consider for now stochastic
representations that involve only killing, rather than killing and cloning.

While we suppose that our primary interest is in the solution to
(\ref{eqn:QSD}), a second problem that will be relevant is
\begin{equation}
-\mathcal{L}^{\varepsilon}\phi^{\varepsilon}(x)+c(x)\phi^{\varepsilon
}(x)=\lambda^{\varepsilon}\phi^{\varepsilon}(x)\text{ for }x\in\mathbb{R}^{d}.
\label{eqn:QSDdual}%
\end{equation}
This problem clearly has the same structure as (\ref{eqn:QSD}), and its
solution identifies the QSD for the process
\begin{equation}
dY^{\varepsilon}=DV(Y^{\varepsilon})dt+\sqrt{\varepsilon}d\bar{W}
\label{eqn:SDEdual}%
\end{equation}
and killing rate $\bar{c}(x)=c(x)-c^{\ast}(x)$.\footnote{Another probabilistic
interpretation for $\phi^{\varepsilon}(x)$ is that $\phi^{\varepsilon}%
(x_{1})/\phi^{\varepsilon}(x_{1})$ gives a large $t$ approximation to ratio of
the probabilities of survival past $t$ for different starting points.} The
eigenvalue with largest real part is again $\lambda^{\varepsilon}$, and as a
QSD on $[0,1]^{d}$, $\phi^{\varepsilon}(x)$ is strictly positive. Note that
the dynamics of $Y^{\varepsilon}$ are the time reversal of the dynamics of
$X^{\varepsilon}$.

\begin{condition}
\label{con:reg}Each component of $DV(x)$ is $C^{1,\alpha}$ for some $\alpha
\in(0,1]$, and $c(x)$ is $C^{\alpha}$ for the same $\alpha$.
\end{condition}

We next state some standard results concerning the eigenfunctions
$\phi^{\varepsilon}$ and $\psi^{\varepsilon}$ under Condition \ref{con:reg}.
Let $\hat{p}_{t}^{\varepsilon}(x,y)$ denote the Green's function for
$\frac{\partial u}{\partial t}=\mathcal{L}^{\varepsilon}u-cu$, so that
\[
E_{x}e^{-\int_{0}^{t}c(X_{s}^{\varepsilon})ds}f(X_{t}^{\varepsilon}%
)=\int_{[0,1)^{d}}\hat{p}_{t}^{\varepsilon}(x,y)f(y)dy,
\]
and note that $\hat{p}_{t}^{\varepsilon}(x,y)dy$ is a sub-probability.

\begin{theorem}
Assume Condition \ref{con:reg}. Then there is $\lambda^{\varepsilon}%
\in(0,\infty)$ which is a simple eigenvalue of $-$ $[\mathcal{L}%
^{\varepsilon,\ast}-c]$, such that $\operatorname{Re}(\gamma)>$ $\lambda
^{\varepsilon}$ for any other eigenvalue $\gamma$, the associated
eigenfunction $\psi^{\varepsilon}$ is positive, there is no other eigenvalue
with a positive eigenfunction, and $\psi^{\varepsilon}$ is $C^{2,\alpha}$ for
some $\alpha\in(0,1]$. The analogous statement holds for $-$ $[\mathcal{L}%
^{\varepsilon}-\bar{c}]$ with the same eigenvalue and eigenfunction
$\phi^{\varepsilon}$. Lastly, there is $\delta>0$ such that if $\phi
^{\varepsilon}$ and $\psi^{\varepsilon}$ are normalized so that $\int
_{[0,1)^{d}}\psi^{\varepsilon}(x)dx=1$ and $\int_{[0,1)^{d}}\phi^{\varepsilon
}(x)\psi^{\varepsilon}(x)dx=1$, then for $t\geq1$
\[
\hat{p}_{t}^{\varepsilon}(x,y)=e^{-\lambda^{\varepsilon}t}\phi^{\varepsilon
}(x)\psi^{\varepsilon}(y)\left[  1+O(e^{-\delta t})\right]
\]
with $\delta$ independent of $x$ and $y$.
\end{theorem}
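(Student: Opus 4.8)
The plan is to realize $\hat p_t^\varepsilon(x,y)$ as the kernel of a compact, positivity-improving Feynman--Kac semigroup on the torus and then combine the Krein--Rutman (Perron--Frobenius--Jentzsch) theorem with parabolic regularity and a spectral-gap estimate. First I would use the $\mathbb{Z}^d$-periodicity of $V$ (hence of $DV$, $c^\ast$, and we take $c$ periodic as well) to project $X^\varepsilon$ to the flat torus $\mathbb{T}^d=[0,1)^d$ and work there throughout. On $C(\mathbb{T}^d)$ the operators $\hat P_t^\varepsilon f(x)=E_x[e^{-\int_0^t c(X_s^\varepsilon)\,ds}f(X_t^\varepsilon)]$ form a strongly continuous contraction semigroup (here $c\ge0$) with generator the closure of $\mathcal{L}^\varepsilon-c$, and the $L^2(dx)$-adjoint semigroup has generator $\mathcal{L}^{\varepsilon,\ast}-c$; because $X^\varepsilon$ is reversible with respect to $\rho^\varepsilon(dx)\propto e^{-2V(x)/\varepsilon}\,dx$, $\hat P_t^\varepsilon$ is moreover self-adjoint on $L^2(\rho^\varepsilon\,dx)$, which is not essential but is what ties $\psi^\varepsilon$ to $\phi^\varepsilon$ through the Gibbs weight. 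Under Condition~\ref{con:reg} the operator $\mathcal{L}^\varepsilon$ is uniformly elliptic with H\"older coefficients and $c\in C^\alpha$, so the parametrix construction / Aronson-type bounds and interior Schauder estimates give a jointly continuous kernel $\hat p_t^\varepsilon(x,y)$, $C^{2,\alpha}$ in each variable for $t>0$, and strictly positive for $t>0$ (e.g.\ from $\hat p_t^\varepsilon\ge e^{-\|c\|_\infty t}p_t^\varepsilon$ and strict positivity of the un-killed torus kernel, or a Harnack inequality). Continuity of the kernel on the compact torus makes $\hat P_t^\varepsilon$ a compact operator on $C(\mathbb{T}^d)$ for every $t>0$ (Arzel\`a--Ascoli), and its strict positivity makes it positivity improving; the same holds for $(\hat P_t^\varepsilon)^\ast$, whose kernel is the transpose.

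Next I would apply the strong Krein--Rutman theorem to the compact, positivity-improving operator $\hat P_1^\varepsilon$: its spectral radius, written $e^{-\lambda^\varepsilon}$, is an algebraically simple eigenvalue with a strictly positive eigenfunction $\phi^\varepsilon$, is the unique eigenvalue of that modulus, and is the only eigenvalue admitting a positive eigenfunction; the same applied to $(\hat P_1^\varepsilon)^\ast$ yields a strictly positive $\psi^\varepsilon$ with $\int_{\mathbb{T}^d}\phi^\varepsilon\psi^\varepsilon\,dx\neq0$. One has $\lambda^\varepsilon\in(0,\infty)$ because $\hat P_1^\varepsilon\mathbf 1=E_\cdot[e^{-\int_0^1 c(X_s^\varepsilon)\,ds}]<\mathbf 1$ pointwise (irreducibility of $X^\varepsilon$ on the torus together with $c\not\equiv0$, the latter being implicit in the assertion $\lambda^\varepsilon>0$), so the spectral radius is $<1$. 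By the semigroup identity $\phi^\varepsilon,\psi^\varepsilon$ are eigenfunctions of every $\hat P_t^\varepsilon$ (resp.\ its adjoint) with eigenvalue $e^{-\lambda^\varepsilon t}$; differentiating in $t$ and running an elliptic/parabolic Schauder bootstrap (also showing that the a priori eigenmeasure $\psi^\varepsilon$ has a $C^{2,\alpha}$ density, by hypoellipticity) upgrades them to classical $C^{2,\alpha}$ solutions of $-[\mathcal{L}^\varepsilon-c]\phi^\varepsilon=\lambda^\varepsilon\phi^\varepsilon$ and $-[\mathcal{L}^{\varepsilon,\ast}-c]\psi^\varepsilon=\lambda^\varepsilon\psi^\varepsilon$, i.e.\ \eqref{eqn:QSDdual} and \eqref{eqn:QSD}. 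The spectral mapping theorem for compact semigroups identifies the eigenvalues $\gamma$ of $-[\mathcal{L}^{\varepsilon,\ast}-c]$ (equivalently of $-[\mathcal{L}^\varepsilon-c]$) with $-\log$ of those of $(\hat P_1^\varepsilon)^\ast$, so simplicity and isolation of $e^{-\lambda^\varepsilon}$ give $\operatorname{Re}(\gamma)>\lambda^\varepsilon$ for every other eigenvalue; the ``analogous statement'' for the dual operator is precisely the adjoint half of the above, with $\psi^\varepsilon$ and $\phi^\varepsilon$ linked by $\rho^\varepsilon$ via the self-adjointness noted earlier.

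Finally I would derive the asymptotics. Normalizing $\int_{\mathbb{T}^d}\psi^\varepsilon\,dx=1$ and $\int_{\mathbb{T}^d}\phi^\varepsilon\psi^\varepsilon\,dx=1$, let $\Pi f=\phi^\varepsilon\int_{\mathbb{T}^d}f\psi^\varepsilon\,dx$ be the rank-one spectral projection, so $\hat P_t^\varepsilon=e^{-\lambda^\varepsilon t}\Pi+R_t$ with $R_t=\hat P_t^\varepsilon(I-\Pi)$ and $\Pi R_t=R_t\Pi=0$. Since the dominant eigenvalue of $\hat P_1^\varepsilon$ is simple and isolated and the remaining spectrum is compact, there is $\delta>0$ with the spectral radius of $\hat P_1^\varepsilon(I-\Pi)$ at most $e^{-(\lambda^\varepsilon+\delta)}$, hence $\|R_n\|\le Ce^{-(\lambda^\varepsilon+\delta)n}$, and interpolating with the contraction bound, $\|R_t\|\le C'e^{-(\lambda^\varepsilon+\delta)t}$ for $t\ge1$. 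Writing kernels, $\hat p_t^\varepsilon(x,y)=e^{-\lambda^\varepsilon t}\phi^\varepsilon(x)\psi^\varepsilon(y)+r_t(x,y)$; to make the remainder estimate uniform I would peel off half a unit of time at each end, $R_t=\hat P_{1/2}^\varepsilon R_{t-1}\hat P_{1/2}^\varepsilon$, and use that on the compact torus the positive continuous kernels satisfy $\hat p_{1/2}^\varepsilon(x,z)\le C_1\phi^\varepsilon(x)$ and $\hat p_{1/2}^\varepsilon(w,y)\le C_2\psi^\varepsilon(y)$ (bounded ratios of positive continuous functions), yielding $|r_t(x,y)|\le C_1C_2\phi^\varepsilon(x)\psi^\varepsilon(y)\int\!\int|r_{t-1}(z,w)|\,dz\,dw\le C''e^{-(\lambda^\varepsilon+\delta)t}\phi^\varepsilon(x)\psi^\varepsilon(y)$ for $t\ge1$; dividing by the strictly positive leading term gives $\hat p_t^\varepsilon(x,y)=e^{-\lambda^\varepsilon t}\phi^\varepsilon(x)\psi^\varepsilon(y)[1+O(e^{-\delta t})]$ with $\delta$ independent of $x,y$.

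The main obstacle, I expect, is the block consisting of strict positivity and joint continuity of the killed kernel (so that Krein--Rutman/Jentzsch applies and delivers \emph{algebraic} simplicity together with the absence of any other positive eigenfunction) and then the clean translation of the operator-level spectral gap for $\hat P_1^\varepsilon$ into the stated \emph{uniform}, product-form pointwise asymptotics for $\hat p_t^\varepsilon$. Since $\mathcal{L}^\varepsilon-c$ is not self-adjoint on $L^2(dx)$ one must go through Krein--Rutman rather than the spectral theorem; reversibility (self-adjointness on $L^2(\rho^\varepsilon\,dx)$) does give a shortcut, but I would keep the non-self-adjoint argument as it is what the more general operators treated later require.
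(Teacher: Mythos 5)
Your plan is correct, and it takes a genuinely different route from the paper's. The paper's ``proof'' of this theorem is a two-sentence pointer to the literature: the existence, simplicity, and positivity of the principal eigenvalue/eigenfunction are taken from Du's monograph on order structure and nonlinear elliptic theory (adapted from Dirichlet/Neumann/Robin to periodic boundary conditions), the $C^{2,\alpha}$ regularity from the same source, and the uniform Green's-function expansion $\hat p_t^\varepsilon = e^{-\lambda^\varepsilon t}\phi^\varepsilon\psi^\varepsilon[1+O(e^{-\delta t})]$ from Hartman. You instead give a self-contained semigroup argument: realize $\hat P_t^\varepsilon$ as a compact, positivity-improving semigroup on $C(\mathbb{T}^d)$ with a jointly continuous strictly positive kernel, apply Krein--Rutman/Jentzsch to $\hat P_1^\varepsilon$ and its adjoint, transfer the spectrum back via the spectral mapping theorem, bootstrap Schauder regularity, and obtain uniformity in the kernel expansion by sandwiching $R_t = \hat P_{1/2}^\varepsilon R_{t-1}\hat P_{1/2}^\varepsilon$ and using $\hat p_{1/2}^\varepsilon(x,\cdot)\lesssim\phi^\varepsilon(x)$, $\hat p_{1/2}^\varepsilon(\cdot,y)\lesssim\psi^\varepsilon(y)$ on the compact torus. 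The paper's approach is shorter because it leans on a ready-made theorem (and silently handles the periodic boundary adaptation); yours is longer but fully explicit, clearly separates which hypothesis is used where, and transparently shows where $c\not\equiv0$ enters to give $\lambda^\varepsilon>0$ -- a point the theorem statement leaves implicit and which you correctly flag. You also implicitly correct what appears to be a small slip in the statement: the dual operator should read $-[\mathcal{L}^\varepsilon-c]$ (matching \eqref{eqn:QSDdual}) rather than $-[\mathcal{L}^\varepsilon-\bar c]$, and that is exactly what you wrote.

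One small place to tighten: in your last inequality you bound $|r_t(x,y)|\le C_1C_2\phi^\varepsilon(x)\psi^\varepsilon(y)\iint|r_{t-1}|\,dz\,dw$ and then assert $\iint|r_{t-1}|\,dz\,dw\le C''e^{-(\lambda^\varepsilon+\delta)(t-1)}$. The latter does not follow immediately from the operator-norm bound $\|R_{t-1}\|_{C\to C}\lesssim e^{-(\lambda^\varepsilon+\delta)(t-1)}$, since the $L^1(dz\,dw)$ norm of a kernel is not dominated by the $C(\mathbb{T}^d)$-operator norm. The fix is your own trick applied once more: write $R_{t-1}=\hat P_{1/2}^\varepsilon R_{t-2}\hat P_{1/2}^\varepsilon$, use the boundedness of $\hat p_{1/2}^\varepsilon$ and the sub-probability property $\int\hat p_{1/2}^\varepsilon(z,u)\,du\le 1$ to get the pointwise bound $\sup_{z,w}|r_{t-1}(z,w)|\le\|\hat p_{1/2}^\varepsilon\|_\infty\,\|R_{t-2}\|_{C\to C}$, then integrate over the compact torus. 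This closes the loop and the rest of the argument goes through as you wrote it.
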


\begin{proof}
The claims regarding the existence of a principal eigenvalue and positive
eigenfunction can be found in \cite[Chapter 1]{du} and \cite[Chapter 3,
Theorem 10.1]{har7}. (Du does not consider the case of periodic boundary
condition, but the argument used for the Dirichlet, Neumann and Robin boundary
conditions applies with little change to that case.) The regularity is from
\cite[Chapter 2]{du}, while the expansion of the Green's function is also in
\cite[Chapter 3, Theorem 10.1]{har7}.
\end{proof}

\medskip As a consequence of the theorem, we see that given any starting
distribution $\nu^{\varepsilon}\in\mathcal{P}([0,1)^{d})$,
\[
\frac{E_{\nu^{\varepsilon}}e^{-\int_{0}^{t}c(X_{s}^{\varepsilon})ds}%
1_{\{dy\}}(X_{t}^{\varepsilon})}{E_{\nu^{\varepsilon}}e^{-\int_{0}^{t}%
c(X_{s}^{\varepsilon})ds}} =\frac{\int_{[0,1)^{d}}\hat{p}_{t}^{\varepsilon
}(x,y)dy\nu^{\varepsilon}(dx)}{{\int_{[0,1)^{d}}\int_{[0,1)^{d}}\hat{p}%
_{t}^{\varepsilon}(x,y)dy\nu^{\varepsilon}(dx)}} \rightarrow\psi^{\varepsilon
}(y)dy.
\]

\medskip We next give the formal definition of a QSD (with soft killing) and
make precise the correspondence between solutions of the eigenfunction problem
and such QSDs.

\begin{definition}
Let $\mu^{\varepsilon}\in\mathcal{P}([0,1)^{d})$. $\mu^{\varepsilon}$ is a QSD
for the dynamics (\ref{eqn:SD}) and the killing rate $c$ if for all bounded
and continuous functions $f$ on $[0,1)^{d}$ and $\Delta\in(0,\infty)$%
\[
\int_{\lbrack0,1)^{d}}f(x)\mu^{\varepsilon}(dx)=\frac{1}{E_{\mu^{\varepsilon}%
}e^{-\int_{0}^{\Delta}c(X_{s}^{\varepsilon})ds}}\cdot E_{\mu^{\varepsilon}%
}e^{-\int_{0}^{\Delta}c(X_{s}^{\varepsilon})ds}f(X_{\Delta}^{\varepsilon}),
\]
where $\{X_{s}^{\varepsilon},s\geq0\}$ is the solution to (\ref{eqn:SD}), and
$E_{\mu^{\varepsilon}}$ denotes that the distribution of $X_{0}^{\varepsilon}$
is $\mu^{\varepsilon}$.
\end{definition}

It follows from standard properties of nondegenerate diffusion processes that
a QSD must be absolutely continuous with respect to Lebesgue measure on
$[0,1)^{d}$ and with a smooth density, and thus we write $\mu^{\varepsilon
}(dx)=\psi^{\varepsilon}(x)dx$.

\begin{theorem}
\label{lem:qsd-ef} Assume Condition \ref{con:reg}. If $\psi^{\varepsilon}(x)$
is a QSD for (\ref{eqn:SD}) and $c$ then it is a non-negative eigenfunction
for (\ref{eqn:QSD}) with eigenvalue $\lambda^{\varepsilon}=\int_{[0,1)^{d}%
}c(x)\psi^{\varepsilon}(x)dx$. Conversely, if $(\psi^{\varepsilon}%
(x),\lambda^{\varepsilon})$ is a solution to (\ref{eqn:QSD}) with
$\lambda^{\varepsilon}>0$ and real $\psi^{\varepsilon}(x)\geq0$ with integral $1$
on $[0,1)^{d}$, then it is also a QSD for (\ref{eqn:SD}) and $c$.
\end{theorem}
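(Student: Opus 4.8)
The plan is to establish the two directions separately, in each case by bridging between the "QSD fixed-point" formulation and the "eigenfunction" formulation through the semigroup identity $E_{\mu^{\varepsilon}}e^{-\int_{0}^{\Delta}c(X_{s}^{\varepsilon})ds}f(X_{\Delta}^{\varepsilon})=\int\!\!\int \hat{p}_{\Delta}^{\varepsilon}(x,y)f(y)\,dy\,\mu^{\varepsilon}(dx)$ supplied by the preceding discussion, combined with differentiation in $\Delta$ to produce the infinitesimal (PDE) statement. Throughout I would use the regularity of $\psi^{\varepsilon}$ (smooth density, from the remark following the QSD definition) so that all the manipulations below are justified.

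First, for the direction "QSD $\Rightarrow$ eigenfunction": suppose $\mu^{\varepsilon}(dx)=\psi^{\varepsilon}(x)dx$ is a QSD. Writing $Z_{\Delta}:=E_{\mu^{\varepsilon}}e^{-\int_{0}^{\Delta}c(X_{s}^{\varepsilon})ds}$, the defining identity says that the (sub-probability) flow $\mu^{\varepsilon}\hat{p}_{\Delta}^{\varepsilon}$ equals $Z_{\Delta}\,\mu^{\varepsilon}$ for every $\Delta>0$. The map $\Delta\mapsto Z_{\Delta}$ is then multiplicative ($Z_{\Delta+\Delta'}=Z_{\Delta}Z_{\Delta'}$, by the semigroup/Markov property applied to the stationarity), continuous, and takes values in $(0,1]$, hence $Z_{\Delta}=e^{-\lambda^{\varepsilon}\Delta}$ for a constant $\lambda^{\varepsilon}\ge 0$. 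Now differentiate the identity $\int f\,d\mu^{\varepsilon} = e^{\lambda^{\varepsilon}\Delta}E_{\mu^{\varepsilon}}[e^{-\int_{0}^{\Delta}c\,ds}f(X_{\Delta}^{\varepsilon})]$ at $\Delta=0$ for $f\in C_{c}^{2}$: the right side has derivative $\lambda^{\varepsilon}\int f\,d\mu^{\varepsilon}+\int(\mathcal{L}^{\varepsilon}f-cf)\,d\mu^{\varepsilon}$, while the left side has derivative $0$; thus $\int(\mathcal{L}^{\varepsilon}f - cf)\,\psi^{\varepsilon}\,dx=-\lambda^{\varepsilon}\int f\psi^{\varepsilon}\,dx$ for all such $f$. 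Integrating by parts (legitimate by the smoothness of $\psi^{\varepsilon}$ and the periodic boundary conditions, which kill the boundary terms) transfers $\mathcal{L}^{\varepsilon}$ to its adjoint, yielding $-\mathcal{L}^{\varepsilon,\ast}\psi^{\varepsilon}+c\psi^{\varepsilon}=\lambda^{\varepsilon}\psi^{\varepsilon}$ weakly, hence classically by the regularity theorem. Taking $f\equiv 1$ in the weak identity (or integrating the PDE over $[0,1)^{d}$ and using that $\int \mathcal{L}^{\varepsilon,\ast}\psi^{\varepsilon}\,dx=0$ by periodicity) gives $\lambda^{\varepsilon}=\int c(x)\psi^{\varepsilon}(x)\,dx$. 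Non-negativity of $\psi^{\varepsilon}$ is immediate since it is a probability density; and $\lambda^{\varepsilon}>0$ follows because $\lambda^{\varepsilon}=\int c\,\psi^{\varepsilon}\ge 0$ with equality only if $c\equiv 0$ on the support of $\psi^{\varepsilon}$, which (since $\psi^{\varepsilon}>0$, by the positivity from the earlier theorem once we know $\psi^{\varepsilon}$ is the principal eigenfunction) forces $c\equiv 0$, a degenerate case one can exclude or note trivially.

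For the converse, "eigenfunction $\Rightarrow$ QSD": suppose $(\psi^{\varepsilon},\lambda^{\varepsilon})$ solves $-\mathcal{L}^{\varepsilon,\ast}\psi^{\varepsilon}+c\psi^{\varepsilon}=\lambda^{\varepsilon}\psi^{\varepsilon}$ with $\lambda^{\varepsilon}>0$, $\psi^{\varepsilon}\ge 0$, $\int\psi^{\varepsilon}=1$. Define $h(\Delta):=E_{\psi^{\varepsilon}dx}[e^{-\int_{0}^{\Delta}c\,ds}f(X_{\Delta}^{\varepsilon})]=\int\!\!\int\hat{p}_{\Delta}^{\varepsilon}(x,y)f(y)\,dy\,\psi^{\varepsilon}(x)\,dx$ for $f\in C_{c}^{2}$. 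Using the adjoint action of the killed semigroup — i.e., that $u(\Delta,\cdot):=(\mu\mapsto\mu\hat{p}_{\Delta}^{\varepsilon})$ applied to the density $\psi^{\varepsilon}$ solves the forward equation $\partial_{\Delta}u=\mathcal{L}^{\varepsilon,\ast}u-cu$ with $u(0,\cdot)=\psi^{\varepsilon}$ — the eigen-relation gives $u(\Delta,\cdot)=e^{-\lambda^{\varepsilon}\Delta}\psi^{\varepsilon}$, so $h(\Delta)=e^{-\lambda^{\varepsilon}\Delta}\int f\psi^{\varepsilon}\,dx$. In particular, with $f\equiv 1$, $E_{\psi^{\varepsilon}dx}e^{-\int_{0}^{\Delta}c\,ds}=e^{-\lambda^{\varepsilon}\Delta}$, which is nonzero, so dividing one obtains exactly the QSD identity $\int f\,\psi^{\varepsilon}\,dx=(E_{\psi^{\varepsilon}dx}e^{-\int_{0}^{\Delta}c\,ds})^{-1}E_{\psi^{\varepsilon}dx}[e^{-\int_{0}^{\Delta}c\,ds}f(X_{\Delta}^{\varepsilon})]$; a density argument extends this from $C_{c}^{2}$ to all bounded continuous $f$.

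The main obstacle is the careful justification of the two "differentiation / semigroup identification" steps — namely that $\Delta\mapsto E_{\mu^{\varepsilon}}[e^{-\int_{0}^{\Delta}c\,ds}f(X_{\Delta}^{\varepsilon})]$ is differentiable at $0$ with the expected generator, and that the killed forward semigroup genuinely maps the density $\psi^{\varepsilon}$ to $u(\Delta,\cdot)$ solving $\partial_{\Delta}u=\mathcal{L}^{\varepsilon,\ast}u-cu$. Both are standard consequences of the nondegenerate parabolic regularity available under Condition \ref{con:reg} (the Green's function $\hat{p}_{t}^{\varepsilon}$ is smooth and satisfies the Kolmogorov equations in both variables), but they should be invoked explicitly; with those in hand the rest is bookkeeping. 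A minor additional point is ensuring $\lambda^{\varepsilon}$ as produced in the first direction coincides with the principal eigenvalue of the earlier theorem — this follows since any QSD-density is positive and a positive eigenfunction, and the earlier theorem asserts uniqueness of the eigenvalue admitting a positive eigenfunction.
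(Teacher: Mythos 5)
Your proof is correct. The forward direction (QSD $\Rightarrow$ eigenfunction) is essentially the paper's: both differentiate the QSD fixed-point identity at $\Delta=0$ and integrate by parts; your preliminary observation that $Z_\Delta$ is multiplicative, hence $Z_\Delta=e^{-\lambda^\varepsilon\Delta}$, is a nice but inessential repackaging.

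The converse is where you genuinely diverge. The paper works with the Green's function $\hat p_t^\varepsilon(x,y)$, uses the ``reversibility'' identity $[\mathcal L_x^\varepsilon - c(x)]\hat p_t^\varepsilon(x,y) = [\mathcal L_y^{\varepsilon,\ast}-c(y)]\hat p_t^\varepsilon(x,y)$ (quoted from Budhiraja--Dupuis--Nyquist--Wu) to move the operator from the $y$ to the $x$ variable, integrates by parts against $\psi^\varepsilon$ to invoke the eigenfunction equation in $x$, and then solves the resulting integral equation $E_{\psi^\varepsilon}[f(X_t)1_{\{\tau_{\mathfrak C}>t\}}] = \langle\psi^\varepsilon,f\rangle - \lambda^\varepsilon\int_0^t E_{\psi^\varepsilon}[f(X_s)1_{\{\tau_{\mathfrak C}>s\}}]\,ds$ to get the exponential decay. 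You instead observe directly that $u(\Delta,\cdot):=\int \hat p_\Delta^\varepsilon(x,\cdot)\psi^\varepsilon(x)\,dx$ solves the forward Kolmogorov equation $\partial_\Delta u = \mathcal L^{\varepsilon,\ast}u - cu$ with $u(0,\cdot)=\psi^\varepsilon$, and that $e^{-\lambda^\varepsilon\Delta}\psi^\varepsilon$ solves the same Cauchy problem by the eigenvalue equation; uniqueness then gives $u(\Delta,\cdot)=e^{-\lambda^\varepsilon\Delta}\psi^\varepsilon$, from which the QSD identity falls out immediately. Your route is shorter and more transparent, at the cost of explicitly invoking uniqueness for the periodic forward parabolic Cauchy problem (standard under Condition 2.1, as you note); the paper's route avoids that uniqueness statement by instead leaning on the duality identity for $\hat p_t^\varepsilon$ plus a Gronwall-type argument for the resulting linear integral equation. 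Both are valid; the paper's version is arguably the one that survives with fewer hypotheses if one later weakens the regularity assumptions.
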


The proof of this equivalence is given in the Appendix. Note that it follows
from the theorem that the two QSDs under consideration, as positive
eigenfunctions, must be unique (as QSDs).

\subsection{Nonlinear diffusion processes}

Nonlinear Markov processes arise naturally as the weak limit of many particle
systems with weak interactions between particles \cite{szn91,kolokol10}, such
as FV systems. We next define the particular NLMP that provides an alternative
stochastic representation for the solution to (\ref{eqn:QSD}). We seek a pair
$(\bar{X}_{t}^{\varepsilon},\psi_{t}^{\varepsilon}(x))$ such that the
following hold.

\medskip

\textbf{Forward process} $\bar{X}_{t}^{\varepsilon}$. Between killing events,
$\bar{X}_{t}^{\varepsilon}$ evolves according to the same dynamics as
(\ref{eqn:SDE}): $d\bar{X}^{\varepsilon}=-DV(\bar{X}^{\varepsilon}%
)dt+\sqrt{\varepsilon}dW$. For any $t>0$, let $P\{\bar{X}_{t}^{\varepsilon}\in
dx\}=\psi_{t}^{\varepsilon}(x)dx$ for $x\in\lbrack0,1]^{d}$ define $\psi
_{t}^{\varepsilon}(x)$. The final condition in the definition is that the
process is killed at rate $c(\bar{X}_{t}^{\varepsilon})$, but when killed it
is instantly reborn at a new location selected according to $\psi
_{t}^{\varepsilon}(x)dx$.

The process $\bar{X}_{t}^{\varepsilon}$ is called a \textquotedblleft
nonlinear\textquotedblright\ Markov process because, unlike $X_{t}%
^{\varepsilon}$, the evolution equation for the density $\psi_{t}%
^{\varepsilon}$ is nonlinear. The process $X_{t}^{\varepsilon}$ is killed at
rate $c(X_{t}^{\varepsilon})$, and for the convergence in \eqref{eqn:largeT}
we must renormalize by the survival probability to keep the right hand side
integrating to $1$. The rebirth mechanism of $\bar{X}_{t}^{\varepsilon}$
achieves exactly the same end, but the process survives for all time: assuming
$X_{0}^{\varepsilon}$ and $\bar{X}_{0}^{\varepsilon}$ have the same
distribution, we have
\[
P\left\{  \bar{X}_{t}^{\varepsilon}\in dx\right\}  =\frac{Ee^{-\int_{0}%
^{T}c(X_{t}^{\varepsilon})dt}1_{\{dx\}}(X_{t}^{\varepsilon})}{Ee^{-\int
_{0}^{T}c(X_{t}^{\varepsilon})dt}}.
\]
A proof of this relation can be argued by showing that both sides of the last
display are solutions to the forward equation
\[
\frac{d}{dt}\psi^{\varepsilon}_{t} (x) = \mathcal{H}^{\varepsilon
,\psi^{\varepsilon}_{t} ,\ast}\psi^{\varepsilon}_{t} (x),
\]
where
\[
\mathcal{H}^{\varepsilon,\psi,\ast}g(x)=\mathcal{L}^{\varepsilon,\ast
}g(x)+\int_{[0,1)^{d}}c(y)\psi(x)[g(y)-g(x)]dy+\int_{[0,1)^{d}}g(x)[c(y)\psi
(x)-c(x)\psi(y)]dy.
\]
It is easy to check that $\mathcal{H}^{\varepsilon,\psi^{\varepsilon},\ast
}\psi^{\varepsilon}(x)=0$, and that $\psi^{\varepsilon}(x)$ is the unique
fixed point of the forward equation.

Since the process $\bar{X}_{t}^{\varepsilon}$ is defined for all time and, as
was discussed previously $P\left\{  \bar{X}_{t}^{\varepsilon}\in dx\right\}
\rightarrow\psi^{\varepsilon}(x)dx$, it provides a natural path for an
infinite time Monte Carlo approximation. One can formulate an SDE for this
process, in which a Poisson random measure provides the killing and rebirth
mechanism. However, since it is not used we omit the precise statement.

\medskip

\textbf{Backward process} $\bar{Y}_{t}^{\varepsilon}$. Similarly there is a
NLMP $\bar{Y}_{t}^{\varepsilon}$ that follows the time reversed dynamics
(\ref{eqn:SDEdual}) between killing events, is killed at rate $\bar{c}(\bar
{Y}_{t}^{\varepsilon})$, is reborn according to $\phi_{t}^{\varepsilon
}(y)\doteq P\{\bar{Y}_{t}^{\varepsilon}\in dy\}$, and for which
\[
P\{\bar{Y}_{t}^{\varepsilon}\in dy\}\rightarrow\phi^{\varepsilon}(y)dy.
\]
Since these two processes are independent, we find
\[
P\{(\bar{X}_{t}^{\varepsilon},\bar{Y}_{t}^{\varepsilon})\in dx\times
dy\}\rightarrow\psi^{\varepsilon}(x)\phi^{\varepsilon}(y)dxdy.
\]

Let $\bar{\psi}^{\varepsilon}(x) \doteq e^{-\frac{2}{\varepsilon}V(x)}%
\phi^{\varepsilon}(x)$. Using
\begin{align*}
D\bar{\psi}^{\varepsilon}(x)=  &  -\frac{2}{\varepsilon}DV(x)e^{-\frac
{2}{\varepsilon}V(x)}\phi^{\varepsilon}(x)+e^{-\frac{2}{\varepsilon}V(x)}%
D\phi^{\varepsilon}(x)\\
D^{2}\bar{\psi}^{\varepsilon}(x)=  &  -\frac{2}{\varepsilon}D^{2}%
V(x)e^{-\frac{2}{\varepsilon}V(x)}\phi^{\varepsilon}(x)+\frac{4}%
{\varepsilon^{2}}DV(x)^{T}DV(x)e^{-\frac{2}{\varepsilon}V(x)}\phi
^{\varepsilon}(x)\\
&  -\frac{4}{\varepsilon}DV(x)^{T}e^{-\frac{2}{\varepsilon}V(x)}%
D\phi^{\varepsilon}(x)+e^{-\frac{2}{\varepsilon}V(x)}D^{2}\phi^{\varepsilon
}(x),
\end{align*}
one can verify that (\ref{eqn:QSD}) is valid for $\bar{\psi}^{\varepsilon}$,
and therefore by uniqueness of solutions establish $\psi^{\varepsilon
}(x)=e^{-\frac{2}{\varepsilon}V(x)}\phi^{\varepsilon}(x)$. The calculation is
as follows:%
\begin{align*}
-\mathcal{L}^{\varepsilon,\ast}\bar{\psi}^{\varepsilon}(x)+c(x)\bar{\psi
}^{\varepsilon}(x)=  &  -\left\langle DV(x),D\bar{\psi}^{\varepsilon
}(x)\right\rangle -\varepsilon\text{tr}[D^{2}\bar{\psi}^{\varepsilon
}(x)]-\text{tr}[D^{2}V(x)]\bar{\psi}^{\varepsilon}(x)+c(x)\psi^{\varepsilon
}(x)\\
=  &  \, \frac{2}{\varepsilon}\left\Vert DV(x)\right\Vert ^{2}e^{-\frac
{2}{\varepsilon}V(x)}\phi^{\varepsilon}(x)-\left\langle DV(x),e^{-\frac
{2}{\varepsilon}V(x)}D\phi^{\varepsilon}(x)\right\rangle \\
&  +\text{tr}[D^{2}V(x)]e^{-\frac{2}{\varepsilon}V(x)}\phi^{\varepsilon
}(x)-\frac{2}{\varepsilon}\left\Vert DV(x)\right\Vert ^{2}e^{-\frac
{2}{\varepsilon}V(x)}\phi^{\varepsilon}(x)\\
&  +2e^{-\frac{2}{\varepsilon}V(x)}\left\langle DV(x),D\phi^{\varepsilon
}(x)\right\rangle -\frac{\varepsilon}{2}e^{-\frac{2}{\varepsilon}%
V(x)}\text{tr}[D^{2}\phi^{\varepsilon}(x)]\\
&  -\text{tr}[D^{2}V(x)]e^{-\frac{2}{\varepsilon}V(x)}\phi^{\varepsilon
}(x)+c(x)e^{-\frac{2}{\varepsilon}V(x)}\phi^{\varepsilon}(x)\\
=  &  \left[  \left\langle DV(x),D\phi^{\varepsilon}(x)\right\rangle
-\frac{\varepsilon}{2}\text{tr}[D^{2}\phi^{\varepsilon}(x)]+c(x)\phi
^{\varepsilon}(x)\right]  e^{-\frac{2}{\varepsilon}V(x)}\\
=  &  \, \lambda^{\varepsilon}\phi^{\varepsilon}(x)e^{-\frac{2}{\varepsilon
}V(x)}=\lambda^{\varepsilon}\bar{\psi}^{\varepsilon}(x).
\end{align*}

Next suppose we write $\phi^{\varepsilon}(x)=e^{-\frac{1}{\varepsilon}\Phi
(x)}$ and $\psi^{\varepsilon}(x)=e^{-\frac{1}{\varepsilon}\Psi(x)}$, which is
possible with $\Phi(x)$ and $\Psi(x)$ being in $C^{2,\alpha}$. Then
$\psi^{\varepsilon}(x)=e^{-\frac{2}{\varepsilon}V(x)}\phi^{\varepsilon}(x)$
implies the relation
\[
V(x)=\frac{1}{2}[\Psi(x)-\Phi(x)]
\]
between the original dynamics and the two eigenfunctions. As we show in the
next section, this relation is the key to determining a proper swapping
mechanism. In particular, though $\Psi$ and $\Phi$ are a priori unknown, $V$
is available for the design of a scheme. Swapping is not possible for one
process in isolation, and instead one must work with backward/forward pairs.

\subsection{Swapping of a forward/backward pair}

\label{sec:swap} We follow the logic of parallel tempering to construct a jump
diffusion with the same joint stationary distribution as $(\bar{X}%
_{t}^{\varepsilon},\bar{Y}_{t}^{\varepsilon})$ while allowing swaps. For a
jump of the form
\[
(\bar{X}_{t}^{\varepsilon},\bar{Y}_{t}^{\varepsilon})=(\bar{Y}_{t-}%
^{\varepsilon},\bar{X}_{t-}^{\varepsilon})
\]
(which we refer to as a swap) we use the continuous time version of the
Metropolis rule, which means the swap occurs at rate $r(\bar{X}_{t}%
^{\varepsilon},\bar{Y}_{t}^{\varepsilon})$, where
\[
r(x,y)=e^{-\frac{1}{\varepsilon}\left(  \Psi(y)+\Phi(x)-\Psi(x)-\Phi
(y)\right)  ^{+}}%
\]
and $a^{+}=\max\{a,0\}$. Although the dynamics undergo a jump, the first
component is still following the forward dynamics and the second the backward
dynamics between jumps. The relation $V(x)=\frac{1}{2}\left[  \Psi(x)-\Phi(x)\right]  $
implies $r(x,y)=e^{-\frac{1}{\varepsilon}(2V(y)-2V(x))^{+}}$, and so this rule
can be implemented (though as noted previously the NLMP itself will require
approximation by a FV system). Let $(\bar{X}_{t}^{\varepsilon,1},\bar{Y}%
_{t}^{\varepsilon,1})$ denote the process so constructed, and note that the
pair are no longer independent. We claim that with this jump rate the process
$(\bar{X}_{t}^{\varepsilon,1},\bar{Y}_{t}^{\varepsilon,1})$ still has%
\[
\psi^{\varepsilon}(x)\phi^{\varepsilon}(y)dxdy
\]
as the large time limit of its distribution:
\begin{equation}
P\{(\bar{X}_{t}^{\varepsilon},\bar{Y}_{t}^{\varepsilon})\in dx\times
dy\}\rightarrow\psi^{\varepsilon}(x)\phi^{\varepsilon}(y)dxdy.
\label{eqn:empmconv}%
\end{equation}
This claim is a consequence of the following lemma.

\begin{lemma}
\label{lem:swap} Let $(\psi^{\varepsilon},\lambda^{\varepsilon})$
[respectively, $(\phi^{\varepsilon},\lambda^{\varepsilon})$] denote the
dominant eigenfunction/eigenvalue for $\mathcal{L}^{\varepsilon,\ast}$
[respectively, $\mathcal{L}^{\varepsilon}$]. For smooth $h:[0,1]^{d}%
\times\lbrack0,1]^{d}$ let
\begin{align*}
2\mathcal{H}^{\varepsilon}h(x,y)  &  =-\left\langle D_{x}V(x),D_{x}%
h(x,y)\right\rangle +\left\langle D_{y}V(y),D_{y}h(x,y)\right\rangle \\
&  \quad-\frac{\varepsilon}{2}\text{\emph{tr}}[D_{x}^{2}h(x,y)]-\frac
{\varepsilon}{2}\text{\emph{tr}}[D_{y}^{2}h(x,y)]\\
&  \quad-c^{\ast}(x)h(x,y)+c(x)h(x,y)+c(y)h(x,y).
\end{align*}
Then $(h^{\varepsilon},\lambda^{\varepsilon})$ is an eigenfunction/eigenvalue
pair for $\mathcal{H}^{\varepsilon}$, where $h^{\varepsilon}(x,y)=\psi
^{\varepsilon}(x)\phi^{\varepsilon}(y)$. Furthermore, if we define
\[
\mathcal{A}^{\varepsilon}f(x,y)=e^{-\frac{1}{\varepsilon}\left(  \Psi
(y)+\Phi(x)-\Psi(x)-\Phi(y)\right)  ^{+}}\left[  f(y,x)-f(x,y)\right]  ,
\]
then $(h^{\varepsilon},\lambda^{\varepsilon})$ is also the dominant
eigenfunction/eigenvalue for $\mathcal{H}^{\varepsilon}+\mathcal{A}%
^{\varepsilon,\ast}$.
\end{lemma}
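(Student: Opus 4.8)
The first assertion is a tensorization identity. With $A=-[\mathcal{L}^{\varepsilon,\ast}-c]$ and $B=-[\mathcal{L}^{\varepsilon}-c]$, the definition in the statement reads $2\mathcal{H}^{\varepsilon}h(x,y)=(A_{x}+B_{y})h(x,y)$, where $A_{x}$ (resp.\ $B_{y}$) denotes $A$ (resp.\ $B$) acting in the $x$ (resp.\ $y$) variable with the other one frozen. By \eqref{eqn:QSD} and \eqref{eqn:QSDdual} we have $A\psi^{\varepsilon}=\lambda^{\varepsilon}\psi^{\varepsilon}$ and $B\phi^{\varepsilon}=\lambda^{\varepsilon}\phi^{\varepsilon}$. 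Since $h^{\varepsilon}(x,y)=\psi^{\varepsilon}(x)\phi^{\varepsilon}(y)$ is a product, applying $A$ in $x$, $B$ in $y$, and adding gives $2\mathcal{H}^{\varepsilon}h^{\varepsilon}=2\lambda^{\varepsilon}h^{\varepsilon}$. This settles the claim for $\mathcal{H}^{\varepsilon}$.

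Next I would check $\mathcal{A}^{\varepsilon,\ast}h^{\varepsilon}\equiv 0$, which together with the previous step gives $(\mathcal{H}^{\varepsilon}+\mathcal{A}^{\varepsilon,\ast})h^{\varepsilon}=\lambda^{\varepsilon}h^{\varepsilon}$. Taking the Lebesgue adjoint of the swap operator (rename $(x,y)\leftrightarrow(y,x)$ in the gain term) gives $\mathcal{A}^{\varepsilon,\ast}g(x,y)=r(y,x)g(y,x)-r(x,y)g(x,y)$, so $\mathcal{A}^{\varepsilon,\ast}h^{\varepsilon}=0$ is precisely the detailed balance identity $r(x,y)h^{\varepsilon}(x,y)=r(y,x)h^{\varepsilon}(y,x)$. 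Writing $\psi^{\varepsilon}=e^{-\Psi/\varepsilon}$, $\phi^{\varepsilon}=e^{-\Phi/\varepsilon}$, and $a=\Psi(y)+\Phi(x)-\Psi(x)-\Phi(y)$, this reduces to the elementary identity $a^{+}-(-a)^{+}=a$; this is the step where the relation $V=\tfrac12(\Psi-\Phi)$ is used, and it is exactly what makes the Metropolis rate $r$ reversible for $h^{\varepsilon}$. Hence $(h^{\varepsilon},\lambda^{\varepsilon})$ is an eigenpair.

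The substantive point is that this eigenpair is \emph{dominant}, and this is the step I expect to be the main obstacle. The plan is to put $\mathcal{H}^{\varepsilon}+\mathcal{A}^{\varepsilon,\ast}$ in the Perron--Frobenius/Krein--Rutman framework. Since the Metropolis rates lie in $(0,1]$, $\mathcal{A}^{\varepsilon,\ast}$ is a bounded perturbation of the nondegenerate elliptic operator $\mathcal{H}^{\varepsilon}$ on the compact torus, so the perturbed operator has compact resolvent and discrete spectrum; what remains is to show that $\lambda^{\varepsilon}$ is simple, that $h^{\varepsilon}>0$ is (up to scalar multiples) the unique positive eigenfunction, and that every other eigenvalue $\gamma$ has $\operatorname{Re}(\gamma)>\lambda^{\varepsilon}$. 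The natural route is to identify $\mathcal{H}^{\varepsilon}+\mathcal{A}^{\varepsilon,\ast}$ --- after the similarity transform $g\mapsto g/h^{\varepsilon}$, which turns $\mathcal{A}^{\varepsilon,\ast}$ back into the genuine jump generator $\mathcal{A}^{\varepsilon}$ (again by the reversibility identity) and turns $\mathcal{H}^{\varepsilon}-\lambda^{\varepsilon}$ into $-\tfrac12$ times a sum of two conservative diffusion generators, and after passing to the adjoint --- with the generator of the honest coupled forward/backward jump--diffusion on $[0,1)^{2d}$ built from $\bar X^{\varepsilon}$, $\bar Y^{\varepsilon}$, and the swap. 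That process has a positivity-preserving, irreducible (the diffusion part is nondegenerate and the swap does not disconnect the state space), and eventually compact transition semigroup, so Krein--Rutman gives a simple leading eigenvalue with a strictly positive eigenfunction, strictly isolated from the rest of the spectrum; since $\mathcal{A}^{\varepsilon,\ast}$ annihilates $h^{\varepsilon}$ the leading eigenvalue is $\lambda^{\varepsilon}$ with eigenfunction $h^{\varepsilon}$, and a spectral gap persists, with the product of the gaps of $A$ and $B$ supplied by the theorem recalled above as the unperturbed input. The delicate part is exactly this reduction: $-\mathcal{A}^{\varepsilon,\ast}$ is not off-diagonally nonnegative as written, so one cannot simply perturb $-\mathcal{H}^{\varepsilon}$ directly, and one must verify irreducibility carefully so that the Perron--Frobenius structure genuinely applies.
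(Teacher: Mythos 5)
Your proposal is correct and follows essentially the same route as the paper: tensorize the eigenfunction identity to get $\mathcal{H}^{\varepsilon}h^{\varepsilon}=\lambda^{\varepsilon}h^{\varepsilon}$, verify $\mathcal{A}^{\varepsilon,\ast}h^{\varepsilon}=0$ by the detailed-balance computation (the paper checks the two cases on the sign of $\Psi(x)+\Phi(y)-\Psi(y)-\Phi(x)$, which is exactly your $a^{+}-(-a)^{+}=a$), and deduce dominance from irreducibility. Two small remarks: the relation $V=\tfrac12(\Psi-\Phi)$ is not used inside this lemma at all --- the swap rate is defined directly via $\Psi,\Phi$ and the reversibility identity is purely algebraic, with $V$ entering only afterwards to make $r$ computable; and the paper handles dominance in a single sentence (irreducibility of $[0,1]^{d}\times[0,1]^{d}$ under $\mathcal{H}^{\varepsilon}+\mathcal{A}^{\varepsilon,\ast}$ plus strict positivity of $h^{\varepsilon}$), without the Krein--Rutman/similarity-transform scaffolding you anticipate being the main obstacle.
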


\begin{proof}
The first claim follows directly from the eigenfunction properties of
$\psi^{\varepsilon}$ and $\phi^{\varepsilon}$. For the second, we note that
\begin{align*}
\mathcal{A}^{\varepsilon,\ast}h(x,y)  &  =e^{-\frac{1}{\varepsilon}\left(
\Psi(x)+\Phi(y)-\Psi(y)-\Phi(x)\right)  ^{+}}\left[  h(y,x)-h(x,y)\right] \\
&  \quad+\left[ e^{-\frac{1}{\varepsilon}\left(  \Psi(x)+\Phi
(y)-\Psi(y)-\Phi(x)\right)^{+}} -e^{-\frac{1}{\varepsilon}\left(  \Psi(y)+\Phi(x)-\Psi
(x)-\Phi(y)\right)  ^{+}}\right]  h(x,y).
\end{align*} 
Inserting $h(x,y)=\psi^{\varepsilon}(x)\phi^{\varepsilon}(y)=e^{-\frac
{1}{\varepsilon}\left(  \Psi(x)+\Phi(y)\right)  }$, we find by checking the
two cases according to the sign of~$\left(  \Psi(x)+\Phi(y)-\Psi
(y)-\Phi(x)\right)  $ that
\begin{align*}
\mathcal{A}^{\varepsilon,\ast}h(x,y)  &  =e^{-\frac{1}{\varepsilon}\left(
\Psi(x)+\Phi(y)-\Psi(y)-\Phi(x)\right)  ^{+}}e^{-\frac{1}{\varepsilon}\left(
\Psi(y)+\Phi(x)\right)  }\\
&  \quad-e^{-\frac{1}{\varepsilon}\left(  \Psi(y)+\Phi(x)-\Psi(x)-\Phi
(y)\right)  ^{+}}e^{-\frac{1}{\varepsilon}\left(  \Psi(x)+\Phi(y)\right)  }\\
&  =0.
\end{align*}
Therefore $\psi^{\varepsilon}(x)\phi^{\varepsilon}(y)$ is an eigenfunction of
$\mathcal{H}^{\varepsilon}+\mathcal{A}^{\varepsilon,\ast}$ with eigenvalue
$\lambda^{\varepsilon}$. Since $[0,1]^{d}\times\lbrack0,1]^{d}$ is irreducible
under $\mathcal{H}^{\varepsilon}+\mathcal{A}^{\varepsilon,\ast}$,
$\psi^{\varepsilon}(x)\phi^{\varepsilon}(y)$ is a strictly positive
eigenfunction and $(h^{\varepsilon},\lambda^{\varepsilon})$ is the dominant
eigenfunction/eigenvalue pair.
\end{proof}

\subsection{Infinite swapping}

\label{sec:infswap} The swap rate $r(x,y)$ of the last section can be replaced
by $Kr(x,y)$ for any $K\in(0,\infty)$ with the same conclusions, and in
particular (\ref{eqn:empmconv}) still applies. Thus a natural question is how
to pick $K$. Based on a prior analysis of parallel tempering using the large
deviation rate function to define a rate of convergence \cite{dupliupladol},
one expects that the optimal rate is only obtained in the limit $K\rightarrow
\infty$. As we will see, with a properly formulated limit process one can
\textquotedblleft hard code\textquotedblright\ the swaps, and thus the random
variables used to implement the swaps are no longer needed.

We next show how to construct the limiting process, and explain how it is used
to generate samples from the QSDs. However, before passing to the limit we
must rearrange how the samples are organized so there is a well defined limit.
Reflecting $K$ in the notation, a difficulty is that the processes
\begin{equation}
\{(\bar{X}_{t}^{\varepsilon,K},\bar{Y}_{t}^{\varepsilon,K}),\gamma
_{t}^{\varepsilon,K}(x,y),t\in\lbrack0,\infty)\}, \label{eqn:jumpdyn}%
\end{equation}
where $\gamma_{t}^{\varepsilon,K}(x,y)$ is the density of $(\bar{X}%
_{t}^{\varepsilon,K},\bar{Y}_{t}^{\varepsilon,K})$, are not tight (i.e.,
precompact), since the number of jumps of each component of (\ref{eqn:jumpdyn}%
) explodes as $K\rightarrow\infty$. However, as noted in \cite{dupliupladol}
one can avoid this problem and pass to the limit by using what amounts to
alternative bookkeeping. In this alternative formulation there are no jumps of
the type $(x,y)\rightarrow(y,x)$. Instead it is the dynamics that are swapped,
so that $\bar{X}_{t}^{\varepsilon,K}$ can, depending on the parity of the
number of jumps so far, follow either the forward dynamics or the backward
dynamics. A weak convergence analysis justifying the construction of the limit
derived below can be given using arguments analgous to those of
\cite{dupliupladol}.

We first identify the forward equation for $\gamma_{t}^{\varepsilon,K}$. The
contribution of the dynamics (but not including the killing terms) for the
pair of particles between jumps to the forward equation are handled by the
term%
\begin{align*}
2\mathcal{G}^{\varepsilon}h(x,y)  &  =-\left\langle D_{x}V(x),D_{x}%
h(x,y)\right\rangle +\left\langle D_{y}V(y),D_{y}h(x,y)\right\rangle \\
&  \quad-\frac{\varepsilon}{2}\text{tr}[D_{x}^{2}h(x,y)]-\frac{\varepsilon}%
{2}\text{tr}[D_{y}^{2}h(x,y)],
\end{align*}
which simply combines $\mathcal{L}^{\varepsilon}$ and $\mathcal{L}%
^{\varepsilon,\ast}-c^{\ast}$ appropriately. There is also a term for the
swapping dynamic. With swap rate
\[
Kr^{\varepsilon}(x,y)=Ke^{-\frac{2}{\varepsilon}(V(y)-V(x))^{+}}%
\]
for the exchange $(x,y)\rightarrow(y,x)$, the adjoint takes the form
\begin{equation}
\mathcal{G}_{s}^{\varepsilon}h(x,y)=Kr^{\varepsilon}%
(y,x)h(y,x)-Kr^{\varepsilon}(x,y)h(x,y). \label{eqn:defofGs}%
\end{equation}
Finally there are the killing and resampling terms, which in the generator of
the dynamics appear as (recall $\bar{c}=c-c^{\ast}$)
\begin{align*}
&  \int_{[0,1)^{d}}c(x)\left(  \int_{[0,1)^{d}}\gamma(z,y)dy\right)  \left[
h(z,y)-h(x,y)\right]  dz\\
&  +\int_{[0,1)^{d}}\bar{c}(y)\left(  \int_{[0,1)^{d}}\gamma(x,z)dx\right)
\left[  h(x,z)-h(x,y)\right]  dz.
\end{align*} 
The first term accounts for killing the $x$ particle at rate $c(x)$ and then
resampling from the $x$ marginal of the density $\gamma$, while keeping the
$y$ component fixed, and the reverse for the second term with rate $\bar{c}$.
The adjoints of these take the form
\begin{align*}
\mathcal{G}_{k}^{\varepsilon,\gamma}h(x,y)=  &  \int_{[0,1)^{d}}c(z)\left(
\int_{[0,1)^{d}}\gamma(x,y)dy\right)  h(z,y)dz-\int_{[0,1)^{d}}c(x)\left(
\int_{[0,1)^{d}}\gamma(z,y)dy\right)  h(x,y)dz\\
&  +\int_{[0,1)^{d}}\bar{c}(z)\left(  \int_{[0,1)^{d}}\gamma(x,y)dx\right)
h(x,z)dz-\int_{[0,1)^{d}}\bar{c}(y)\left(  \int_{[0,1)^{d}}\gamma
(x,z)dx\right)  h(x,y)dz.
\end{align*}

When combined, the forward equation for $\gamma_{t}^{\varepsilon,K}$ is%
\[
\frac{d}{dt}\gamma_{t}^{\varepsilon,K}(x,y)=\mathcal{\hat{G}}^{\varepsilon
,\gamma_{t}^{\varepsilon,K}}\gamma_{t}^{\varepsilon,K}(x,y),
\]
where
\[
\mathcal{\hat{G}}^{\varepsilon,\gamma}h(x,y)=\mathcal{G}^{\varepsilon
}h(x,y)+\mathcal{G}_{s}^{\varepsilon}h(x,y)+\mathcal{G}_{k}^{\varepsilon
,\gamma}h(x,y).
\]
As a check on the form, we verify that the fixed point of the forward equation
is $\psi^{\varepsilon}(x)\phi^{\varepsilon}(y)$. From a calculation in Lemma
\ref{lem:swap}, we know that regardless of $K$, if $h(x,y)=\psi^{\varepsilon
}(x)\phi^{\varepsilon}(y)$ then $\mathcal{G}_{s}^{\varepsilon}h(x,y)=0$. Hence
we need only check that when also $\gamma(x,y)=\psi^{\varepsilon}%
(x)\phi^{\varepsilon}(y)$, $\mathcal{G}^{\varepsilon}h(x,y)+\mathcal{G}%
_{k}^{\varepsilon,\gamma}h(x,y)=0$. Using $\int_{[0,1)^{d}}c(z)\psi
^{\varepsilon}(z)dz=\int_{[0,1)^{d}}\bar{c}(z)\phi^{\varepsilon}%
(z)dz=\lambda^{\varepsilon}$ we compute $\mathcal{G}_{k}^{\varepsilon,\gamma
}h(x,y)$ to be the sum of
\begin{align*}
&  \int_{\lbrack0,1)^{d}}c(z)\left(  \int_{[0,1)^{d}}\psi^{\varepsilon}%
(x)\phi^{\varepsilon}(y)dy\right)  \psi^{\varepsilon}(z)\phi^{\varepsilon
}(y)dz-\int_{[0,1)^{d}}c(x)\left(  \int_{[0,1)^{d}}\psi^{\varepsilon}%
(z)\phi^{\varepsilon}(y)dy\right)  \psi^{\varepsilon}(x)\phi^{\varepsilon
}(y)dz\\
&  =\int_{[0,1)^{d}}c(z)\psi^{\varepsilon}(x)\psi^{\varepsilon}(z)\phi
^{\varepsilon}(y)dz-\int_{[0,1)^{d}}c(x)\psi^{\varepsilon}(z)\psi
^{\varepsilon}(x)\phi^{\varepsilon}(y)dz\\
&  =\lambda^{\varepsilon}\psi^{\varepsilon}(x)\phi^{\varepsilon}%
(y)-c(x)\psi^{\varepsilon}(x)\phi^{\varepsilon}(y)
\end{align*}
and
\begin{align*}
&  \int_{\lbrack0,1)^{d}}\bar{c}(z)\left(  \int_{[0,1)^{d}}\psi^{\varepsilon
}(x)\phi^{\varepsilon}(y)dx\right)  \psi^{\varepsilon}(x)\phi^{\varepsilon
}(z)dz-\int_{[0,1)^{d}}\bar{c}(y)\left(  \int_{[0,1)^{d}}\psi^{\varepsilon
}(x)\phi^{\varepsilon}(z)dx\right)  \psi^{\varepsilon}(x)\phi^{\varepsilon
}(y)dz\\
&  =\lambda^{\varepsilon}\psi^{\varepsilon}(x)\phi^{\varepsilon}%
(y)-c(y)\psi^{\varepsilon}(x)\phi^{\varepsilon}(y) +c^{\ast}(x) \psi
^{\varepsilon}(x)\phi^{\varepsilon}(y).
\end{align*}
Since $\mathcal{G}^{\varepsilon}h(x,y)=\mathcal{L}_{x}^{\varepsilon,\ast}%
\psi^{\varepsilon}(x)\phi^{\varepsilon}(y)-c^{\ast}(x) \psi^{\varepsilon
}(x)\phi^{\varepsilon}(y)+\mathcal{L}_{y}^{\varepsilon}\psi^{\varepsilon
}(x)\phi^{\varepsilon}(y)$, adding and using the eigenfunction properties
indeed gives zero.

Now the limit of (\ref{eqn:defofGs}) does not make sense as $K\rightarrow
\infty$. Hence we consider the alternative formulation where the dynamics are
swapped rather than particle locations. We (somewhat severely) abuse notation
by retaining $(\bar{X}_{t}^{\varepsilon,K},\bar{Y}_{t}^{\varepsilon,K})$ for
this new process with swapped dynamics, and keep track of which component is
playing which role by adding a $\{1,2\}$-valued process $S_{t}^{\varepsilon
,K}$ that records if the assignment of dynamics is (fwd,bkwd) [$S_{t}%
^{\varepsilon,K}=1$] or (bkwd,fwd) [$S_{t}^{\varepsilon,K}=2$]. The state
space of the new process $(S_{t}^{\varepsilon,K},\bar{X}_{t}^{\varepsilon
,K},\bar{Y}_{t}^{\varepsilon,K})$ is $\{1,2\}\times\lbrack0,1)^{d}%
\times\lbrack0,1)^{d}$. The advantage of the new formulation is that a limit
is easily justified using a time scale separation argument. The $S_{t}%
^{\varepsilon,K}$ component does not converge in the usual sense, and instead
will be interpreted as a probability distribution on two points. What is key
is that the more complicated components $(\bar{X}_{t}^{\varepsilon,K},\bar
{Y}_{t}^{\varepsilon,K})$ will now have nice limits. When $K\rightarrow\infty
$, the empirical measure of $S_{t}^{\varepsilon,K}$ with respect to time
averages is trivial, and given that the particle pair $(\bar{X}_{t}%
^{\varepsilon,K},\bar{Y}_{t}^{\varepsilon,K})$ is currently at $(x,y)$,
asymptotically $S_{t}^{\varepsilon,K}$ spends the fractions of time
\begin{equation}
(F^{\varepsilon}(x,y),F^{\varepsilon}(y,x))=\frac{1}{e^{-\frac{1}{\varepsilon
}\Psi(x)}e^{-\frac{1}{\varepsilon}\Phi(y)}+e^{-\frac{1}{\varepsilon}\Psi
(y)}e^{-\frac{1}{\varepsilon}\Phi(x)}}\left(  e^{-\frac{1}{\varepsilon}%
\Psi(x)}e^{-\frac{1}{\varepsilon}\Phi(y)},e^{-\frac{1}{\varepsilon}\Psi
(y)}e^{-\frac{1}{\varepsilon}\Phi(x)}\right)  \label{eqn:defofF}%
\end{equation}
in the states $(x,y)$ and $(y,x)$, respectively, where $F^{\varepsilon}(x,y)$
is the fraction of time the particle at $x$ is in state 1 and $F^{\varepsilon
}(y,x)$ the fraction of time it is in state $2$.

We have in fact symmetrized the dynamics to make this limit possible, and the
limit dynamics of $(\bar{X}_{t}^{\varepsilon,K},\bar{Y}_{t}^{\varepsilon,K})$
with dynamics swapping rather than location swapping are defined by convex
combinations of the forward and backward dynamics. Specifially, if $\theta
_{t}^{\varepsilon,K}(x,y)$ is the density of $(\bar{X}_{t}^{\varepsilon
,K},\bar{Y}_{t}^{\varepsilon,K})$ at time $t$ then
\[
\{(\bar{X}_{t}^{\varepsilon,K},\bar{Y}_{t}^{\varepsilon,K}),\theta
_{t}^{\varepsilon,K}(x,y),t\in\lbrack0,\infty)\}
\]
tends to\footnote{We use the Skorokhod topology for sample paths of 
$[0,1)^d$-valued processes, and for densities the Dudley metric
for fixed $t$ and uniform convergence on compacts sets with respect to $t$.}
\[
\{(\bar{X}_{t}^{\varepsilon,\infty},\bar{Y}_{t}^{\varepsilon,\infty}%
),\theta_{t}^{\varepsilon,\infty}(x,y),t\in\lbrack0,\infty)\}
\]
in distribution, where
\begin{align*}
d\bar{X}_{t}^{\varepsilon,\infty} &  =\left[  F^{\varepsilon}(\bar{Y}%
_{t}^{\varepsilon,\infty},\bar{X}_{t}^{\varepsilon,\infty})-F^{\varepsilon
}(\bar{X}_{t}^{\varepsilon,\infty},\bar{Y}_{t}^{\varepsilon,\infty})\right]
DV(\bar{X}_{t}^{\varepsilon,\infty})dt+\sqrt{\varepsilon}dW_t\\
d\bar{Y}_{t}^{\varepsilon,\infty} &  =\left[  F^{\varepsilon}(\bar{X}%
_{t}^{\varepsilon,\infty},\bar{Y}_{t}^{\varepsilon,\infty})-F^{\varepsilon
}(\bar{Y}_{t}^{\varepsilon,\infty},\bar{X}_{t}^{\varepsilon,\infty})\right]
DV(\bar{Y}_{t}^{\varepsilon,\infty})dt+\sqrt{\varepsilon}d\bar{W}_t,
\end{align*}
and the $x$ component (resp., $y$ component) is killed at rate
\[
c(\bar{X}_{t}^{\varepsilon,\infty})-F^{\varepsilon}(\bar{Y}_{t}^{\varepsilon
,\infty},\bar{X}_{t}^{\varepsilon,\infty})c^{\ast}(\bar{X}_{t}^{\varepsilon
,\infty}),\quad(\text{resp., }c(\bar{Y}_{t}^{\varepsilon,\infty}%
)-F^{\varepsilon}(\bar{X}_{t}^{\varepsilon,\infty},\bar{Y}_{t}^{\varepsilon
,\infty})c^{\ast}(\bar{Y}_{t}^{\varepsilon,\infty})).
\]
If it is $\bar{X}_{t}^{\varepsilon,\infty}$ that is killed then we must decide
which state (fwd or bkwd) it was in when killed. This is done according to the
probabilities
\[
(F^{\varepsilon}(\bar{X}_{t}^{\varepsilon,\infty},\bar{Y}_{t}^{\varepsilon
,\infty}),F^{\varepsilon}(\bar{Y}_{t}^{\varepsilon,\infty},\bar{X}%
_{t}^{\varepsilon,\infty}))
\]
for (fwd,bkwd), respectively. If fwd then sample from
\[
\int_{\lbrack0,1)^{d}}\left[  F^{\varepsilon}(x,y)\theta_{t}^{\varepsilon
,\infty}(x,y)+F^{\varepsilon}(x,y)\theta_{t}^{\varepsilon,\infty}(y,x)\right]
dy
\]
to get the new location of the particle, and use
\[
\int_{\lbrack0,1)^{d}}\left[  F^{\varepsilon}(y,x)\theta_{t}^{\varepsilon
,\infty}(x,y)+F^{\varepsilon}(y,x)\theta_{t}^{\varepsilon,\infty}(y,x)\right]
dy
\]
if bkwd. To see that these are probability densities, note that for example
\begin{align*}
&  \int_{[0,1)^{d}}\int_{[0,1)^{d}}\left[  F^{\varepsilon}(x,y)\theta
_{t}^{\varepsilon,\infty}(x,y)+F^{\varepsilon}(x,y)\theta_{t}^{\varepsilon
,\infty}(y,x)\right]  dydx\\
&  =\int_{[0,1)^{d}}\int_{[0,1)^{d}}\left[  F^{\varepsilon}(x,y)\theta
_{t}^{\varepsilon,\infty}(x,y)+F^{\varepsilon}(y,x)\theta_{t}^{\varepsilon
,\infty}(x,y)\right]  dydx=1.
\end{align*}
Although the dynamics and sampling appear complicated, as we will see their
implementation with the FV approximation is not at all difficult.

The stationary density for $(\bar{X}^{\varepsilon,\infty},\bar{Y}%
^{\varepsilon,\infty})$ is
\[
h^{\varepsilon}(x,y)=\frac{1}{2}\left[  \psi^{\varepsilon}(x)\phi
^{\varepsilon}(y)+\psi^{\varepsilon}(y)\phi^{\varepsilon}(x)\right]  ,
\]
and if samples $(\bar{X}_{t}^{\varepsilon,\infty},\bar{Y}_{t}^{\varepsilon
,\infty})$ or any approximation to this process are to be used for
approximating $\psi^{\varepsilon}(x)\phi^{\varepsilon}(y)$, then the mapping
\[
F^{\varepsilon}(\bar{X}_{t}^{\varepsilon,\infty},\bar{Y}_{t}^{\varepsilon
,\infty})\delta_{(\bar{X}_{t}^{\varepsilon,\infty},\bar{Y}_{t}^{\varepsilon
,\infty})}(dx\times dy)+F^{\varepsilon}(\bar{Y}_{t}^{\varepsilon,\infty}%
,\bar{X}_{t}^{\varepsilon,\infty})\delta_{(\bar{Y}_{t}^{\varepsilon,\infty
},\bar{X}_{t}^{\varepsilon,\infty})}(dx\times dy)
\]
should be used. To check this, observe that since $x$ and $y$ appear only as
variables of integration, one has%
\[
\left[  F^{\varepsilon}(x,y)h^{\varepsilon}(x,y)+F^{\varepsilon}%
(y,x)h^{\varepsilon}(y,x)\right]  dxdy=2F^{\varepsilon}(x,y)h^{\varepsilon
}(x,y)dxdy.
\]
Using the definition of $F^{\varepsilon}$ we thus find%
\begin{align*}
2F^{\varepsilon}(x,y)h^{\varepsilon}(x,y)  &  =\frac{e^{-\frac{1}{\varepsilon
}\left[  \Psi(x)+\Phi(y)\right]  }}{e^{-\frac{1}{\varepsilon}\Psi(x)}%
e^{-\frac{1}{\varepsilon}\Phi(y)}+e^{-\frac{1}{\varepsilon}\Psi(y)}%
e^{-\frac{1}{\varepsilon}\Phi(x)}}\left[  e^{-\frac{1}{\varepsilon}\Psi
(x)}e^{-\frac{1}{\varepsilon}\Phi(y)}+e^{-\frac{1}{\varepsilon}\Psi
(y)}e^{-\frac{1}{\varepsilon}\Phi(x)}\right] \\
&  =\psi^{\varepsilon}(x)\phi^{\varepsilon}(y).
\end{align*}

\begin{remark}
\label{rem:Gibbs}We note that the method we have described applies even to the
problem of computing integrals with respect to Gibbs measures. Here the
eigenfunction problem involving $\mathcal{L}^{\varepsilon}$ has no zeroth
order term, but the one corresponding to $\mathcal{L}^{\varepsilon,\ast}$
does. Hence the forward process involves no killing, and is simply the
original dynamics, but the backward dynamics do involve killing. When using
Fleming-Viot systems to approximate these processes (before swapping), one is
therefore needed only for the backward process, and we simply simulate $N$
independent copies of the foward process. However, after coupling the systems
through swapping the impact of the killing/cloning is felt by all particles.
We note that in this special case the eigenfunction $\psi^{\varepsilon}(x)$ is
(up to normalization) the density of the Gibbs measure. It then follows from
$V(x)=\frac{1}{2}[\Psi(x)-\Phi(x)]$ and $\psi^{\varepsilon}(x)=e^{-\frac
{1}{\varepsilon}\Psi(x)}=e^{-\frac{2}{\varepsilon}V(x)}$that $\Psi(x)=2V(x)$,
$\Phi(x)=0$, and (up to normalization) $\phi^{\varepsilon}(x)=1$.

In the next section we describe the impact of coupling the forward and
backward dynamics, and give a concrete illustration in the setting of this
remark of how it speeds up the rate of convergence though an \textquotedblleft
implied potential.\textquotedblright\ 
\end{remark}

\subsection{Implied potential}

\label{sec:imppot}

An interesting demonstration of the impact of swapping that is possible when
using the INS limit is indicated by what might be called the \textquotedblleft
implied potential.\textquotedblright\ With the forward and backward processes
independent and scaling by $\varepsilon$, the potential for the pair is
\[
\left[  \Psi(x)+\Phi(y)\right]  ,
\]
so that $\psi^{\varepsilon}(x)\phi^{\varepsilon}(y)=e^{-\frac{1}{\varepsilon
}\left[  \Psi(x)+\Phi(y)\right]  }$. Although we don't know either of
$\Psi(x)$ or $\Phi(y)$ explicitly, they do define the energy landscape that is
associated with the stationary distribution $\psi^{\varepsilon}(x)\phi
^{\varepsilon}(y)dxdy$ we seek. Hence the energy barriers present in this
landscape (restricted to $[0,1]^{d}\times\lbrack0,1]^{d}$) give some idea of
the communication problems that are encountered in sampling. The dynamic for
which this is the stationary distribution are those of $(\bar{X}%
_{t}^{\varepsilon},\bar{Y}_{t}^{\varepsilon})$. One can compare the
communication issues of $(\bar{X}_{t}^{\varepsilon},\bar{Y}_{t}^{\varepsilon
})$ in navigating this potential with those of $(\bar{X}_{t}^{\varepsilon
,\infty},\bar{Y}_{t}^{\varepsilon,\infty})$ in navigating the implied
potential. As we have seen the stationary distribution of $(\bar{X}%
_{t}^{\varepsilon,\infty},\bar{Y}_{t}^{\varepsilon,\infty})$ is
\[
\frac{1}{2}\left[  \psi^{\varepsilon}(x)\phi^{\varepsilon}(y)+\psi
^{\varepsilon}(y)\phi^{\varepsilon}(x)\right]  dxdy,
\]
and so up to a constant the implied potential is
\[
-\varepsilon\log\left[  \psi^{\varepsilon}(x)\phi^{\varepsilon}(y)+\psi
^{\varepsilon}(y)\phi^{\varepsilon}(x)\right]  .
\]

Postponing a more detailed analysis of the general case to future work, we see
that in the case of Remark \ref{rem:Gibbs} the potential for the independent
and unswapped forward and backward processes is $[V(x)+1]$, which is the same
as $V(x)$, and that of the INS process is
\[
-\varepsilon\log\left[  \psi^{\varepsilon}(x)+\psi^{\varepsilon}(y)\right]
=-\varepsilon\log\left[  e^{-\frac{2}{\varepsilon}V(x)}+e^{-\frac
{2}{\varepsilon}V(y)}\right]  .
\]
When $\varepsilon>0$ is small this is approximately $W(x,y)\doteq
\min\{V(x),V(y)\}$.

\begin{example}\label{ex:periodicdiffusion_theory}
Consider the periodic diffusion
\[
dX_{t}=\sin2\pi X_{t}dt+\sqrt{2\varepsilon}dW
\]
on $[-1,1]$. 
The stationary distribution $\psi^{\varepsilon}(x)$ (note that
there is no killing/cloning in the forward direction) has two local max and
min and is proportional to $e^{-\frac{1}{2\pi\varepsilon}\cos2\pi x}$,
while $\phi^{\varepsilon}(y)=1$. 
Thus
\[
\Psi(x)=\frac{1}{2\pi}\cos2\pi x\text{ and }\Phi(y)=0
\]
and therefore
\[
V(x)=\frac{1}{2\pi}\cos2\pi x.
\]
Figure \ref{fig:1} plots $\left[  \Psi
(x)+\Phi(y)\right]  $ when $\varepsilon=0.1$.
We see there is an obvious barrier separating the two local minimum sets in Figure \ref{fig:1}.
\begin{figure}[h!]
    \centering
    \includegraphics[width=0.5\linewidth]{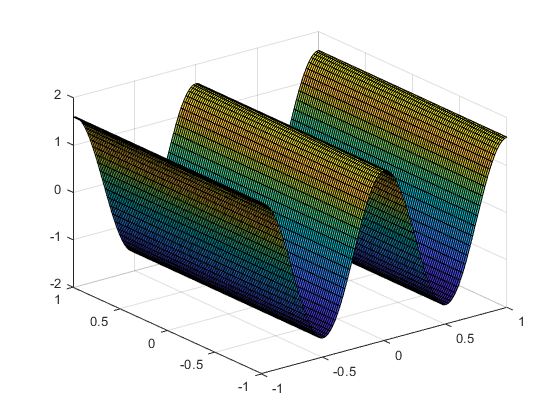}
    \caption{Original potential}
    \label{fig:1}
\end{figure}

For comparison, Figure \ref{fig:2} gives
the implied potential for the same value of $\varepsilon$.
Note that there is no apparent barrier in Figure \ref{fig:2}.
\begin{figure}[h!]
    \centering
    \includegraphics[width=0.5\linewidth]{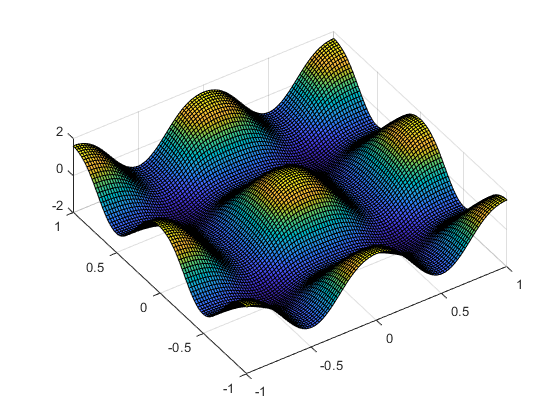}
    \caption{Implied potential}
    \label{fig:2}
\end{figure}

To examine the implied greater communication properties in more detail, we consider separately the diffusion dynamics and killing/cloning dynamics
%
of the 
symmetrized INS dynamics.
Let $(x,y)$ be a point in
$[-1,1]^{2}$. Recall that
\[
F^{\varepsilon}(x,y)=\frac{e^{-\frac{1}{\varepsilon}\left[  \Psi
(x)+\Phi(y)\right]  }}{e^{-\frac{1}{\varepsilon}\left[  \Psi(x)+\Phi
(y)\right]  }+e^{-\frac{1}{\varepsilon}\left[  \Psi(y)+\Phi(x)\right]  }}.
\]
Assume that
\[
\Psi(x)+\Phi(y)<\Psi(y)+\Phi(x)
\]
or
\[
\Psi(x)<\Psi(y).
\]
Then since $\varepsilon>0$ is small
\[
F^{\varepsilon}(x,y)\approx1\text{ and }F^{\varepsilon}(y,x)\approx0.
\]
Recall that the INS dynamics are
\begin{align*}
d\bar{X}_{t}^{\varepsilon,\infty}  &  =\left[  F^{\varepsilon}(\bar{Y}%
_{t}^{\varepsilon,\infty},\bar{X}_{t}^{\varepsilon,\infty})-F^{\varepsilon
}(\bar{X}_{t}^{\varepsilon,\infty},\bar{Y}_{t}^{\varepsilon,\infty})\right]
DV(\bar{X}_{t}^{\varepsilon,\infty})dt+\sqrt{2\varepsilon}dW\\
d\bar{Y}_{t}^{\varepsilon,\infty}  &  =\left[  F^{\varepsilon}(\bar{X}%
_{t}^{\varepsilon,\infty},\bar{Y}_{t}^{\varepsilon,\infty})-F^{\varepsilon
}(\bar{Y}_{t}^{\varepsilon,\infty},\bar{X}_{t}^{\varepsilon,\infty})\right]
DV(\bar{Y}_{t}^{\varepsilon,\infty})dt+\sqrt{2\varepsilon}d\bar{W},
\end{align*}
with $DV(\bar{X}_{t}^{\varepsilon,\infty})=\frac{1}{2}\sin2\pi x$, and the $x$
component (resp., $y$ component) is killed at rate
\[
-F^{\varepsilon}(\bar{Y}_{t}^{\varepsilon,\infty},\bar{X}_{t}^{\varepsilon
,\infty})c^{\ast}(\bar{X}_{t}^{\varepsilon,\infty}),\quad(\text{resp.,
}-F^{\varepsilon}(\bar{X}_{t}^{\varepsilon,\infty},\bar{Y}_{t}^{\varepsilon
,\infty})c^{\ast}(\bar{Y}_{t}^{\varepsilon,\infty})).
\]
So near a point with $\Psi(x)<\Psi(y)$ the dynamics are approximately
\begin{align*}
d\bar{X}_{t}^{\varepsilon,\infty}  &  =-\frac{1}{2}\sin\left(  2\pi\bar{X}%
_{t}^{\varepsilon,\infty}\right)  dt+\sqrt{2\varepsilon}dW\\
d\bar{Y}_{t}^{\varepsilon,\infty}  &  =\frac{1}{2}\sin(2\pi\bar{Y}%
_{t}^{\varepsilon,\infty})dt+\sqrt{2\varepsilon}d\bar{W},
\end{align*}
and the $x$ component (resp., $y$ component) is killed/cloned at rate
approximately
\[
0,\quad(\text{resp., }-\pi\cos(2\pi\bar{Y}_{t}^{\varepsilon,\infty})).
\]

The partition of $[-1,1]^2$ according to $F^{\varepsilon}(x,y)\approx1\text{ and }F^{\varepsilon}(y,x)\approx0$ or the reverse produces the following figure, where red denotes the drift of the diffusion when $F^{\varepsilon}(x,y)\approx1$ and blue the drift when $F^{\varepsilon}(x,y)\approx0$.

\begin{figure}[h!]
    \centering
    \includegraphics[width=0.5\linewidth]{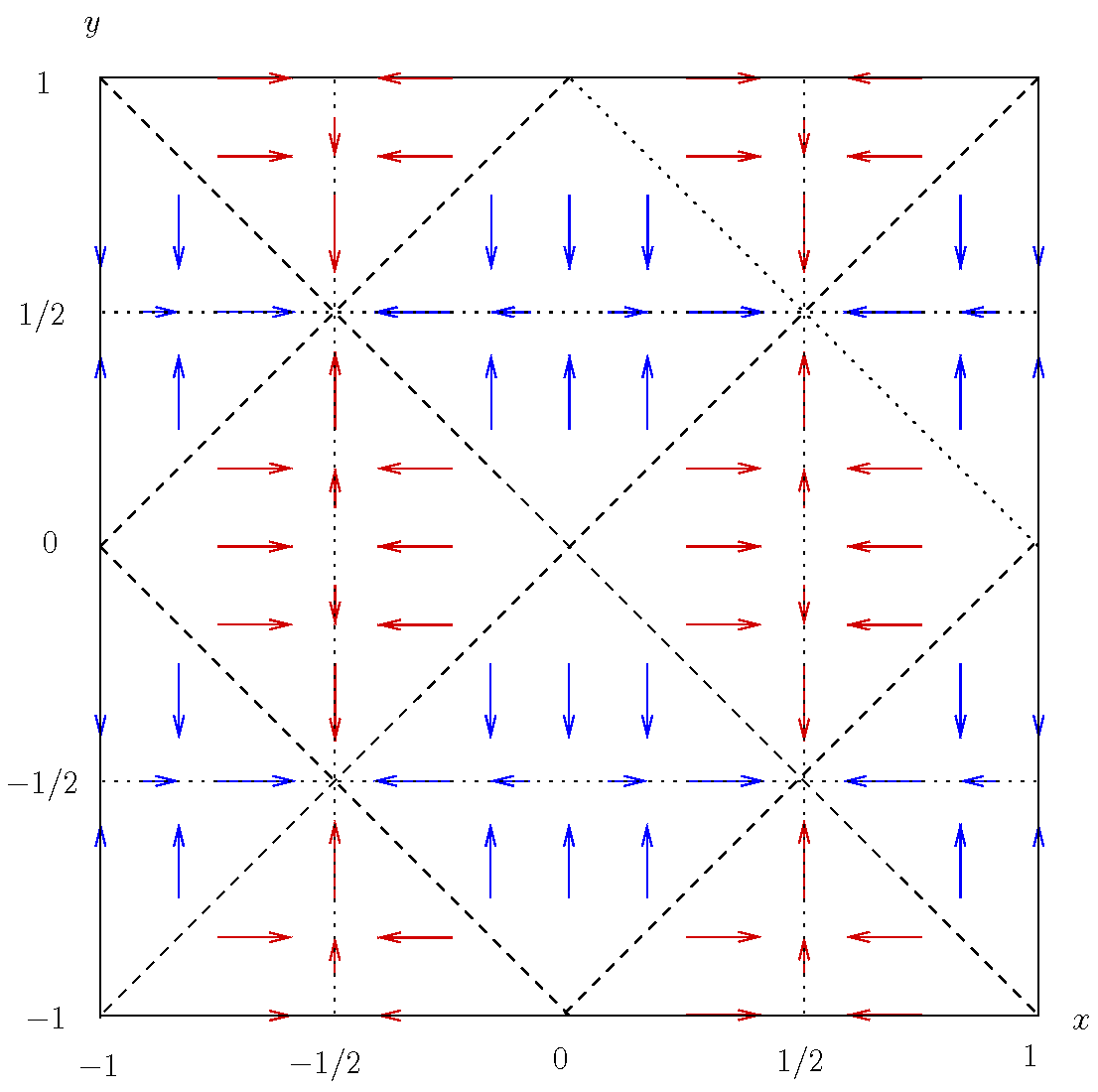}
    \caption{Diffusion drift}
    \label{fig:3}
\end{figure}

As indicated in Figure \ref{fig:2}, when $\varepsilon>0$ is small the points linking
$\{(\pm1/2,\pm1/2)\}$ are all nearly global minimum points of the implied potential.

In the absence of
killing/cloning, the diffusion dynamics act to push the process towards the
corner points $\{(\pm1/2,\pm1/2)\}$, and points of the form $\{(0,\pm1/2)\}$ or $\{(\pm1/2,0)\}$ are unstable equilibrium points of the noiseless dynamics. 
For example, if $x=-1/2$ then $\Psi(x)<\Psi(y)$ holds
for all points $y$ save $y\in\{\pm1/2\}$, where there is equality. Hence if
$-1/2<y<1/2$, then the diffusion dynamics of $\bar{X}_{t}^{\varepsilon,\infty
}$ keep the process near to $1/2$, while those of $\bar{Y}_{t}^{\varepsilon
,\infty}$ are pushing away from the unstable point $y=0$ towards $y=\pm1/2$. 
This indicates that the points $\{(\pm1/2,\pm1/2)\}$ are quasistable points under the diffusion dynamics alone. 
By comparing the original diffusion dynamics with those of the INS model,
we see that the barrier height between regions of local minima (original dynamics) and the implied barrier height for just the diffusion dynamics alone (INS dynamics) is reduced by a factor of 2.

To understand why all points on the square square connecting $\{(\pm1/2,\pm1/2)\}$ are approximate local minima of the implied potential, 
one must bring in the effect of the
killing/cloning dynamics. Suppose we consider a neighborhood of the set $\left\{
(x,y):x=-1/2,-1/2<y<1/2\right\}  $. 
Since the diffusion dynamics drive the
$y$-component away from $y=0$ and towards $y=\pm1/2$, the killing/cloning must
counteract this effect. Indeed this is exactly what occurs, since cloning,
which effectively kills the process elsewhere and shifts corresponding mass to the cloning site, is largest near
$y=0$. At the same time killing is greatest near $y=\pm1/2$, and then the mass
is redistributed according to the current distribution, to ultimately produce the (approximately uniform)
distribution on the line segment.


\end{example}

Continuing the discussion for the general case of $d=1$, one can check that if $V$ has
several local minima, then these local minima are directly linked in the INS
landscape, in that there is no energy barrier between the corresponding
two-dimensional minima. In fact a local minimum in the one dimensional setting
can correspond to an inflection point with respect to the two dimensional INS
potential. Consider for example the case where $V$ has exactly two local
minima $x_{a},x_{b}\in(0,1)$, with $V(x_{a})<V(x_{b})$, and without loss
$x_{a}<x_{b}$. Then $(x_{a},x_{a})$ is a global minimizer
of $W(x,y)=\min\{V(x),V(y)\}$. However, $(x_{b},x_{b})$ is not even a local
minimizer, since
\[
W(x_{b},y)\leq W(x_{b},x_b)\text{ for }x_{a}\leq y\leq x_{b}.
\]
Since $W(x_{b},x_{a})=W(x_{a},x_{a})$, we see it is possible to travel from
$(x_{a},x_{a})$ to $(x_{b},x_{b})$ along a path where the $W$ never exceeds
$W(x_{b},x_{b})$. 
This is a very strong statement concerning the improved communication properties of the INS process with fwd/bkwd swapping.
For comparison, the implied
potential for a two \textit{temperature} implementation of INS is calculated
in \cite{pladoldupliuwangub}, and it confirms lowered barrier heights for transitions between local
minima in comparison to no swapping. This corresponds to a reduction in
required simulation time on the order of $e^{c/\varepsilon}$ for some $c>0$.
However, the INS limit of forward/backward swapping \textit{eliminates} the
barrier for two local minima of the same value. 
This is true only for the nonlinear Markov model,
and as we will see one must also consider the impact of approximating the nonlinear problem by an interacting particle system with a finite number of particles.
In this setting,
if say all the particles are started at a point of the form $(-1/2,-1/2)$,
then to get to $(-1/2,1/2)$ or to populate a neighborhood of $(-1/2,0)$,
killing/cloning cannot play a role until one or more particles have moved away from $(-1/2,-1/2)$,
and so there is a (possibly strong) dependence on the number of particles.
Precisely quantifying the degree of improved communication,
both for the nonlinear model as well as its finite particle approximation,
is an interesting
theoretical question.

\section{Fleming-Viot approximation to the nonlinear diffusion}
\label{sec:fleming-viot}

In the last section we outlined how a coupled pair of NLMPs could be used to
define a Monte Carlo approximation to the solution of \eqref{eqn:QSD}.
However, since the dynamics of a NLMP depend on the current distribution
of the process, an approximation is needed to make implementation feasible. A
very natural approximation is in terms of what is called a Fleming-Viot
process. In this section we first describe the approximation for the forward
dynamics as defined in Section \ref{sec:nonlinearMPswap}, then indicate how
swapping will take place between the approximations for the forward and
backward processes, and lastly identify the infinite swapping limit.

\subsection{Approximation of the forward NLMP}

For convenience we first recall the definition of this process, which was
denoted $\bar{X}_{t}^{\varepsilon}$ in Section \ref{sec:nonlinearMPswap}.
Between killing events, $\bar{X}_{t}^{\varepsilon}$ evolves according to the
same dynamics as (\ref{eqn:SDE}), which means $d\bar{X}^{\varepsilon}%
=-DV(\bar{X}^{\varepsilon})dt+\sqrt{\varepsilon}dW$. For any $t>0$, let
$\psi_{t}^{\varepsilon}(x)dx=P\{\bar{X}_{t}^{\varepsilon}\in dx\}$, and recall
that all quantities are periodic and therefore without loss we can assume for
$x\in\lbrack0,1]^{d}$. Finally, the process is killed at rate $c(\bar{X}%
_{t}^{\varepsilon})$, and when killed it is instantly reborn with a new
location selected according to $\psi_{t}^{\varepsilon}(x)dx$.

The idea of the approximation is to use $N$ particles whose dynamics are
also of the form of (\ref{eqn:SDE}):%
\[
d\bar{X}_{t}^{N,n,\varepsilon}=-DV(\bar{X}_{t}^{N,n,\varepsilon}%
)dt+\sqrt{\varepsilon}dW_{t}^{(n)},\, n = 1, \ldots, N
\]
where $\{W^{(n)},n=1,\ldots,N\}$ is a collection of independent standard
$d$-dimensional Brownian motions. The processes all start with an independent
copy $\bar{X}_{0}^{N,n,\varepsilon}$ of $X_{0}^{\varepsilon}$. There is also a
collection of i.i.d. standard mean $1$ exponential random variables $\{\tau
^{(n)},n=1,\ldots,N\}$. The processes will interact through killing and
rebirth, in the following way. Letting $j$ denote the unique index for which
\[
\inf\left\{  t\geq0:\int_{0}^{t}c(\bar{X}_{s}^{N,j,\varepsilon})ds\geq
\tau^{(j)}\right\}  =\min_{1\leq n\leq N}\inf\left\{  t\geq0:\int_{0}%
^{t}c(\bar{X}_{s}^{N,n,\varepsilon})ds\geq\tau^{(n)}\right\}  ,
\]
the $j$th process is killed at time $\tau^{(j)}$. Next, an index $k$ is
selected from the uniform distribution on $\{1,\ldots,N\}$, and the particle
is then reborn with state $\bar{X}_{\tau^{(j)}-}^{N,k,\varepsilon}$, where the
minus in $\tau^{(j)}-$ indicates that the left hand limit was used. Note that
the new state of the killed particle is selected according to the empirical
distribution
\[
\frac{1}{N}\sum_{k=1}^{N}\delta_{\bar{X}_{\tau^{(j)}-}^{N,k,\varepsilon}%
}(dx).
\]
After the new state of particle $j$ has been selected, a new clock (i.e.,
exponential random variable) is generated for that particle, all other
particles' clocks are reduced by $\tau^{(j)}$, and the process repeats.

It is easy to confirm that since $c(x)$ is bounded the killings will not pile
up, and therefore the process is well defined w.p.1 for all $t\in
\lbrack0,\infty)$. The particles are in fact exchangeable, i.e., have the same
conditional distribution on path space given all the other particles
histories. Suppose we select one of the particles (say $n=1$), and consider
\[
\left(  \bar{X}_{t}^{N,1,\varepsilon},\frac{1}{N}\sum_{n=1}^{N}\delta_{\bar
{X}_{t}^{N,n,\varepsilon}}(dx)\right)  _{t=0}^{T}.
\]
As $N\rightarrow\infty$, this pair of random processes converges in
distribution to
\[
\left(  \bar{X}_{t}^{\varepsilon},\psi_{t}^{\varepsilon}(x)dx\right)
_{t=0}^{T},
\]
the NLMP and its density. 

Of course an analogous construction for the backward NLMP can be done, with
the only difference being that $DV$ is replaced by $-DV$ and $c$ by $\bar
{c}=c-c^{\ast}$ (recall that we have assumed the latter to be non-negative).

\subsection{Swapping of forward and backward FV systems}

With approximations to $\bar{X}_{t}^{\varepsilon}$ and $\bar{Y}_{t}%
^{\varepsilon}$, we can spell out how swapping is implemented. One could allow
any forward particle to be swapped with any backward particle (and such a
scheme should be examined at some point), but we will use what appears to be
the simplest for our immediate purposes. We assume that particles are paired
for life, and only the $n$th forward and backward particles can change states.
There will be as before two versions--location swapping and dynamics swapping,
and the latter will be needed for the INS limit.

To reduce notation we temporarily suppress $\varepsilon$ and $N$ in the
superscript, and let $K$ scale the swap rate. We let the location of the $N$
forward processes be denoted $\bar{X}^{(n),K}$ and the backward ones $\bar
{Y}^{(n),K}$. With swapping only allowed between $\bar{X}^{(n),K}$ and
$\bar{Y}^{(n),K}$, we have the following transitions: killing/cloning moves of
the form%
\begin{align*}
&  (\bar{X}^{(1),K},\ldots,\bar{X}^{(j),K},\ldots,\bar{X}^{(N),K}),(\bar
{Y}^{(1),K},\ldots,\bar{Y}^{(N),K})\\
&  \quad\rightarrow(\bar{X}^{(1),K},\ldots,\bar{X}^{(m),K},\ldots,\bar
{X}^{(N),K}),(\bar{Y}^{(1),K},\ldots,\bar{Y}^{(N),K})
\end{align*}
at rate $c(\bar{X}_{t}^{(j),K})$, $j\in\{1,\ldots,N\}$, where $m\in
\{1,\ldots,N\}$ is uniform and independent of all random variables used to
construct the process up till this time, and
\begin{align*}
&  (\bar{X}^{(1),K},\ldots,\bar{X}^{(N),K}),(\bar{Y}^{(1),K},\ldots,\bar
{Y}^{(k),K},\ldots,\bar{Y}^{(N),K})\\
&  \quad\rightarrow(\bar{X}^{(1),K},\ldots,\bar{X}^{(N),K}),(\bar{Y}%
^{(1),K},\ldots,\bar{Y}^{(l),K},\ldots,\bar{Y}^{(N),K})
\end{align*}
at rate $\bar{c}(\bar{Y}_{t}^{(k),K})$, $k\in\{1,\ldots,N\}$, where
$l\in\{1,\ldots,N\}$ is uniform; swapping moves of the form
\begin{align*}
&  (\bar{X}^{(1),K},\ldots,\bar{X}^{(j),K},\ldots,\bar{X}^{(N),K}),(\bar
{Y}^{(1),K},\ldots,\bar{Y}^{(j),K},\ldots,\bar{Y}^{(N),K})\\
&  \quad\rightarrow(\bar{X}^{(1),K},\ldots,\bar{Y}^{(j),K},\ldots,\bar
{X}^{(N),K}),(\bar{Y}^{(1),K},\ldots,\bar{X}^{(j),K},\ldots,\bar{Y}^{(N),K})
\end{align*}
at rate $Ke^{-(2V(\bar{Y}^{(j),K})-2V(\bar{X}^{(j),K}))^{+}}$, $j\in
\{1,\ldots,N\}$. One then uses
\[
\frac{1}{NT}\int_{0}^{T}\sum_{n=1}^{N}1_{\{dx\}}(\bar{X}_{t}^{(n),K})dt
\]
to approximate $\psi^{\varepsilon}(x)dx$ and
\[
\frac{1}{NT}\int_{0}^{T}\sum_{n=1}^{N}1_{\{dy\}}(\bar{Y}_{t}^{(n),K})dt
\]
for $\phi^{\varepsilon}(y)dy$.

\subsection{Infinite swapping limit}

As with the limit NLMPs, to take the limit $K\rightarrow\infty$ requires that
we switch from location swapping to dynamics swapping and use a variable that
records the role of each component of particle pairs. As before we abuse
notation and keep $\bar{X}_{t}^{(n),K}$ and $\bar{Y}_{t}^{(n),K}$ to designate
the components regardless of their fwd/bkwd role, and use $S_{t}^{(n),K}%
\in\{1,2\}$ for this purpose. When $S_{t}^{(n),K}=1$ $\bar{X}_{t}^{(n),K}$ is
the forward in time particle (with forward dynamics and killing rate), and
when $S_{t}^{(n),K}=2$ it is backward. Thus the FV-based estimator for
$\psi^{\varepsilon}(x)\phi^{\varepsilon}(y)dxdy$ is
\begin{align} \label{eq: empirical_distribution}
\frac{1}{NT}\int_{0}^{T}\sum_{n=1}^{N}\left[  \delta_{\{(\bar{X}_{t}%
^{(n),K},\bar{Y}_{t}^{(n),K})\}}(dx,dy)1_{\{1\}}(S_{t}^{(n),K})+\delta
_{\{(\bar{Y}_{t}^{(n),K},\bar{X}_{t}^{(n),K})\}}(dx,dy)1_{\{2\}}(S_{t}%
^{(n),K})\right]  dt.
\end{align}

This would be asymptotically unbiased, i.e., we would have the large $T$ limit
$\psi^{\varepsilon}(x)\phi^{\varepsilon}(y)dxdy$ w.p.1, if the stationary
distribution of each of the particles $(\bar{X}_{t}^{(n),K},\bar{Y}%
_{t}^{(n),K})$ was exactly $\psi^{\varepsilon}(x)\phi^{\varepsilon}(y)dxdy$.
However, there is a finite $N$ bias that vanishes as $N\rightarrow\infty$.
A formal large deviations analysis suggests that the bias decays exponentially in $N$,
and whether or not this can be made rigorous is an interesting open question.

Accepting this bias, we now send $K\rightarrow\infty$ as before. The resulting
processes and estimators take the following form. Between killings the
processes $(\bar{X}_{t}^{(n),\infty},\bar{Y}_{t}^{(n),\infty})$ follow the
dynamics
\begin{align*}
d\bar{X}_{t}^{(n),\infty}  &  =\left[  F^{\varepsilon}(\bar{Y}_{t}%
^{(n),\infty},\bar{X}_{t}^{(n),\infty})-F^{\varepsilon}(\bar{X}_{t}%
^{(n),\infty},\bar{Y}_{t}^{(n),\infty})\right]  DV(\bar{X}_{t}^{(n),\infty
})dt+\sqrt{\varepsilon}dW_{t}^{(n)}\\
d\bar{Y}_{t}^{(n),\infty}  &  =\left[  F^{\varepsilon}(\bar{X}_{t}%
^{(n),\infty},\bar{Y}_{t}^{(n),\infty})-F^{\varepsilon}(\bar{Y}_{t}%
^{(n),\infty},\bar{X}_{t}^{(n),\infty})\right]  DV(\bar{Y}_{t}^{(n),\infty
})dt+\sqrt{\varepsilon}d\bar{W}_{t}^{(n)},
\end{align*}
where $F^{\varepsilon}$ has been defined in \eqref{eqn:defofF}. The particles
are killed at rates
\[
c(\bar{X}_{t}^{(n),\infty})-F^{\varepsilon}(\bar{Y}_{t}^{(n),\infty},\bar
{X}_{t}^{(n),\infty})c^*(\bar{X}_{t}^{(n),\infty}),\quad c(\bar{Y}%
_{t}^{(n),\infty})-F^{\varepsilon}(\bar{X}_{t}^{(n),\infty},\bar{Y}%
_{t}^{(n),\infty})c^*(\bar{Y}_{t}^{(n),\infty})
\]
for $n=1,\ldots,N$. When a particle is killed we use the following rule to decide where it will be reborn.

\begin{enumerate}
\item Determine which particle from among
\[
\left\{  \bar{X}_{t-}^{(k),\infty},\bar{Y}_{t-}^{(k),\infty},k=1,\ldots
,N\right\}
\]
is to be killed. Assume for the next step that it is $\bar{X}_{t-}%
^{(n),\infty}$, and note that we follow the analogous procedure if it is
$\bar{Y}_{t-}^{(n),\infty}$.

\item Determine if $\bar{X}_{t-}^{(n),\infty}$ is in the forward or backward
state according to the probabilities
\[
(F^{\varepsilon}(\bar{X}_{t-}^{(n),\infty},\bar{Y}_{t-}^{(n),\infty
}),F^{\varepsilon}(\bar{Y}_{t-}^{(n),\infty},\bar{X}_{t-}^{(n),\infty})),
\]
for the (fwd,bkwd) assignment. Assume for the next step that it is bkwd, and
follow the analogous procedure if it is fwd.

\item With probability $1/N$ leave the particle where it is.

\item With probability $(N-1)/N$ sample from
\[
\frac{1}{N-1}\sum_{k=1,k\neq n}^{N}\left[  F^{\varepsilon}(\bar{Y}%
_{t-}^{(k),\infty},\bar{X}_{t-}^{(k),\infty})\delta_{\bar{X}_{t-}^{(k),\infty
}}(dx)+F^{\varepsilon}(\bar{X}_{t-}^{(k),\infty},\bar{Y}_{t-}^{(k),\infty
})\delta_{\bar{Y}_{t-}^{(k),\infty}}(dx)\right]
\]
to get the new location of the particle.
\end{enumerate}

Note that in step 4 we are weighting $\bar{X}_{t-}^{(k),\infty}$ by the
probability that it is a backward particle, and similarly for $\bar{Y}%
_{t}^{(n),\infty}$.

With the processes constructed, the estimator is then%
\begin{align*}
&  \frac{1}{NT}\int_{0}^{T} \sum_{n=1}^{N} \left[  F^{\varepsilon}(\bar{X}%
_{t}^{(n),\infty},\bar{Y}_{t}^{(n),\infty})\delta_{(\bar{X}_{t}^{(n),\infty
},\bar{Y}_{t}^{(n),\infty})}(dx\times dy) \right. \\
&  \mbox{} \qquad\qquad\qquad\left.  +F^{\varepsilon}(\bar{Y}_{t}^{(n),\infty
},\bar{X}_{t}^{(n),\infty})\delta_{(\bar{Y}_{t}^{(n),\infty},\bar{X}%
_{t}^{(n),\infty})}(dx\times dy)\right]  dt. \label{eqn:infiniteN}%
\end{align*}

\section{Algorithm using pure jump processes}
\label{sec:pure_jump}
 The Fleming-Viot system is a collection of interacting jump-diffusion processes, where the jumps are due to the killing and cloning of the particles. The killing and cloning processes are simple to simulate through exponential random variables.
 However, the diffusion process needs to be simulated through some numerical approximation. Diffusion processes are typically approximated through time-marching schemes, such as Euler-Maruyama in which the state is updated at deterministic time intervals. The nature of this type of approximation is quite different from jump processes where jump events occur at exponentially distributed random times. In this section we present a discretization of the infinitely swapped Fleming-Viot system where the diffusion process is approximated by a \emph{pure jump process}. This allows us to treat both killing-cloning events and the diffusion process through the same approximation scheme. 
 
\subsection{Pure jump process approximation of diffusion processes}
The pure jump process approximation of diffusion processes are based on the Markov chain method introduced by H.J. Kushner in \cite{kus77} and further developed in \cite{kusdup1}. In contrast to its standard use, however, we need not be concerned that the process take values in a grid\footnote{The Markov chain approximation method was originally motivated by approximations for solutions to linear and nonlinear PDEs based on probabilistic representations for the solution. A fixed grid was needed since the approximate solutions would often be characterized as a fixed point of a mapping on functions defined on this grid. }. Although the Markov chain method can be applied much more generally, for the
purposes of this paper we continue with just the process models relevant to
solving the problem introduced in Section \ref{sec:2}. Recall that the
generator of the process (\ref{eqn:SDE}) is%
\[
\mathcal{L}^{\varepsilon}f(x)=-\left\langle DV(x),Df(x)\right\rangle
+\frac{\varepsilon}{2}\text{tr}[D^{2}f(x)]
\]
for $f\in C_{c}^{2}(\mathbb{R}^{d})$. Following the usual recipe for
characterizing a good approximating chain, we say that a given finite
collection of vectors $\{v_{i}(x),i\in I\}$, discretization parameter $h>0$,
and a collection of jump rates $\{q_{i}(x),i\in I\}$ are \textit{locally
consistent at} $x$ if for each $f\in C_{c}^{2}(\mathbb{R}^{d})$
\begin{equation}
\label{eqn:loccons}\mathcal{L}^{\varepsilon,h}f(x)\doteq\sum_{i\in I}%
q_{i}(x)\left[  f(x+hv_{i}(x))-f(x)\right]  =\mathcal{L}^{\varepsilon}f(x)
+o(1),
\end{equation}
where $o(1)$ is a quantity $\eta^{h}(x)$ such that $\sup_{x\in\lbrack0,1]^{d}%
}\left\vert \eta^{h}(x)\right\vert \rightarrow0$ as $h\rightarrow0$. The
vectors and rates would usually be continuous in $x$, but measurability is
sufficient so long as the indicated uniformity holds for the error term.

\begin{example}
\label{exa:1}A standard construction when $v_{i}(x)$ are of the form $\left\{
\pm e_{k},k=1,\ldots,d\right\}  $, with $e_{k}$ the $k$-th unit basis vector is to use the upwinding scheme
\begin{align*}
q_{k}^{1}  &  =\frac{1}{h}(-DV(x)_{k})^{+}\text{ with }v_{k}^{1}(x)=e_{k}\\
q_{k}^{2}  &  =\frac{1}{h}(-DV(x)_{k})^{-}\text{ with }v_{k}^{2}(x)=-e_{k}%
\end{align*}
with $a^{+}=\max\{0,a\}$ and $a^{-}=-\min\{0,a\}$ to approximate the drift,
and
\begin{align*}
q_{k}^{3}  &  =\frac{\varepsilon}{2h^{2}}\text{ with }v_{k}^{3}(x)=e_{k}\\
q_{k}^{4}  &  =\frac{\varepsilon}{2h^{2}}\ \text{ with }v_{k}^{4}(x)=-e_{k}%
\end{align*}
for the Brownian motion. Indeed we have in that case (with the obvious index
set $I$)
\begin{align*}
\mathcal{L}^{\varepsilon,h}f(x)\doteq &  \sum_{j,k\in I}q_{k}^{j}(x)\left[
f(x+hv_{k}^{j}(x))-f(x)\right] \\
=  &  \,\sum_{k=1}^{d}\frac{1}{h}(-DV(x)_{k})h\frac{\partial}{\partial x_{k}%
}f(x)+\sum_{k=1}^{d}\frac{\varepsilon}{2h^{2}}h^{2}\frac{\partial^{2}%
}{\partial x_{k}^{2}}f(x)+o(1)\\
=  & \, \mathcal{L}^{\varepsilon}f(x)+o(1).
\end{align*}
We repeat that we do not use in any way that these chains take values in a
grid, and in fact the vectors and rates could depend on time and even be
random in a suitable way, so long as \eqref{eqn:loccons} holds.
\end{example}

Note that one must pay attention to the interplay between the discretization
parameter $h$ and the noise coefficient $\varepsilon$, but this would be true
with any discretization used for the purpose of sample generation. For the
numerical examples that are presented later we use the chain of Example
\ref{exa:1} since it is very simple to implement for the problems of Section
\ref{sec:2}.  

\vspace{\baselineskip} {\textbf{Dealing with $c(x)<0$.}} In the last section
it was explained how one can approximate the NLMPs by interacting particle
systems, with particles grouped into forward/backward pairs that remain linked
throughout the simulation. It was also spelled out how to construct estimators
for integrals with respect to the eigenfunctions and eigenvalue. In all the
development so far we have assumed that $c(x)\geq0$ and $\bar{c}(x) \geq0$.
Since $\mathcal{X}=[0,1)^{d}$, this could be assumed without loss if not true
by simply adding a constant to both $c$ and $\lambda^{\varepsilon}$ to make
the inequalities hold, and then removing the constant after numerical
approximation is complete. However, since the behavior of the scheme may
depend on the constant, it is also useful to know that one can deal more
directly with the case $c(x)<0$, which we now explain.

For the case $c(x)\geq0$ we can interpret the the zeroth order term in
\eqref{eqn:QSD} as a killing rate. To renormalize so that the mass associated
with the NLMP used in its representation does not decrease, the process is
reborn with state selected according to the current distribution of the NLMP.
We refer to this as ``killing and then cloning,'' since it is the killing
event that triggers the cloning event. When $c(x)<0$ in some parts of
$\mathcal{X}$, the analogous behavior is to clone particles located at such a
state $x$ with rates $-c(x)$, and then kill a particle according to the
current distribution of the NLMP. Hence we call this ``cloning and then
killing.'' One can give a precise formulation in terms of a NLMP, but for our
purposes it is enough to observe that the implementation in terms of the FV
process is obvious: for particles in locations where $c<0$ cloning events will
be generated according to this rate, after which a particle is selected for
killing according to the uniform distribution on particle indicies. Just as
with $c>0$, this must be modified according to the probability of finding
particles in fwd/bkwd states for the INS model. \vspace{\baselineskip}

We now present the final form of the algorithm, in which we allow for both
orders of killing and cloning, and also substitute the pure jump
approximations for the diffusion processes. The resulting process, in both its
finite swapping rate and infinite swapping form, are pure jump processes.
Before passing to the INS model we have the following form.  

\begin{itemize}
\item Jumps that correspond to individual particle dynamics through
approximation to the SDE.

\item Jumps that correspond to killing and then cloning or cloning and then
killing in the FV dynamics.

\item Jumps that correspond to swapping locations of a forward/backward
particle pair. This is done just as in the diffusion case of the last section.
\end{itemize}

After passing to the INS limit forward and backward particles lose their identity, the
empirical measure is replaced by a sum of two weighted empirical measures, and
the remaining dynamics are also weighted in the same way to reflect fractions
of time a given particle is acting as a forward particle versus backward.

Using Example \ref{exa:1} as the basis for the construction, we recall
$\mathcal{X}=[0,1)^{d}$ denotes the state space. For $x\in$ $\mathcal{X}$ we
define forward particle dynamic rates
\[
q_{k}^{\pm}(x)=\frac{1}{h^{2}}\left[  h(-DV(x)_{k})^{\pm}+\frac{\varepsilon
}{2}\right]
\]
and backward rates
\[
r_{k}^{\pm}(x)=\frac{1}{h^{2}}\left[  h(DV(x)_{k})^{\pm}+\frac{\varepsilon}%
{2}\right]
\]
for $k=1,\ldots,d$. Note that with both cloning and then killing and killing
and then cloning available, for $\boldsymbol{x}=(x_{1},\ldots,x_{N}%
)\in\mathcal{X}^{N}$ the generator of the forward FV system would take the
form
\begin{align*}
\mathcal{L}^{f,N,h}f(\boldsymbol{x}) &  =\sum_{n=1}^{N}\left(  \sum_{k=1}%
^{d}\sum_{\pm}q_{k}^{\pm}(x_{n})\left[  f(\boldsymbol{x}_{n,\pm}%
^{h})-f(\boldsymbol{x})\right]  \right.  \\
&  \quad\quad\left.  +1_{\{\bar{c}(x_{n})\geq0\}}\sum_{m=1}^{N}\frac{\bar
{c}(x_{n})}{N}\left[  f(\boldsymbol{x}_{n}^{m})-f(\boldsymbol{x})\right]
-1_{\{\bar{c}(x_{n})<0\}}\sum_{m=1}^{N}\frac{\bar{c}(x_{n})}{N}\left[
f(\boldsymbol{x}_{m}^{n})-f(\boldsymbol{x})\right]  \right)  ,
\end{align*}
where $\boldsymbol{x}_{n,\pm}^{h}$ is the same as $\boldsymbol{x}$ except
$x_{n}$ has been replaced by $x_{n}\pm he_{k}$ with $e_{k}$ the $k$th unit
vector in $\mathbb{R}^{d}$, and $\boldsymbol{x}_{n}^{m}$ is the same as
$\boldsymbol{x}$ except $x_{n}$ has been replaced by $x_{m}$. We also recall
that $\bar{c}(x)=$ tr$[D^{2}V(x)]+c(x)$. The construction of the backward FV
system is same but with $q_{k}^{\pm}$ replaced by $r_{k}^{\pm}$ and $\bar{c}$
replaced by $c$.

The only jump events that take place for the INS model correspond to moving
particle dynamics forward according to a convex combination of the fwd/bkwd
dyanmics, and killing/cloning events. We also recall that $F^{\varepsilon
}(x,y)$ has been defined in (\ref{eqn:defofF}), and to define the symmetric
weighted infinite swapping killing/cloning rates let $c^{\varepsilon
}(x,y)=c(x)-F^{\varepsilon}(x,y)$tr$[D^{2}V(x)]$.

A pseudocode for the algorithm obtained in the INS limit is as follows.

\begin{enumerate}
\item First decide the type of the next event. This is done by comparing the
rate for an event with a collection of independent mean one exponential random
variables. When one of the products of the rate and the elapsed time exceeds
the corresponding clock, we know the time and type of the event. The total
rate for a dynamics type event is
\begin{align*}
&  \sum_{n=1}^{N}\left(  F^{\varepsilon}(x_{n},y_{n})\sum_{k=1}^{d}\sum_{\pm
}q_{k}^{\pm}(x_{n})+F^{\varepsilon}(y_{n},x_{n})\sum_{k=1}^{d}\sum_{\pm}%
r_{k}^{\pm}(x_{n})\right. \\
&  \quad\quad\quad\left.  +F^{\varepsilon}(y_{n},x_{n})\sum_{k=1}^{d}\sum
_{\pm}r_{k}^{\pm}(y_{n})+F^{\varepsilon}(x_{n},y_{n})\sum_{k=1}^{d}\sum_{\pm
}q_{k}^{\pm}(y_{n})\right)  ,
\end{align*}
and the total rate for a killing/cloning event is
\begin{align*}
&  \sum_{n=1}^{N}\left(  c^{\varepsilon}(x_{n},y_{n})1_{\{c^{\varepsilon
}(x_{n},y_{n})\geq0\}}-c^{\varepsilon}(x_{n},y_{n})1_{\{c^{\varepsilon}%
(x_{n},y_{n})<0\}}\right. \\
&  \quad\quad\quad\left.  +c^{\varepsilon}(y_{n},x_{n})1_{\{c^{\varepsilon
}(y_{n},x_{n})\geq0\}}-c^{\varepsilon}(y_{n},x_{n})1_{\{c^{\varepsilon}%
(y_{n},x_{n})<0\}}\right)  .
\end{align*}
This identifies $N\times4\times d\times2$ types of dynamics events, and
$N\times4$ types of killing/cloning events. If it is a dynamics event, go to
Step 2, and if killing/cloning, go to Step 3.

\item Since the decomposition of dynamics events was exhaustive, we can simply
execute the indicated move. For example, if the rate that first exceeded its
corresponding clock is say $F^{\varepsilon}(y_{l},x_{l})r_{k}^{-}(x_{l})$,
then we must replace the state $x_{l}$ of particle $l$ by $x_{l}-he_{k}$. We
then subtract the elapsed time off all other clocks, generate a new clock for
the dynamics event that occurred, and return to Step 1.

\item The decomposition of killing/cloning events is not exhaustive, but does
identify which particle pair number is involved, as well as which of the two
is being killed/cloned, and whether the event is killing then cloning or
cloning then killing. For illustrative purposes we suppose the rate that first
exceeded its corresponding clock is $-c^{\varepsilon}(y_{l},x_{l}%
)1_{\{c^{\varepsilon}(y_{l},x_{l})<0\}}$. Then it is a cloning then killing
event, and the particle being cloned is $y_{l}$. We then execute the following.

\begin{enumerate}
\item With probability $1/N$ do not clone the particle.

\item With probability $(N-1)/N$ sample from
\begin{align*}
&  \frac{F^{\varepsilon}(y_{l},x_{l})}{N-1}\sum_{k=1,k\neq l}^{N}\left[
F^{\varepsilon}(x_{k},y_{k})\delta_{\{x_{k}\}}(dx)+F^{\varepsilon}(y_{k}%
,x_{k})\delta_{\{y_{k}\}}(dx)\right] \\
&  \quad+\frac{F^{\varepsilon}(x_{l},y_{l})}{N-1}\sum_{k=1,k\neq l}^{N}\left[
F^{\varepsilon}(y_{k},x_{k})\delta_{\{x_{k}\}}(dx)+F^{\varepsilon}(x_{k}%
,y_{k})\delta_{\{y_{k}\}}(dx)\right]
\end{align*}
and assign $y_{l}$ to be the new state of the selected particle. Note that
$(F^{\varepsilon}(y_{l},x_{l}),F^{\varepsilon}(x_{l},y_{l}))$ determines if
particle $y_{l}$ is determined to be in the fwd or bkwd state.

\item Subtract the time till the event off all other clocks, generate a new
clock for the killing/cloning event that did occur, and return to Step 1.
\end{enumerate}
\end{enumerate}

Having simulated trajectories of this process, the weighted empirical measure
is
\[
\frac{1}{NT}\int_{0}^{T}\sum_{n=1}^{N}\left[  F^{\varepsilon}(\bar{X}%
_{t}^{(n)},\bar{Y}_{t}^{(n)})\delta_{(\bar{X}_{t}^{(n)},\bar{Y}_{t}^{(n)}%
)}(dx\times dy)+F^{\varepsilon}(\bar{Y}_{t}^{(n)},\bar{X}_{t}^{(n)}%
)\delta_{(\bar{Y}_{t}^{(n)},\bar{X}_{t}^{(n)})}(dx\times dy)\right]  dt.
\]

\begin{remark}
\label{rem:jump} The use of a pure jump approximation to the solution of the
SDE for this problem has advantages over an Euler Maruyama approximation. One
is that it allows the other dynamics of the problem, namely the
killing/cloning events and the swapping events, to be integrated into the
approximation seamlessly, and also allows for a pure jump model for the INS
limit. A second advantage appears when applying the method to problems with
Dirichlet boundary conditions. Suppose for example that one considers the
problem of solving the eigenfunction problem on a domain $D\subset
\mathbb{R}^{d}$, with zero boundary condition. Then the implementation of the
boundary condition is very simple, in that jumps that would take the process
outside $D$ are not allowed, though an appropriate cost must be added to the
zeroth order term $c$. This property will be illustrated in future work that
focuses on hard killing.
\end{remark}

In Appendix~\ref{sec:app:pseudocode}, we provide pseudocode for all relevant algorithms for simulating a standard and infinitely--swapped Fleming-Viot system. Algorithms~\ref{alg:sdetransrate} and~\ref{alg:sdetransition} implement the pure jump approximation of the stochastic differential equation. Algorithm~\ref{alg:killclone} implements the killing and cloning events for the infinitely--swapped Fleming--Viot system, and Algorithm~\ref{alg:FVINS} simulates the infinitely--swapped Fleming--Viot system.


\section{Application to approximating solutions of ergodic control problems}

The two eigenvalue problems \eqref{eqn:QSD} and \eqref{eqn:QSDdual} are related to a particular class of ergodic control problems whose optimal controls can be expressed in terms of solutions of these eigenvalue problems. First consider the dual eigenvalue problem \eqref{eqn:QSDdual}; let $\Phi^\varepsilon(x) = - \log \phi^\varepsilon(x)$ and observe that \eqref{eqn:QSDdual} yields
\[
     - \langle D V(x), D \Phi^\varepsilon(x) \rangle e^{-\Phi^\varepsilon(x)} + \frac{\varepsilon}{2}e^{-\Phi^\varepsilon(x)}\left( \text{tr}[ D^2 \Phi^\varepsilon(x) ]- \|D \Phi^\varepsilon(x) \|^2 \right) + c(x) e^{-\Phi^\varepsilon(x)} = \lambda^\varepsilon e^{-\Phi^\varepsilon(x)} 
     \]
     and thus
     \[
      - \langle D V(x), D \Phi^\varepsilon(x) \rangle -\frac{\varepsilon}{2} \|D \Phi^\varepsilon(x) \|^2  + c(x)  + \frac{\varepsilon}{2} \text{tr}[ D^2 \Phi^\varepsilon(x) ] = \lambda^\varepsilon.
\]
The first three terms can be interpreted as a Hamiltonian
\begin{align}
    H(x,p) =  \langle -D V(x),p \rangle -\frac{\varepsilon}{2} \|p \|^2  + c(x),
\end{align}
which can be written as the convex conjugate of a running cost
\begin{align}
    H(x,p) = \min_{u\in \mathbb{R}^d} \left[ - \langle D V(x) + \sqrt{\varepsilon} u,p \rangle   + \frac{1}{2}\|u\|^2+ c(x)\right],
\end{align}
where the running cost is equal to $L(x,u) =\frac{1}{2}\|u\|^2 + c(x)$. We thus find the resulting Hamilton-Jacobi-Bellman (HJB) equation
\begin{align}
    \min_{u\in \mathbb{R}^d} \left[  \langle -D V(x) + \sqrt{\varepsilon} u,D\Phi^\varepsilon(x) \rangle   + \frac{1}{2}\|u\|^2+ c(x)\right] + \frac{\varepsilon}{2}\tr[D^2 \Phi^\varepsilon(x)] = \lambda^\varepsilon.
\end{align}
From \cite{buddupnyqwu} Theorem 2.6, this PDE is that associated with the ergodic control problem with initial condition $x$ given by 
\begin{align}
J(x) =  &\inf_{u} \limsup_{T \to \infty} \frac{1}{T} \mathbb{E}_x \int_0^T \left[\frac{1}{2} \|u_t\|^2 + c(X_t) \right] dt  
\end{align}
subject to 
\[
 dX_t = \left[ -DV(X_t) + \sqrt{\varepsilon}u_t \right] dt + \sqrt{\varepsilon}dW_t, \, X_0 = x. \nonumber
\]
Here the infimum is over a suitable collection of processes $\{u_t,t\geq 0\}$
that are progressively measurable with respect to a filtration that measures the Wiener process.
Moreover, the same theorem states that an optimal time independent feedback control is characterized through the solution of the PDE: $u(x) = -\sqrt{\varepsilon} D \Phi^\varepsilon(x) = \sqrt{\varepsilon}D \log \phi^\varepsilon(x)$, and the eigenvalue provides the optimal cost, $J(x) = \lambda^\varepsilon$ for all initial conditions $x$. 

The eigenvalue problem \eqref{eqn:QSD} yields a similar control problem. One can show that  $\Psi^\varepsilon = -\log \psi^\varepsilon$ satisfies the HJB equation
\begin{align}
    \min_{u\in \mathbb{R}^d} \left[  \langle D V(x) + \sqrt{\varepsilon} u,D\Psi^\varepsilon(x) \rangle   + \frac{1}{2}\|u\|^2+ c(x) + c^\ast(x)\right] + \frac{\varepsilon}{2}\tr[D^2 \Psi^\varepsilon(x)] = \lambda^\varepsilon,
\end{align}
which corresponds to ergodic control problem 
\begin{align}
    J(y) =  &\inf_{u} \limsup_{T \to \infty} \frac{1}{T} \mathbb{E}_y \int_0^T \left[\frac{1}{2} \|u_t\|^2 + c(Y_t) + c^\ast(Y_t) \right] dt 
  \end{align}  
  subject to 
\[
  dY_t = \left[ DV(Y_t) + \sqrt{\varepsilon}u_t \right] dt + \sqrt{\varepsilon}dW_t, \, Y_0 = y. 
  \]
Also analogously, an optimal feedback control is $u(y) = -\sqrt{\varepsilon} D\Psi^\varepsilon(y) = \sqrt{\varepsilon}D\log \psi^\varepsilon(y)$ and the optimal cost is $J(y) = \lambda^\varepsilon$.   

The interacting particle system outlined in this paper produces samples from quasistationary distributions $\psi^\varepsilon(x)$ and $\phi^\varepsilon(x)$. The ergodic control, however, requires evaluation of the densities, in particular, the \emph{score function}. Given only samples from the QSDs, we can apply score-based generative modeling (SGM) tools to estimate the control function. From SGMs, we parametrize a neural net $\mathsf{s}_\theta: \mathbb{R}^d \to \mathbb{R}^d$ and train it through regression against the optimal control
\begin{align}
    \min_\theta \int_{[0,1)^d} \|\mathsf{s}_\theta(x) - D\log \phi^\varepsilon(x) \|^2 \phi^\varepsilon(x) dx.
\end{align}
This least-squares objective cannot be computed in practice. There is, however, an equivalent objective that requires only evaluations of $\mathsf{s}_\theta$ on samples of $\phi^\varepsilon(x)$. Through integration by parts, the ergodic control can be approximated through minimizing
\begin{align}
    \min_\theta \int_{[0,1)^d} \left[ \|\mathsf{s}_\theta(x) \|^2  + 2 \text{div}( \mathsf{s}_\theta(x)) \right] \phi^\varepsilon(x)  dx.
\end{align}
Score-based generative modeling has been applied in high-dimensional settings, suggesting that high-dimensional ergodic control problems can be addressed through a combination of the interacting particle system and score-matching tools.


\section{Numerical examples: Eigenfunctions and quasistationary distributions}

Code relating to the numerical examples here can be found at \\ \texttt{github.com/benjzhang/ergodic\_control\_qsd}. 

\subsection{Sampling from multimodal distributions}
A special case of the eigenvalue problems \eqref{eqn:QSD} and \eqref{eqn:QSDdual} is the case where $\lambda^\varepsilon = 0$, in which case the infinitely swapped Fleming-Viot particle system should approximate the stationary distributions of the Langevin dynamics given by the forward system.

\begin{example} \label{ex:periodicdiffusion_computation}
We revisit the periodic diffusion Example \ref{ex:periodicdiffusion_theory} to empirically evaluate and explore the properties of our forward-backward infinite swapping Fleming-Viot method. We are interested in producing samples from and computing expectations with respect to the Gibbs measure 
\begin{align}
    \rho^\varepsilon(x) \propto \exp\left(- \frac{1}{2\pi\varepsilon} \cos 2\pi x \right), \, x \in [-1,1].
\end{align}
Then $V(x) = \frac{1}{2\pi}\cos(2\pi x)$, and we have the pair of differential equations 
\begin{align}
 \sin(2\pi x) D\psi^\varepsilon(x) - \varepsilon D^2 \psi^\varepsilon(x) + 2\pi \cos(2\pi x) \psi^\varepsilon(x)= 0 \\
-\sin(2 \pi x) D\phi^\varepsilon(x) - {\varepsilon} D^2 \phi^\varepsilon(x) = 0
\end{align}
with corresponding dynamical representations 
\begin{align}
    &dX^\varepsilon(t) = \sin(2\pi X^\varepsilon(t)) dt + \sqrt{2\varepsilon} dW(t), \, c(x) = 0 \\
    &dY^\varepsilon(t) = - \sin(2\pi Y^\varepsilon(t)) dt + \sqrt{2\varepsilon} d\bar{W}(t), \, \bar{c}(y) = \pi  \cos (2\pi y)
\end{align}
Notice that there is no killing or cloning for the forward particle $X^\varepsilon(t)$ while the correction term yields a killing or cloning rate related to the Laplacian of the potential function for the backward particle $Y^\varepsilon(t)$. 

In Figure~\ref{fig:periodicdiff_loweps_lowN}, we see that with low temperatures ($\varepsilon = 0.05$), neither the Langevin dynamics nor the infinitely-swapped Fleming-Viot system are effective at exploring the potential. In the infinite particle limit, the effective potential of the INS system should nearly eliminate the barriers between metastable states. However, with a small number of particles ($N = 5$) the particles exhibit ``herding,'' in which the particles tend to cluster together and prevent any individual particle from effectively exploring the state space.

Herding can be addressed with more particles, as Figure~\ref{fig:periodicdiff_loweps_bigN}, where particles have a better chance of exploring the effective potential, or with higher temperatures, as Figure~\ref{fig:periodicdiff_bigeps_lowN}. The tradeoff of using more particles is that more events will need to be simulated, thereby increasing computational cost, while using a higher temperature does not solve the original problem. 
An alternative to simply increasing $N$ is to, in the spirit of parallel tempering, exchange particle pairs of the INS system with different temperatures.
This option will be considered in future work.


\begin{figure} 
    \centering
    \begin{subfigure}{0.4\textwidth}
    \includegraphics[trim={35pt 11pt 55pt 17pt},clip,width=\linewidth]{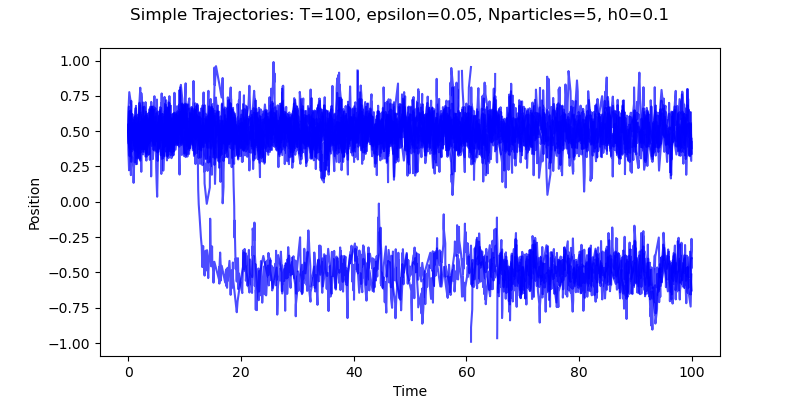}
     \includegraphics[trim={35pt 11pt 55pt 17pt},clip,width=\linewidth]{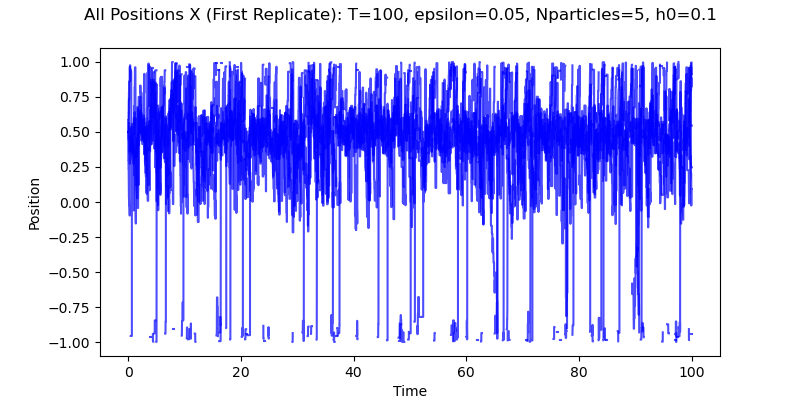}
    \caption{Independent Langevin trajectories (top) do not mix well between metastable states. INS Fleming-Viot system (bottom) with few particles at low temperatures exhibits herding. }
    \end{subfigure}
      \begin{subfigure}{0.58\textwidth}
    \includegraphics[trim={0 25pt 130pt 48pt},clip,width=\linewidth]{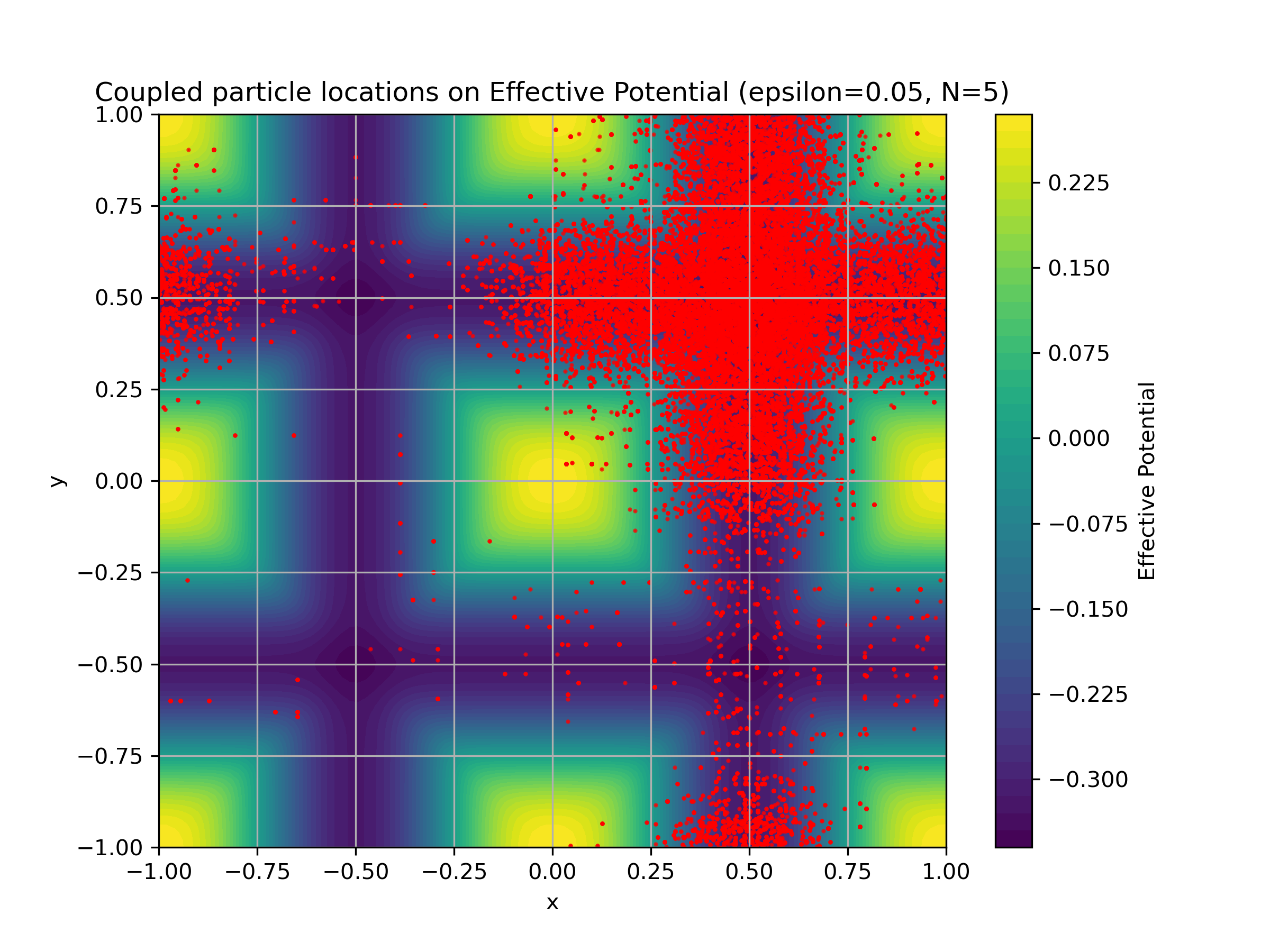}
    \caption{Herding can also be seen on the effective potential for the paired particles. Particle locations are show up to time $T = 100$. }
    \end{subfigure}
    \caption{ Periodic diffusion with $N = 5$ and $\varepsilon = 0.05$. At low temperatures, the particle system does not mix well due to lack of mixing between metastable states and herding in the INS Fleming-Viot system.   }
    \label{fig:periodicdiff_loweps_lowN}
\end{figure}

\begin{figure} 
    \centering
    \begin{subfigure}{0.4\textwidth}
    \includegraphics[trim={35pt 11pt 55pt 17pt},clip,width=\linewidth]{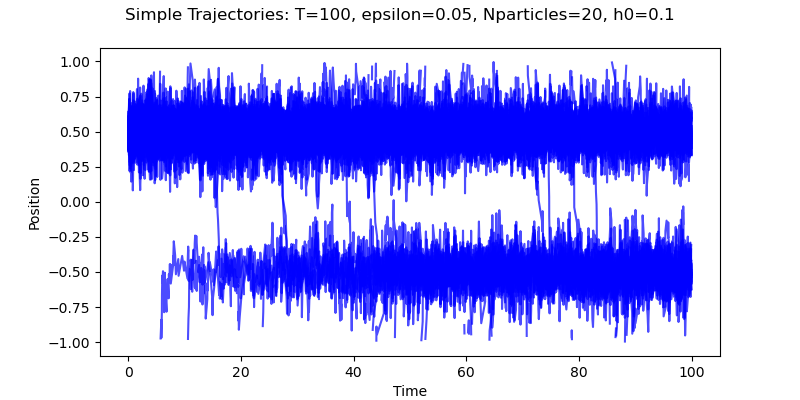}
     \includegraphics[trim={35pt 11pt 55pt 17pt},clip,width=\linewidth]{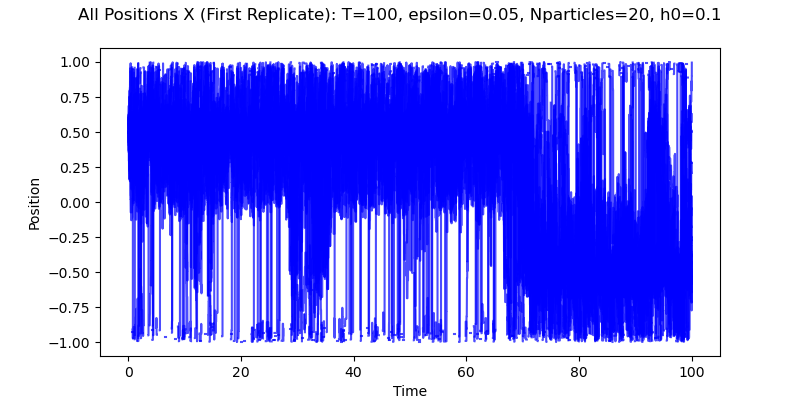}
    \caption{With more particles, independent Langevin trajectories (top) still struggle to move between metastable states, but the INS Fleming-Viot system (bottom), while still exhibiting some herding, explores the state space better.   }
    \end{subfigure}
      \begin{subfigure}{0.58\textwidth}
    \includegraphics[trim={0 25pt 130pt 48pt},clip,width=\linewidth]{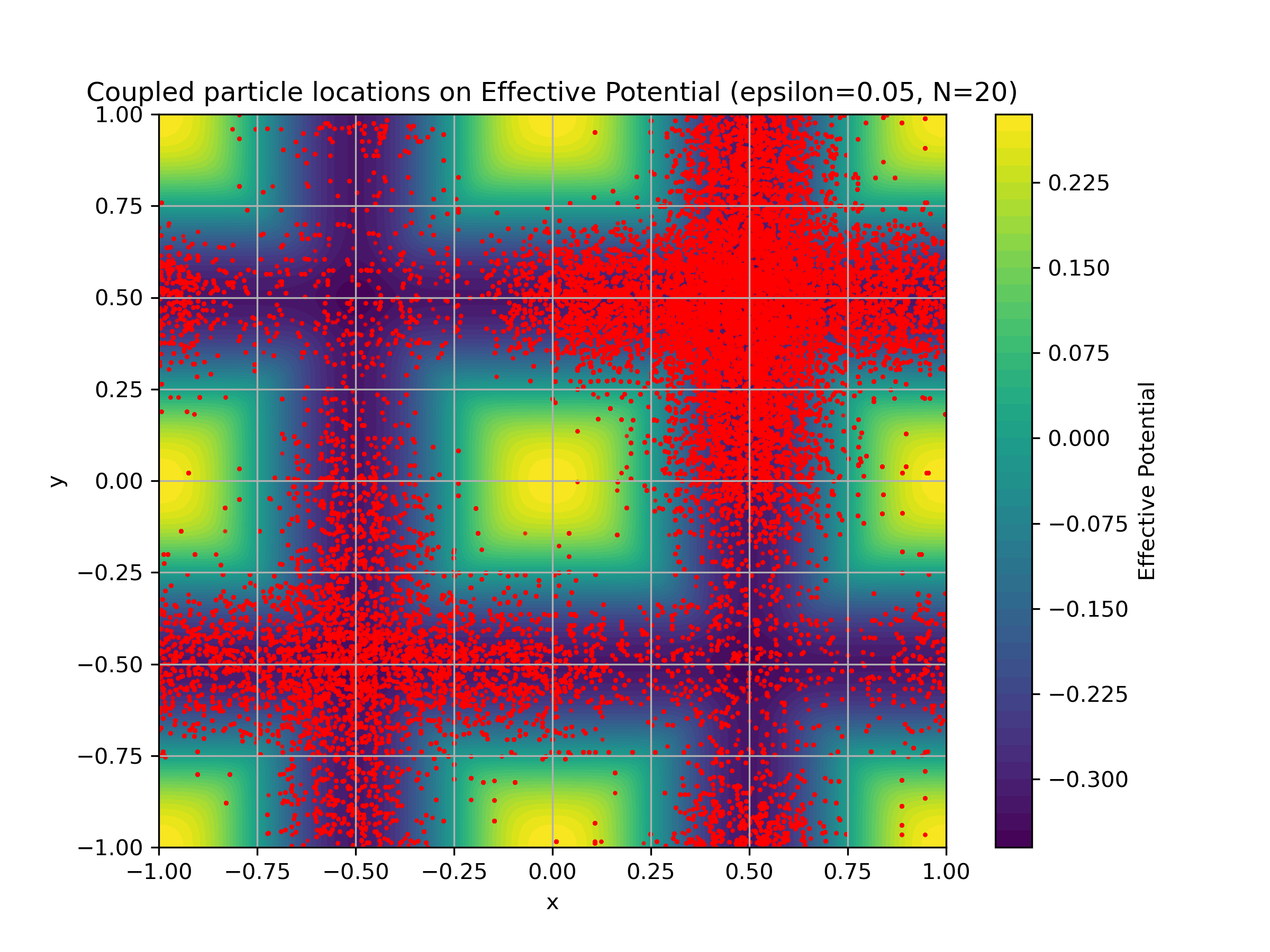}
    \caption{With higher temperatures, herding can still be seen on the effective potential, but the particle system is no longer trapped in one mode.  }
    \end{subfigure}
    \caption{ Periodic diffusion with $N = 20$ and $\varepsilon = 0.05$. With more particles, the INS Fleming-Viot system overcomes herding.    }\label{fig:periodicdiff_loweps_bigN}
\end{figure}

\begin{figure} 
    \centering
    \begin{subfigure}{0.4\textwidth}
    \includegraphics[trim={35pt 11pt 55pt 17pt},clip,width=\linewidth]{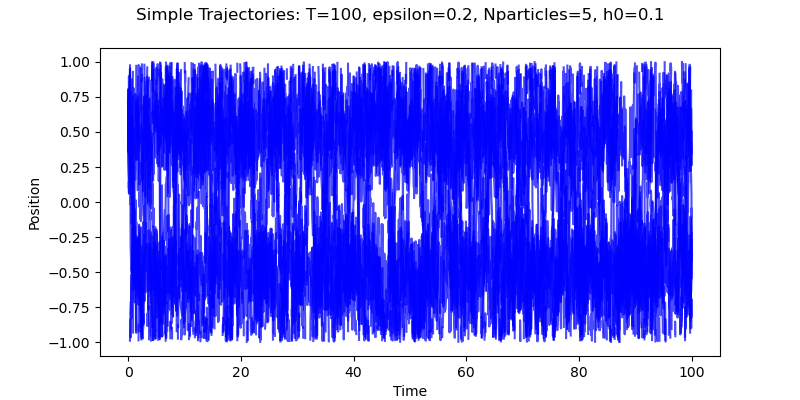}
     \includegraphics[trim={35pt 11pt 55pt 17pt},clip,width=\linewidth]{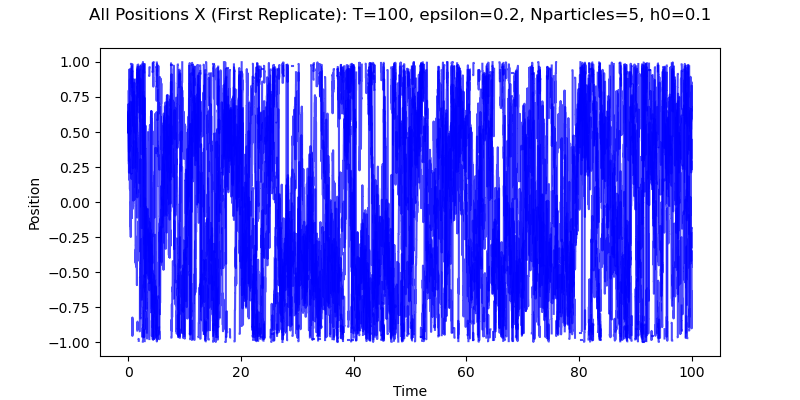}
    \caption{With high temperatures, independent Langevin trajectories (top) can move better between metastable states, but the INS Fleming-Viot system (bottom) explores the state space better.   }
    \end{subfigure}
      \begin{subfigure}{0.58\textwidth}
    \includegraphics[trim={0 25pt 130pt 48pt},clip,width=\linewidth]{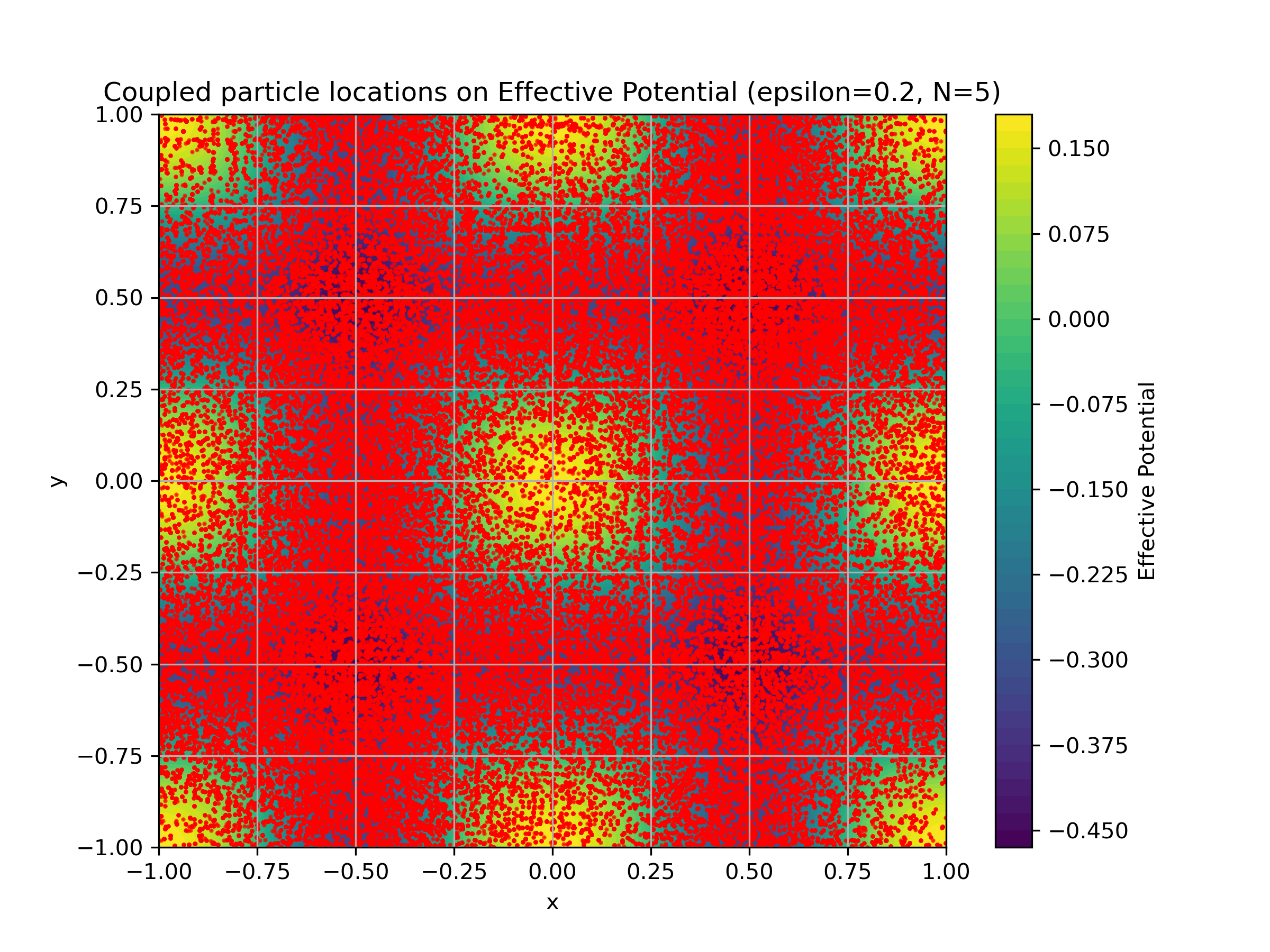}
    \caption{The particle system no longer herds and can effective explore the effective potential.  }
    \end{subfigure}
    \caption{ Periodic diffusion with $N = 5$ and $\varepsilon = 0.2$. With higher temperatures, the INS Fleming-Viot system also overcomes herding.    }\label{fig:periodicdiff_bigeps_lowN}
\end{figure}

\end{example}

\begin{example}[Mixture of Gaussians]

Consider the potential on the periodic domain $[0,4]\times[0,4]$ that consists of an array of individual Gaussians distributions
\begin{align}
    \pi(x_1,x_2) \propto \sum_{n = 1}^K \sum_{m = 1}^K \exp\left(- \frac{(x_1 - n)^2 + (x_2 - m)^2}{2\sigma^2} \right)
\end{align}
for $K = 4$. The Langevin dynamics corresponding to this distribution are 
\begin{align}
    dX(t) = \nabla \log \pi(X(t)) dt + \sqrt{2 \varepsilon} dW(t).
\end{align}
In our numerical examples, we present results for $\varepsilon = 0.4$ and $\sigma = 0.1$. We approximate the INS particle system with $N = 5$ pairs of particles over a time horizon of $T = 100$. The diffusion dynamics are approximated by the pure jump process with a randomized grid where $h$ is iid and distributed according to $\mathcal{U}(0.05,0.15)$. For comparison, we also simulate a standard Fleming-Viot system with $N = 10$ particles over the same time horizon. Note that as there is no killing or cloning, the particle system is equivalent to an ensemble of $N = 10$ independent Langevin trajectories. 

In Figure~\ref{fig: ensemble_comparison} we plot the INS and Fleming-Viot systems up to $T = 25$. Notice that the INS particle system effectively explores the entire state space with this time horizon, while the standard Fleming-Viot system mixes poorly and only discoveres a small number of modes. 

The INS ensemble in Figure~\ref{fig: ensemble_comparison} does not properly represent the empirical measure as the samples are weighted according to the infinite swapping ratio. In Figure~\ref{fig: sample_comparison} we resample from a single trajectory in the INS particle system according to the empirical distribution \eqref{eq: empirical_distribution} to better visualize the particle approximation of the target distribution. For comparison, we plot true samples of the array of Gaussians as well. 

\begin{figure}
    \centering
    \begin{subfigure}{0.45\textwidth}
    \includegraphics[trim={0 0 75pt 17pt},clip,width=\linewidth]{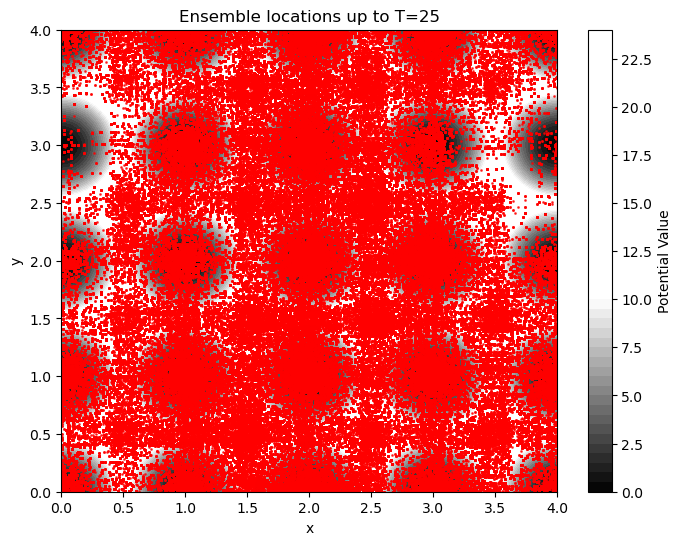}
    \caption{The INS ensemble explores the state space effectively.}
    \end{subfigure}
    ~
    \begin{subfigure}{0.45\textwidth}
    \includegraphics[trim={0 0 75pt 17pt},clip,width=\linewidth]{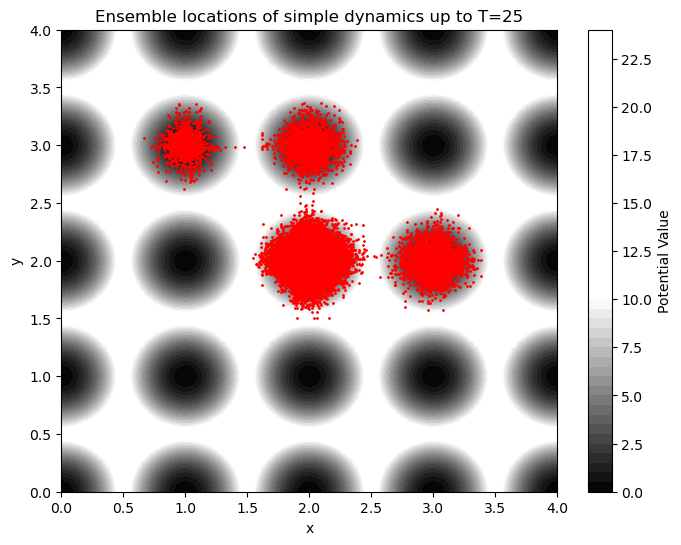}
    \caption{Multimodality is a challenge for an ensemble of independent Langevin trajectories. }
    \end{subfigure}
    \caption{ INS and standard Fleming-Viot particle systems trace plots up to $T = 25$. Each system consists of $N = 5$ particles.   }
    \label{fig: ensemble_comparison}
\end{figure}

\begin{figure}[h]
    \centering
    \begin{subfigure}{0.45\textwidth}
    \includegraphics[trim={0 0 75pt 17pt},clip,width=\linewidth]{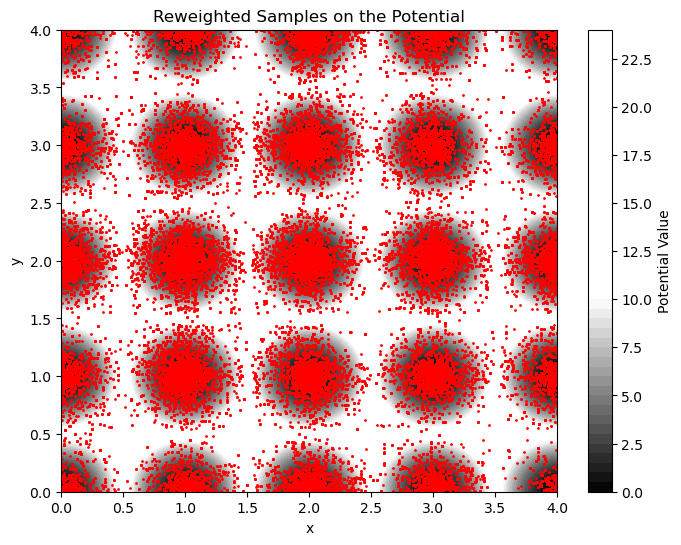}
    \caption{Reweighted ensemble removes particles in low density regions.  }
    \end{subfigure}
    ~
    \begin{subfigure}{0.45\textwidth}
    \includegraphics[trim={0 0 75pt 17pt},clip,width=\linewidth]{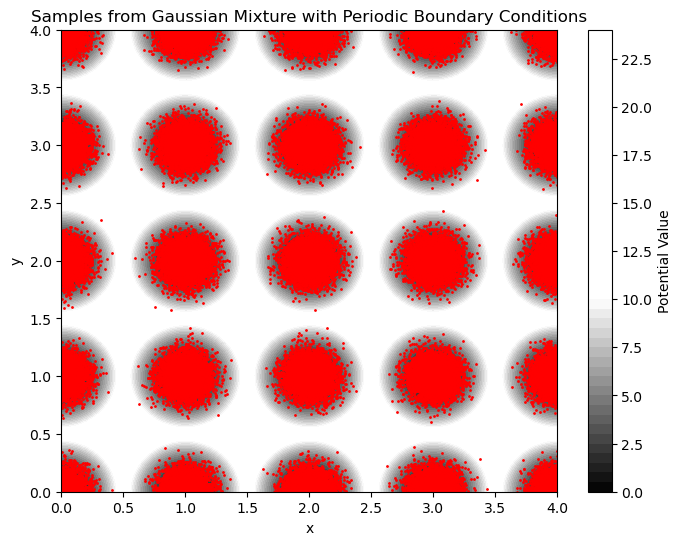}
    \caption{Exact samples from the array of Gaussians. \text{   }}
    \end{subfigure}
    \caption{A single INS trajectory of length $T = 100$ is resampled according to its empirical measure. Resampling removes particles in low density regions. }
    \label{fig: sample_comparison}
\end{figure}

\end{example}



\subsection{Quasistationary distributions}
We consider a variation of the periodic diffusion Example~\ref{ex:periodicdiffusion_theory} where dynamics are extended to the entire real line and there is nontrivial killing function $c(x)$ to prevent the particles from escaping to infinity. We wish to solve the eigenvalue problem 
\begin{align} \label{eq:sincosinf}
    -\sin(2\pi x) D\phi^\varepsilon(x) - \varepsilon D^2 \phi^\varepsilon(x) + c(x) \phi^\varepsilon(x) = \lambda^\varepsilon \phi^\varepsilon(x), \, \text{ in } x \in \R,
\end{align}
where $c(x) = x^2/\pi$. As the true solution is difficult to express analytically, we validate the stochastic representation of the eigenfunction against a finite difference approximation of the eigenvalue problem. 

In our example, we choose $\varepsilon = 0.125$. We choose to discretize the eigenvalue problem over the interval $x \in [-4,4]$ with grid size $\Delta x = 0.008$. We find that the eigenvalue approximated with the finite difference scheme is $\lambda^\varepsilon = 0.143$. We plot the eigenfunction in Figure~\ref{fig:fdsol}. The eigenfunction is normalized to integrate to one. As the eigenfunction is expected to be a density function (the quasistationary distribution), we consider the eigenvalue problem restricted to a domain large enough to capture the bulk of the QSD. 

\begin{figure}[h]
    \centering
\includegraphics[width=0.5\linewidth]{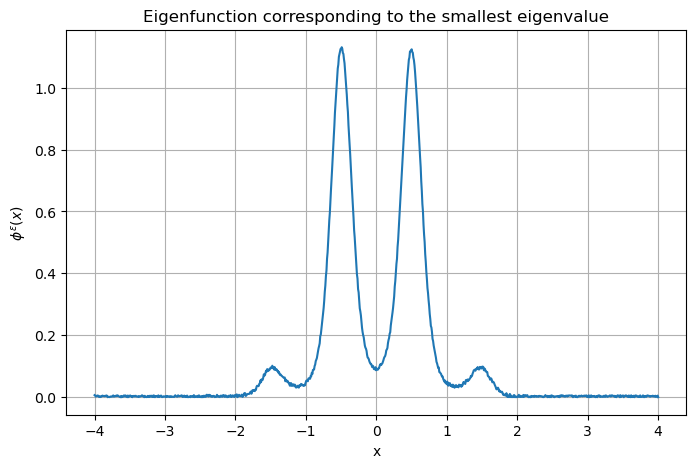}
    \caption{Eigenfunction for eigenvalue problem \eqref{eq:sincosinf}. The eigenvalue is approximately $\lambda^\varepsilon = 0.143$. }
    \label{fig:fdsol}
\end{figure}

For the FV and INS particle systems, we choose $N = 50$ particles, average SDE jump size $h_0 = 0.1$. The first 10 seconds of the simulation is discarded as burn-in. In Figure~\ref{fig: qsd_ensemble_comparison}, we show and compare trace plots of the INS and Fleming-Viot particle systems. Once again, observe that in low noise regimes, the vanilla FV system do not mix well, while the INS system is able to effectively explore the state space.

\begin{figure}
    \centering
    \begin{subfigure}{0.45\textwidth}

    \includegraphics[width = \textwidth]{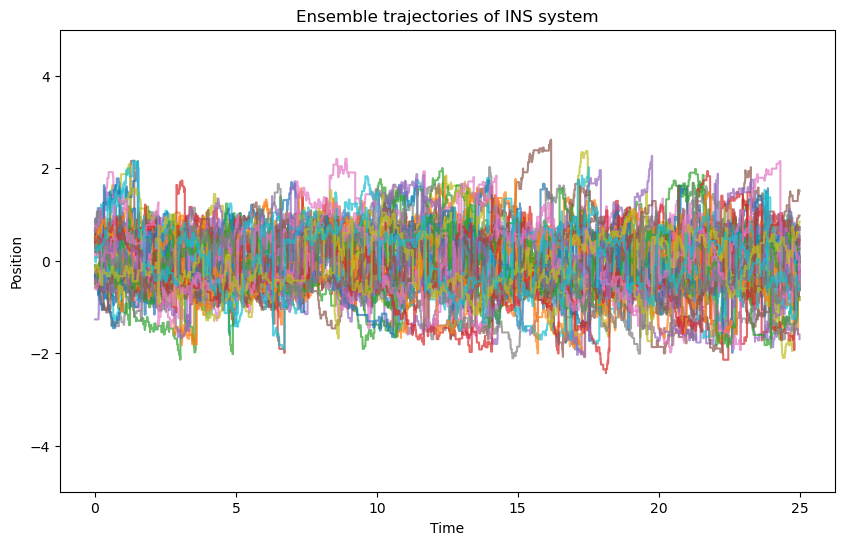}
    \caption{The INS ensemble explores the state space effectively.}
    \end{subfigure}
    ~
    \begin{subfigure}{0.45\textwidth}
    \includegraphics[width = \textwidth]{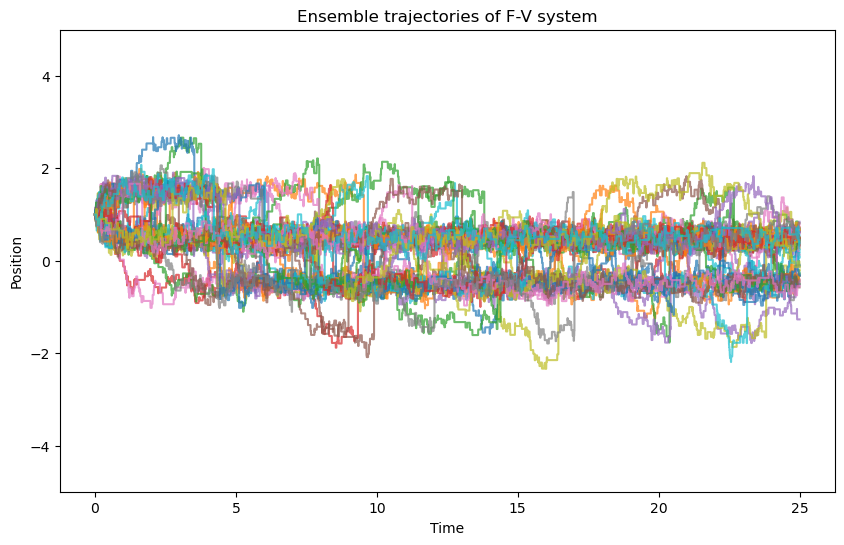}
    \caption{Multimodality is a challenge for standard Fleming-Viot ensembles. Individual trajectories can remain in modes for long periods of time. }
    \end{subfigure}
    \caption{ INS and standard Fleming-Viot particle systems trace plots up to $T = 25$ for the eigenvalue problem in \eqref{eq:sincosinf}. Each system consists of $N = 50$ particles.   }
    \label{fig: qsd_ensemble_comparison}
\end{figure}

Both particles estimate the true eigenvalue decently well. In Figure~\ref{fig:eigvalmse}, we plot the mean-squared error of the two particle systems and show that the INS system has a lower MSE than the simple FV system, but the improvement is relatively marginal.  
\begin{figure}
    \centering
    \includegraphics[width=0.5\linewidth]{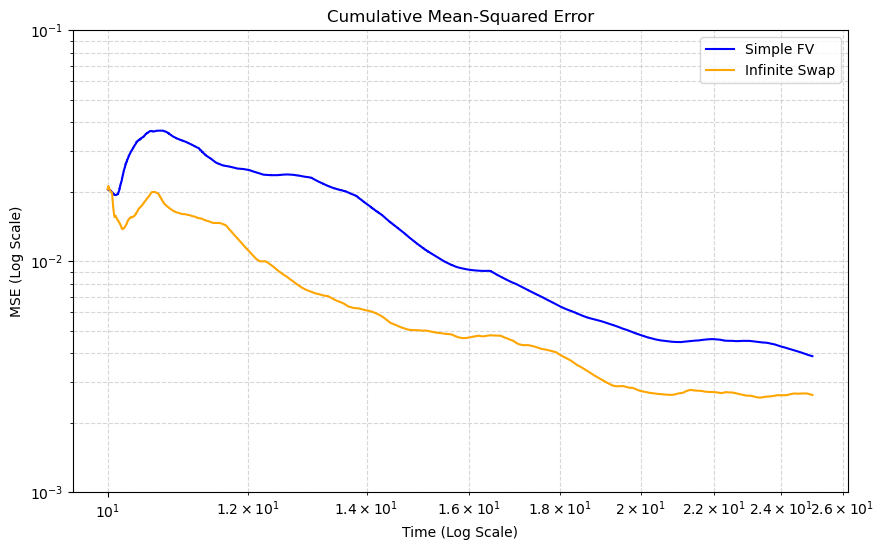}
    \caption{Cumulative mean-squared error as a function of time for estimating the eigenvalue $\lambda^\varepsilon$ using the FV and INS systems. }
    \label{fig:eigvalmse}
\end{figure}

The INS system, however, produces a better representation of the true quasistationary distribution. In Figure~\ref{fig: qsdeig}, we plot the resulting histograms of the INS and standard Fleming-Viot system that approximate the quasi-stationary distribution. We overlay the finite difference approximation of the true eigenfunction on the histograms. Observe that the INS histogram more accurately approximates the true QSD.  

\begin{figure}
    \centering
    \begin{subfigure}{0.45\textwidth}
    \includegraphics[width = \textwidth]{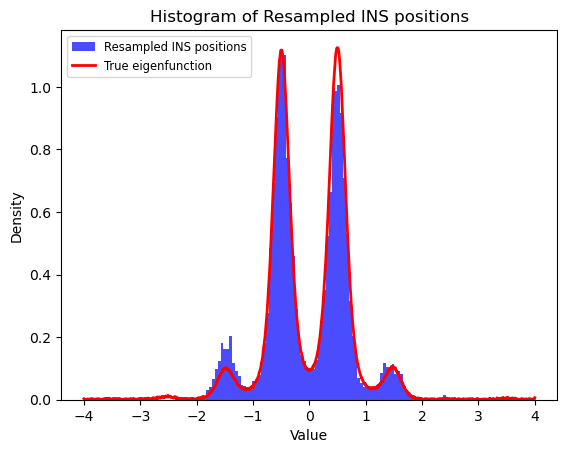}
    \caption{The INS system captures the true quasistationary distribution more accurately. }
    \end{subfigure}
    ~
    \begin{subfigure}{0.45\textwidth}
    \includegraphics[width = \textwidth]{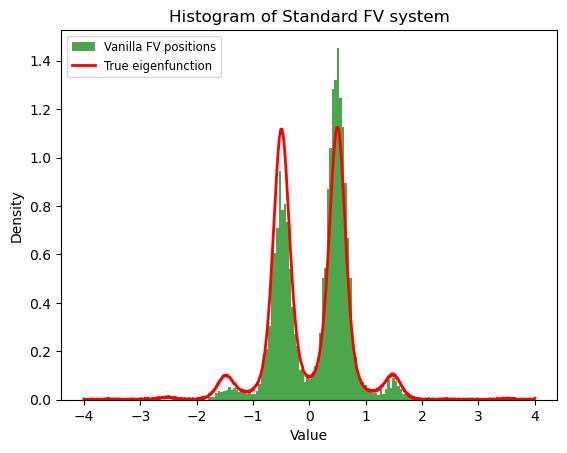}
    \caption{Multimodality is a challenge for an ensemble of independent Langevin trajectories. }
    \end{subfigure}
    \caption{ INS and standard Fleming-Viot particle systems approximation of the quasistationary distributions.  }
    \label{fig: qsdeig}
\end{figure}

\section{Concluding remarks}

In this final section we briefly mention issues in need of further attention.

\subsection{Allowing more than one killing/cloning event at a time}
The need to detect each killing/cloning event for the INS model,
or killing/cloning and swapping events for the non-INS model,
means that the diffusion dynamics cannot be simulated in parallel.
This slows the evolution of the simulated trajectories considerably.
An alternative for the INS model is to fix a deterministic time step,
simulate all diffusion dynamics out to that time,
identify all particles that would be killed/cloned in the interval,
and do an approximate resampling.
For the non-INS model one would also identify particles that would be swapped during the interval. 
It would seem that some modification of the algorithm along these lines will be needed for larger problems than those we have considered so far.

\subsection{Hard killing}
In the case of ``hard killing'' the forward process is killed as soon as it exits an open set $D$. 
This corresponds to a zero value Dirichlet boundary condition for the eigenfunctions,
with no zeroth order term for the one characterizing the QSD. 
As noted in Remark \ref{rem:jump}, 
the use of a Markov chain approximation for the diffusion dynamics has some advantages over more traditional approximations, 
both in terms of deciding when the process has exited $D$ and identifying the discretized boundary condition, 
which simply becomes an easily computed zeroth order term added to all grid points that can reach the complement of $D$ in one jump. 
The corresponding ergodic control problems attempt to confine the process to $D$ with least cost (sometimes called a ``state space constraint'' control problem).
Also,
one can consider combined soft and hard killing.

\subsection{Other swapping moves}
Once the reversibility structure needed for swapping between the forward and backward systems is identified,
one can consider any of the standard parameterizations used to identify other dynamics to use 
for accelerating convergence,
such as temperature.
The only caveat is that one must swap forward/backward pairs at different temperatures, 
rather than individual particles.

A second parameter one might consider that would play a role somewhat analogous to temperature is the discretization parameter $h$ used for the Markov chain approximation. 
If one thinks of the finest discretization as corresponding to the lowest temperature,
it is natural to couple with pairs that use coarser discretizations.
One expects the more coarsely discretized chain, since it uses larger (non-grid based) moves,
to move about the state space more quickly. 
In fact, what is perhaps most natural since the state spaces of the chains need not be related,
is to use both a coarser discretization and a higher temperature.


\subsection{Theoretical results}
Since the nonlinear Markov processes as well as the approximation by Fleming-Viot processes go beyond the theory developed in \cite{dupliupladol},
it would be of interest to show the optimality of the INS limit through an analogous large deviations analysis. 
In addition, 
obtaining a rate of convergence for the particle approximation 
would also be useful,
especially if it shed some light on the impact of how the number of particles $N$
interacts with the rate of convergence with respect to time.

\subsection{Implied potential for a general model}
The discussion of the implied potential in Example \ref{ex:periodicdiffusion_theory}
should be extended to the general case.
In particular, whether or not the implied potential is in fact always of a ``unimodal'' form before the approximation by a FV system seems an interesting question to resolve. 

\subsection{Generalizations}
Owing to the use of a nonlinear Markov process for the stochastic representation of the eigenvalue problem, it seems necessary to use a particle system to approximate the corresponding forward evolution.
Since one is already using such a particle approximation,
it makes sense to consider diffusion operators that also contain a mean-field dependence,
so long as there is the needed reversibility (this extension was suggested to us by Will Asness).

\appendix

\section{Proofs}

\begin{proof}
[Proof of Theorem \ref{lem:qsd-ef}]For the forward direction we can expand with
respect to $\Delta>0$ and find
\begin{align*}
&  \frac{1}{E_{\psi^{\varepsilon}}e^{-\int_{0}^{\Delta}c(X_{s}^{\varepsilon
})ds}}\cdot E_{\psi^{\varepsilon}}e^{-\int_{0}^{\Delta}c(X_{s}^{\varepsilon
})ds}f(X_{\Delta}^{\varepsilon})\\
&  \quad=\frac{1}{E_{\psi^{\varepsilon}}\left[  1-\Delta c(X_{0}^{\varepsilon
})+o(\Delta)\right]  }E_{\psi^{\varepsilon}}\left[  \left(  1-\Delta
c(X_{0}^{\varepsilon})+o(\Delta)\right)  f(X_{\Delta}^{\varepsilon})\right] \\
&  \quad=E_{\psi^{\varepsilon}}\left[  f(X_{\Delta}^{\varepsilon})\right]
-\Delta E_{\psi^{\varepsilon}}\left[  c(X_{0}^{\varepsilon})f(X_{0}%
^{\varepsilon})\right]  +\Delta E_{\psi^{\varepsilon}}\left[  c(X_{0}%
^{\varepsilon})\right]  E_{\psi^{\varepsilon}}\left[  f(X_{0}^{\varepsilon
})\right]  +o(\Delta).
\end{align*}
Using the definition of $\mathcal{L}^\varepsilon$ and also that of a QSD gives
\begin{align*}
o(\Delta)  &  =E_{\psi^{\varepsilon}}\left[  f(X_{\Delta}^{\varepsilon
})-f(X_{0}^{\varepsilon})\right]  -\Delta E_{\psi^{\varepsilon}}\left[
c(X_{0}^{\varepsilon})f(X_{0}^{\varepsilon})\right]  +\Delta E_{\psi
^{\varepsilon}}\left[  c(X_{0}^{\varepsilon})\right]  E_{\psi^{\varepsilon}%
}\left[  f(X_{0}^{\varepsilon})\right] \\
&  =\Delta\int_{\lbrack0,1)^{d}}\psi^{\varepsilon}(x)\mathcal{L}^{\varepsilon
}f(x)dx-\Delta\int_{\lbrack0,1)^{d}}\psi^{\varepsilon}(x)c(x)f(x)dx+\Delta
\int_{\lbrack0,1)^{d}}\psi^{\varepsilon}(x)c(x)dx\int_{[0,1)^{d}}%
\psi^{\varepsilon}(x)f(x)dx.
\end{align*}
Dividing by $\Delta$, sending $\Delta\rightarrow0$, and then using that $f$ is
arbitrary and $\lambda^{\varepsilon}=\int_{[0,1)^{d}}c(x)\psi^{\varepsilon
}(x)dx$ gives (\ref{eqn:QSD}).

The reverse is more complicated, and uses an an argument from the proof of
\cite[Part 6 of Theorem 2.3]{buddupnyqwu}. Let $\hat{p}_{t}^{\varepsilon
}(x,y)$ denote the Green's function for $\frac{\partial u}{\partial
t}=\mathcal{L}^{\varepsilon}u-cu$, so that
\[
E_{x}e^{-\int_{0}^{t}c(X_{s}^{\varepsilon})ds}f(X_{t}^{\varepsilon}%
)=\int_{[0,1)^{d}}\hat{p}_{t}^{\varepsilon}(x,y)f(y)dy.
\]
Note that $\hat{p}_{t}^{\varepsilon}(x,y)dy$ is a sub-probability. For a
smooth function $h(x,y)$, let $\mathcal{L}_{x}^{\varepsilon}h$ denote the
function obtained by regarding $\mathcal{L}^{\varepsilon}$ as an operator in
the variable $x$, and similarly for $\mathcal{L}_{y}^{\varepsilon,\ast}h$.
Then it follows \cite[page 155]{buddupnyqwu} that
\begin{equation}
\lbrack\mathcal{L}_{x}^{\varepsilon}-c(x)]\hat{p}_{t}^{\varepsilon
}(x,y)=[\mathcal{L}_{y}^{\varepsilon,\ast}-c(y)]\hat{p}_{t}^{\varepsilon
}(x,y). \label{eqn:rev}%
\end{equation}

Next append an absorbing state $\mathfrak{C}$ to $[0,1)^{d}$, and let $\hat
{X}_{t}^{\varepsilon}\in\mathfrak{C}$ for $t\geq\tau_{\mathfrak{C}}$, where
$\tau_{\mathfrak{C}}$ is the killing time. Given $f$ such that $\mathcal{L}%
^{\varepsilon}f$ is bounded and continuous on $[0,1]^{d}$, extend it to
$[0,1]^{d}\cup\mathfrak{C}$ by $f(\mathfrak{C})=0$, so that
\[
E_{x}f(\hat{X}_{t}^{\varepsilon})=\int_{[0,1)^{d}}\hat{p}_{t}^{\varepsilon
}(x,y)f(y)dy.
\]
Then with $\left\langle \psi^{\varepsilon},f\right\rangle \doteq\int
_{\lbrack0,1)^{d}}\psi^{\varepsilon}(x)f(x)dx$
\begin{align*}
E_{\psi^{\varepsilon}}f(\hat{X}_{t}^{\varepsilon})  &  =E_{\psi^{\varepsilon}%
}f(X_{t\wedge\tau_{\mathfrak{C}}}^{\varepsilon})=\left\langle \psi
^{\varepsilon},f\right\rangle +E_{\psi^{\varepsilon}}\left[  \int_{0}%
^{t\wedge\tau_{\mathfrak{C}}}\mathcal{L}^{\varepsilon}f(X_{s}^{\varepsilon
})ds\right] \\
&  =\left\langle \psi^{\varepsilon},f\right\rangle +E_{\psi^{\varepsilon}%
}\left[  \int_{0}^{t}\mathcal{L}^{\varepsilon}f(\hat{X}_{s}^{\varepsilon
})ds\right]
\end{align*}
and therefore
\begin{align*}
E_{\psi^{\varepsilon}}\left[  f(X_{t}^{\varepsilon})1_{\{\tau_{\mathfrak{C}%
}>t\}}\right]   &  =E_{\psi^{\varepsilon}}f(\hat{X}_{t}^{\varepsilon})\\
&  =\left\langle \psi^{\varepsilon},f\right\rangle +\int_{0}^{t}%
\int_{[0,1)^{d}}\int_{[0,1)^{d}}[\mathcal{L}^{\varepsilon}-c(y)]f(y)\hat
{p}_{s}^{\varepsilon}(x,y)dy\psi^{\varepsilon}(x)dxds\\
&  =\left\langle \psi^{\varepsilon},f\right\rangle +\int_{0}^{t}%
\int_{[0,1)^{d}}\int_{[0,1)^{d}}f(y)\left[  \mathcal{L}_{y}^{\varepsilon,\ast
}-c(y)\right]  \hat{p}_{s}^{\varepsilon}(x,y)dy\psi^{\varepsilon}(x)dxds.
\end{align*}
Then using (\ref{eqn:rev})%
\begin{align*}
E_{\psi^{\varepsilon}}\left[  f(X_{t}^{\varepsilon})1_{\{\tau_{\mathfrak{C}%
}>t\}}\right]   &  =\left\langle \psi^{\varepsilon},f\right\rangle +\int
_{0}^{t}\int_{[0,1)^{d}}\int_{[0,1)^{d}}f(y)[\mathcal{L}_{x}^{\varepsilon
,\ast}-c(x)]\hat{p}_{s}^{\varepsilon}(x,y)dy\psi^{\varepsilon}(x)dxds\\
&  =\left\langle \psi^{\varepsilon},f\right\rangle +\int_{0}^{t}%
\int_{[0,1)^{d}}\int_{[0,1)^{d}}f(y)\hat{p}_{s}^{\varepsilon}%
(x,y)dy[\mathcal{L}_{x}^{\varepsilon}-c(x)]\psi^{\varepsilon}(x)dxds\\
&  =\left\langle \psi^{\varepsilon},f\right\rangle -\lambda^{\varepsilon}%
\int_{0}^{t}\int_{[0,1)^{d}}\int_{[0,1)^{d}}f(y)\hat{p}_{s}^{\varepsilon
}(x,y)dy\psi^{\varepsilon}(x)dxds\\
&  =\left\langle \psi^{\varepsilon},f\right\rangle -\lambda^{\varepsilon}%
\int_{0}^{t}E_{\psi^{\varepsilon}}\left[  f(X_{s}^{\varepsilon})1_{\{\tau
_{\mathfrak{C}}>s\}}\right]ds  .
\end{align*}
Hence
\[
E_{\psi^{\varepsilon}}\left[  f(X_{t}^{\varepsilon})1_{\{\tau_{\mathfrak{C}%
}>t\}}\right]  =e^{-\lambda^{\varepsilon}t}\left\langle \psi^{\varepsilon
},f\right\rangle =e^{-\lambda^{\varepsilon}t}\int_{[0,1)^{d}}\psi
^{\varepsilon}(x)f(x)dx,
\]
and $E_{\psi^{\varepsilon}}\left[  1_{\{\tau_{\mathfrak{C}}>t\}}\right]
=e^{-\lambda^{\varepsilon}t}$. We therefore obtain
\[
\int_{\lbrack0,1)^{d}}\psi^{\varepsilon}(x)f(x)dx=\frac{1}{E_{\psi
^{\varepsilon}}\left[  1_{\{\tau_{\mathfrak{C}}>t\}}\right]  }\cdot
E_{\psi^{\varepsilon}}\left[  f(X_{t}^{\varepsilon})1_{\{\tau_{\mathfrak{C}%
}>t\}}\right]
\]
for any $t\in(0,\infty)$, and so $\psi^{\varepsilon}(x)dx$ is a QSD.
\end{proof}

\section{Pseudocode}
\label{sec:app:pseudocode}

\begin{algorithm}
\caption{Compute SDE Transition Rates}
\begin{algorithmic}[1]
\Function{Rates}{$x, b(x), a(x), h$}
\State \textbf{Initialize:} $r_+, r_- \in \R^d$
\For{$i = 1$ to $d$}
    \State $r_{+, i} \gets \frac{1}{2h^2}(h  b_i(x) + a_{ii}(x))$
    \State $r_{+, i} \gets \frac{1}{2h^2}(-h  b_i(x) + a_{ii}(x))$
\EndFor

\If{$\exists i$ such that $r_{+, i} < 0$ or $r_{-, i} < 0$}
    \For{$i = 1$ to $d$}
        \State $r_{+, i} \gets \frac{1}{h^2}(h  \max(b_i(x), 0) + a_{ii}(x)/2)$
        \State $r_{-, i} \gets \frac{1}{h^2}(h  \max(-b_i(x), 0) + a_{ii}(x)/2)$
    \EndFor
\EndIf

\State $r \gets \text{concatenate}(r_+, r_-)$
\State \Return $r$
\EndFunction
\end{algorithmic}
\label{alg:sdetransrate}
\end{algorithm}

\begin{algorithm}
\caption{One Step Transition of SDE Jump Process}
\begin{algorithmic}[1]
\Function{OneStep}{$x,r,h$}

\State $r_{\text{total}} \gets \sum_{i=1}^{2d} r_i$
\State $p \gets r / r_{\text{total}}$ \Comment{Normalize rates to obtain probabilities}
\State $j \gets \text{random sample from } \{1, \dots, 2d\} \text{ with probabilities } p$

\If{$j \leq d$}
    \State $x_{\text{new}, j} \gets x_j + h$
\Else
    \State $x_{\text{new}, j - d} \gets x_{j-d} - h$
\EndIf

\State \Return $x_{\text{new}}$
\EndFunction
\end{algorithmic}
\label{alg:sdetransition}
\end{algorithm}

\begin{algorithm}
\caption{Stable computation of infinite swap rate}
\begin{algorithmic}[1]
\Function{InfSwap}{$x,y,V,\varepsilon$}
\State $\rho_\varepsilon(x,y) \gets \left(\exp\left(\frac{2(V(x) - V(y))}{\varepsilon}\right) + 1\right)^{-1}$
\State \Return $\rho_\varepsilon(x,y)$
\EndFunction
\end{algorithmic}
\label{alg:infswapcomputation}
\end{algorithm}

\begin{algorithm}
\caption{Compute Event Rates for Particle $x$}
\begin{algorithmic}[1]
\Function{EventRates}{$x,\rho_\varepsilon, DV, \Delta V, a, c,h$}

\State $b_{\text{symm}}(x) \gets (1 - 2 \rho_\varepsilon)DV(x)$
\State $r_{dyn} \gets  \textsc{Rates}(x, b_{\text{symm}}(x), a(x), h)$ \Comment{Dynamics rates}

\State $r_{kc} \gets c(x) - (1- \rho_\varepsilon)\Delta V(x)$ \Comment{Kill/Clone rate}
\State $r_{{net}} \gets \sum_{i = 1}^{2d} r_{{dyn}_i} + |r_{kc}|$ \Comment{Net event rate}

\State \Return $r_{\text{net}}, r_{\text{dynamics}}, r_{\text{kill}}$
\EndFunction
\end{algorithmic}
\label{alg:eventrates}
\end{algorithm}

\begin{algorithm}
\caption{Killing-Cloning Event for particle $x_i$}

\begin{algorithmic}[1]
\Function{KillClone}{ $\{x^j \}_{j = 1}^N$, $\{y^j\}_{j = 1}^N$, $\{ \rho_\varepsilon^j\}_{j = 1}^N$, $i$, $c^i_{symm}$ } 

\State $i' \gets$ Random Choice from $\{1, \ldots, N\}$
\If{$i' = i$}
    \State $x^{i} \gets x^{i'}$ \Comment{Do not move particle}
    \State $\mathcal{I} \gets [i']$ \Comment{Index to recompute clock}
\Else
    \If{$c_{symm}^i > 0$} \Comment{Kill and respawn}
        \If{Uniform$(0,1)$$< \rho_\varepsilon^i$} \Comment{Determine forward or backward}
            \If{Uniform$(0,1)$$< \rho_\varepsilon^{i'}$}
                \State $x^i \gets x^{i'}$
            \Else
                \State $x^i \gets y^{i'}$
            \EndIf
        \Else
            \If{Uniform$(0,1)$ $< 1 - \rho_\varepsilon^{i'}$}
                \State $x^i \gets y^{i'}$
            \Else
                \State $x^i \gets x^{i'}$
            \EndIf
        \EndIf
        \State $\mathcal{I}\gets [i']$
    \Else \Comment{Clone and cull}
        \If{Uniform$(0,1)$$< \rho_\varepsilon^i$}
            \If{Uniform$(0,1)$$< \rho_\varepsilon^{i'}$}
                \State $x^{i'}\gets x^i$
            \Else
                \State $y^{i'}\gets x^i$
            \EndIf
        \Else
            \If{Uniform$(0,1)$ $< 1 - \rho_\varepsilon^{i'}$}
                \State $y^{i'}\gets x^i$
            \Else
                \State $x^{i'} \gets x^i$
            \EndIf
        \EndIf
        \State $\mathcal{I} \gets [i, i']$
    \EndIf
\EndIf
\State \Return $\{x^j\}_{j = 1}^N, \{y^j\}_{j = 1}^N, \mathcal{I}$
\EndFunction
\end{algorithmic}
\label{alg:killclone}
\end{algorithm}

\begin{algorithm}
\caption{Backward-Forward Infinite Swapping Fleming-Viot}
\begin{algorithmic}[1]
\Function{InfSwapFlemingViot}{ $T$, $V$, $DV$, $\Delta V$, $a$, $\varepsilon$, $c$, $h$, $\{x_0^{(j)}\}_{j = 1}^N$, $\{y_0^{(j)}\}_{j = 1}^N$ }

\State $t_{now} \gets 0$, $t_{\text{all}} = \{t_{\text{now} }\}$,  $m \gets 0$
\State $\mathcal{I} = \{1,\ldots, N \}$


\While{$t_{\text{now}} < T$}
    \For{$i$ in $\mathcal{I}$}
    \State $\rho_\varepsilon^i \gets$ \textsc{InfSwap}($x_m^{(i)}$, $y_m^{(i)}$, $V$, $\varepsilon$)
    \State $r^{x,i}_{\text{net}}, r^{x,i}_{\text{dyn}}, r^{x,i}_{\text{kill}}\gets$ \textsc{EventRates}($x^{(i)}_m$, $\rho_\varepsilon^{(i)}$, $DV$, $\Delta V$, $a$, $c$, $h$)
    \State $r^{y,i}_{\text{net}}, r^{y,i}_{\text{dyn}}, r^{y,i}_{\text{kill}}\gets$ \textsc{EventRates}($y^{(i)}_m$, $1-\rho_\varepsilon^{(i)}$, $DV$, $\Delta V$, $a$, $c$, $h$)
    \State $\tau^{x,i} \gets$ Exponential$(1/r^{x,i}_{\text{net}})$
    \State $\tau^{y,i} \gets$ Exponential$(1/r^{y,i}_{\text{net}})$
\EndFor

    \State $i' \gets \arg\min(\{\tau^{x,j}\}_{j = 1}^N, \{\tau^{y,j}\}_{j = 1}^N)$,  $t_\Delta \gets \min(\{\tau^{x,j}\}_{j = 1}^N, \{\tau^{y,j}\}_{j = 1}^N)$
    \State $t_{\text{now}} \gets t_{\text{now}} + t_\Delta$
    \State $t_{\text{all}} \gets t_{\text{all}} \cup t_{\text{now}} $
    
    \Comment{Update residual times}
    \For{$i = 1$ to $N$}
        \State $\tau^{x,i} \gets \tau^{x,i} - t_\Delta$, $\tau^{y,i} \gets \tau^{y,i} - t_\Delta$
    \EndFor

    \Comment{Determine if x or y event}
    \If{$i' < N$}
        \State $p_1 \gets \{x_{m}^{(j)}\}_{j = 1}^N$, $p_2 \gets \{y_{m}^{(j)}\}_{j = 1}^N$
        \State $r_{\text{dyn}} \gets r_{\text{dyn}}^{x,i'}$,\, $r_{\text{kill}} \gets r_{\text{kill}}^{x,i'}$,\, $r_{\text{net}} \gets r^{x,i'}_{\text{net}}$
        \State $\rho_1 \gets \{\rho_\varepsilon^{j}\}_{j = 1}^N$
    \Else
        \State $p_1 \gets \{y_{m}^{(j)}\}_{j = 1}^N$, $p_2 \gets \{x_{m}^{(j)}\}_{j = 1}^N$
        \State $r_{\text{dyn}} \gets r_{\text{dyn}}^{y,i'}$,\, $r_{\text{kill}} \gets r_{\text{kill}}^{y,i'}$,\, $r_{\text{net}} \gets r^{y,i'}_{\text{net}}$
        \State $\rho_1 \gets \{1-\rho_\varepsilon^{j}\}_{j = 1}^N$
    \EndIf

    \State $i \gets i' \mod N$

    \Comment{Determine event type}
    \If{Uniform$(0,1)$ $< \sum r_{\text{dyn}}/r_{\text{net}}$}
        \State $p_1^{i} \gets$ \textsc{OneStep}($p_1^{i}$, $r_{\text{dyn}}$, $h$)
        \State $\mathcal{I} \gets [i]$
    \Else
        \State $p_1, p_2, \mathcal{I} \gets$ \textsc{KillClone}($p_1$, $p_2$,  $\rho_1$, $i$,$r_{\text{kill}}^{i}$)
    \EndIf
    \State $m \gets m+1$

    \If{$i' < N$}
        \State $\{ x_m^{(j)} \}_{j = 1}^N \gets p_1$, $\{y_m^{(j)} \}_{j = 1}^N \gets p_2$
    \Else
        \State $\{y_m^{(j)} \}_{j = 1}^N\gets p_1$,  $\{ x_m^{(j)} \}_{j = 1}^N\gets p_1$
    \EndIf

\EndWhile

\State \Return $\{x_m^j \}_{j = 1}^N, \{y_m^j \}_{j = 1}^N, t_{\text{all}}$

\EndFunction

\end{algorithmic}
\label{alg:FVINS}
\end{algorithm}

\newpage

\bibliographystyle{plain}
\bibliography{main}

\begin{thebibliography}{10}

\bibitem{buddupnyqwu}
A.~Budhiraja, P.~Dupuis, P.~Nyquist, and G.-J. Wu.
\newblock Quasistationary distributions and ergodic control problems.
\newblock {\em Stoch. Proc. App.}, 145:143--164, 2022.

\bibitem{du}
Y.~Du.
\newblock {\em Order structure and topological methods in nonlinear partial
  differential equations: {V}ol. 1: {M}aximum principles and applications},
  volume~2.
\newblock World Scientific, 2006.

\bibitem{dupliupladol}
P.~Dupuis, Y.~Liu, N.~Plattner, and J.D. Doll.
\newblock On the infinite swapping limit for parallel tempering.
\newblock {\em SIAM J. Multiscale Model. Simul.}, 10:986--1022, 2012.

\bibitem{dupwu2}
P.~Dupuis and G.-J. Wu.
\newblock Analysis and optimization of certain parallel monte carlo methods in
  the low temperature limit.
\newblock {\em SIAM J. Multiscale Model. and Simul.}, 20:220--249, 2022.

\bibitem{gey}
C.J. Geyer.
\newblock Markov chain {M}onte {C}arlo maximum likelihood.
\newblock In {\em Computing Science and Statistics: Proceedings of the 23rd
  Symposium on the Interface}, pages 156--163, New York, 1991. American
  Statistical Association.

\bibitem{har7}
T.~E. Harris.
\newblock {\em The Theory of Branching Processes}.
\newblock Springer Berlin, Heidelberg, 1963.

\bibitem{burholingmar}
D.~Ingerman K.~Burdzy, R.~Holyst and P.~March.
\newblock Configurational transition in a {F}leming-{V}iot-type model and
  probabilistic interpretation of {L}aplacian eigenfunctions.
\newblock {\em J. Phys. A}, 29:2633–2642, 1996.

\bibitem{kolokol10}
V.N. Kolokoltsov.
\newblock {\em Nonlinear {M}arkov processes and kinetic equations}.
\newblock Cambridge University Press, 2010.

\bibitem{kus77}
H.~J. Kushner.
\newblock {\em Probability Methods for Approximations in Stochastic Control and
  for Elliptic Equations}.
\newblock Academic Press, New York, 1977.

\bibitem{kusdup1}
H.~J. Kushner and P.~Dupuis.
\newblock {\em Numerical Methods for Stochastic Control Problems in Continuous
  Time}.
\newblock Springer-Verlag, New York, 2001.
\newblock Revised Second Edition.

\bibitem{pladoldupliuwangub}
N.~Plattner, J.D. Doll, P.~Dupuis, H.~Wang, Y.~Liu, and J.E. Gubernatis.
\newblock An infinite swapping approach to the rare-event sampling problem.
\newblock {\em J.\ of Chemical Physics}, 135:134111, 2011.

\bibitem{swewan}
R.H. Swendsen and J.S. Wang.
\newblock Replica {M}onte {C}arlo simulation of spin glasses.
\newblock {\em Phys. Rev. Lett.}, 57:2607--2609, 1986.

\bibitem{szn91}
A.S. Sznitman.
\newblock {\em Topics in propagation of chaos}, volume 1464, pages 167--251.
\newblock Springer, Berlin, 1991.

\bibitem{vil2}
{Villemonais, Denis}.
\newblock General approximation method for the distribution of {M}arkov
  processes conditioned not to be killed.
\newblock {\em ESAIM: PS}, 18:441--467, 2014.

\end{thebibliography}

\end{document}